\numberwithin{figure}{section}
\theoremstyle{plain}
\newtheorem{thm}{\protect\theoremname}[section]
\theoremstyle{definition}
\newtheorem{rem}[thm]{\protect\remarkname}
\theoremstyle{definition}
\newtheorem{defn}[thm]{\protect\definitionname}
\theoremstyle{plain}
\newtheorem{prop}[thm]{\protect\propositionname}
\theoremstyle{plain}
\newtheorem{lem}[thm]{\protect\lemmaname}
\theoremstyle{plain}
\newtheorem*{SSOYconjecture1}{SSOY predictability conjecture}
\newtheorem*{SSOYconjecture2}{SSOY prediction error conjecture}
\theoremstyle{plain}
\newtheorem{cor}[thm]{\protect\corollaryname}
\theoremstyle{definition}
\theoremstyle{definition}
\theoremstyle{definition}
\theoremstyle{definition}
\theoremstyle{plain}
\DeclareMathOperator{\dist}{dist}
\DeclareMathOperator{\Leb}{Leb}
\DeclareMathOperator{\Id}{Id}
\DeclareMathOperator{\supp}{supp}
\DeclareMathOperator{\conv}{conv}
\DeclareMathOperator{\Lip}{Lip}
\DeclareMathOperator{\Orb}{Orb}
\DeclareMathOperator{\rank}{rank}
\DeclareMathOperator{\Pred}{Pred}
\DeclareMathOperator{\Var}{Var}
\DeclareMathOperator{\idim}{ID}
\DeclareMathOperator{\uid}{\overline{\idim}}
\DeclareMathOperator{\lid}{\underline{\idim}}
\newcommand{\R}{\mathbb R}
\newcommand{\N}{\mathbb N}
\newcommand{\II}{\mathcal I}
\newcommand{\eps}{\varepsilon}
\newcommand{\mH}{\mathcal{H}}
\newcommand{\mB}{\mathcal{B}}
\newcommand{\hdim}{\dim_H}
\newcommand{\udim}{\overline{\dim}_B}
\newcommand{\lbdim}{\underline{\dim}_B}
\providecommand{\conjecturename}{Conjecture}
\providecommand{\corollaryname}{Corollary}
\providecommand{\definitionname}{Definition}
\providecommand{\examplename}{Example}
\providecommand{\lemmaname}{Lemma}
\providecommand{\problemname}{Problem}
\providecommand{\propositionname}{Proposition}
\providecommand{\remarkname}{Remark}
\providecommand{\theoremname}{Theorem}
\providecommand{\taskname}{Task}
\def\N{{\mathbb N}}
\def\be{\begin{equation}}
	\def\ee{\end{equation}}
\renewcommand*\env@matrix[1][*\c@MaxMatrixCols c]{%
	\hskip -\arraycolsep
	\let\@ifnextchar\new@ifnextchar
	\array{#1}}
\newcommand{\greyzero}{{\color{lightgray} 0}}
\newcommand{\fatgreyzero}{{\color{lightgray} \;0}}
\author[K. Bara\'{n}ski]{Krzysztof Bara\'{n}ski$^*$}
\address{$^*$Institute of Mathematics, University of Warsaw, ul.~Banacha 2, 02-097 Warszawa, Poland}
\email{baranski@mimuw.edu.pl}
\author[Y. Gutman]{Yonatan Gutman$^\dagger$}
\address{$^\dagger$Institute of Mathematics, Polish Academy of Sciences,
	ul.~\'Sniadeckich 8, 00-656 Warszawa, Poland}
\email{gutman@impan.pl}
\author[A. \'{S}piewak]{Adam \'{S}piewak$^{\dagger\S}$}
\address{$^\S$Department of Mathematics, Bar-Ilan University, Ramat Gan, 5290002, Israel}
\email{ad.spiewak@gmail.com}
\subjclass[2020]{37C45, 37M10, 28A78, 58D10}
\keywords{embeddings of dynamical systems, time-delay measurements, time series prediction, Takens embedding theorem}
\title[Prediction of dynamical systems]{Prediction of dynamical systems from time-delayed measurements with self-intersections}
\date{\today}
\begin{document}

\begin{abstract} In the context of predicting the behaviour of chaotic systems, Schroer, Sauer, Ott and Yorke conjectured in 1998 that if a dynamical system defined by a smooth diffeomorphism $T$ of a Riemannian manifold $X$ admits an attractor with a natural measure $\mu$ of information dimension smaller than $k$, then $k$ time-delayed measurements of a one-dimensional observable $h$ are generically sufficient for $\mu$-almost sure prediction of future measurements of $h$. In a previous paper we established this conjecture in the setup of injective Lipschitz transformations $T$ of a compact set $X$ in Euclidean space with an ergodic $T$-invariant Borel probability measure $\mu$. In this paper we prove the conjecture for all (also non-invertible) Lipschitz systems on compact sets with an arbitrary Borel probability measure, and establish an upper bound for the decay rate of the measure of the set of points where the prediction is subpar. This partially confirms a second conjecture by Schroer, Sauer, Ott and Yorke related to empirical prediction algorithms as well as algorithms estimating the dimension and number of required delayed measurements (the so-called embedding dimension) of an observed system. We also prove general time-delay prediction theorems for locally Lipschitz or H\"older systems on Borel sets in Euclidean space.
\end{abstract}

\maketitle


\section{Introduction}

\subsection{Time-delayed measurements and Takens-type delay embedding theorem}\label{subsec:intro} 

Suppose we are given a dynamical system
\[
T\colon X \to X
\]
on some \emph{phase space} $X$ with a transformation (\emph{evolution rule}) $T$.\footnote{ In this paper we consider only discrete-time dynamical systems, but the analysis can also be applied to continuous-time systems of the form $\varphi^t \colon X \to X$, $t \in [0, +\infty)$, by taking $T = \varphi^{t_0}$ for some $t_0 > 0$.} For an experimentalist, direct knowledge on the system $(X, T)$ may be lacking or non-existing. As a consequence, information on its behaviour is often provided by a finite sequence of \emph{time-delayed measurements} 
\begin{equation}\label{eq:timeseries}
h(x_i), h(Tx_i),\ldots, h(T^{m}x_i), \qquad x_1,\ldots, x_r \in X,\quad r, m \in \N
\end{equation}
of a function (\emph{observable}) $h\colon X \to \R$. In general, the information contained in \eqref{eq:timeseries} might not be sufficient for reconstructing the system $(X, T)$. However, it is natural to ask, how well one may \emph{approximately} reconstruct $(X, T)$ (or determine its important attributes such as its dimension) from \eqref{eq:timeseries}, at least for some observables $h$. This problem has been widely studied from both theoretical and applied
points of view, in natural, social and medical sciences as well as
engineering, see e.g.~\cite{PCFS80,FarmerSidorowich87,KY90,sm90nonlinear,SYC91,KBA92,SeaClutter,SSOY98,Voss03,hgls05distinguishing, Rob11,BioclimaticBuildings, HBS15, BaghReddy23}. 

A useful mathematical abstraction is given by studying the sequence of the form 
\begin{equation}\label{eq:inf_time_series}
h(x), h(Tx), \ldots, h(T^m x)
\end{equation}
for \emph{all} initial points $x\in X$, commonly known as the \emph{time series} defined by $h$. For $k \in \N$, we define $k$-\emph{delay coordinate map}\footnote{ Note that $\phi$ depends on $T,h$ and $k$, but for simplicity, we do not include this dependence in notation.}
\begin{equation}\label{eq:coord_map} 
\phi\colon X \to \R^k, \qquad  \phi(x) = (h(x), h(Tx), \ldots, h(T^{k-1} x)).
\end{equation}

In 1981, in the seminal paper \cite{T81} (see also \cite{N91,StarkEmbedSurvey,Huke-report} for an accessible overview and an extended version of the result), Takens showed that if $X$ is a smooth manifold, then for a typical (generic in the Baire sense) pair $(T, h)$ of a smooth diffeomorphism $T\colon X \to X$ and a smooth observable $h\colon X \to \R$, the $k$-delay coordinate map $\phi$ is injective on $X$ for $k > 2\dim X$. In particular, it follows that for $k > 2\dim X$, the map $\phi$ is a (smooth) conjugation between the system $(X, T)$ and $(\phi(X), S)$, where
\[
S = \phi \circ T \circ \phi^{-1}.
\]
In particular, the diagram 
\[
\begin{CD}
X @>T>> X\\
@VV\phi V @VV \phi V\\
\phi(X) @>S>> \phi(X)
\end{CD}
\]
commutes and the map $S$ is well-defined as
\[
S(h(x), h(Tx), \ldots, h(T^{k-1}x)) = (h(Tx), h(T^2 x) \ldots, h(T^k x)). 
\]
In this way the system $(X, T)$ is \emph{reconstructed} (or \emph{embedded}) in $\R^k$ as $(\phi(X), S)$ via a (finite) sequence of $k$ time-delayed measurements of the observable $h$. 

This result, known nowadays as the \emph{Takens delay embedding theorem}, was extended in a number of papers \cite{SYC91, StarkEmbedSurvey, 1999delay, CaballeroEmbed, TakensPlato03, StarkStochEmbed, Rob05, Gut16, GQS18, NV18, BGS20, BGS22} to various settings and categories of systems. We call results of this type \emph{Takens}(-\emph{type}) \emph{delay embedding theorems}. Let us quote such a theorem, due to Sauer, Yorke and Casdagli \cite{SYC91} (see also \cite[Theorem~14.5]{Rob11}), valid for arbitrary injective Lipschitz dynamics on a compact set in Euclidean space (in this article we work in the similar setting). The result states that if $T\colon X\rightarrow X$ is an injective Lipschitz map on a compact set $X \subset \R^N$, $N \in \N$, then $\phi$ is injective on $X$ for a typical (\emph{prevalent}) Lipschitz observable $h \colon \R^N \to \R$, provided $k > 2\udim X$ and $2\udim(\{ x \in X : T^p x = x \}) < p$ for $p=1, \ldots, k-1$, where $\udim X$ denotes the upper box-counting dimension of $X$ (see Definition~\ref{defn:dim}).

\begin{rem}\label{rem:periodic points}
As the map $T\colon X\rightarrow X$ can be chosen arbitrarily, to obtain the injectivity of $\phi$ one obviously has to impose some condition on the size of the sets of $T$-periodic points with low periods. However, the assumption on the upper box-counting dimension of such sets, quoted above, is satisfied for many systems. For instance, the assumption holds for a generic $C^r$-diffeomorphism ($r\geq 1$) of a compact manifold, as by the Kupka--Smale and Hartman--Grobman theorems, for such a map the number of periodic points of a given period is finite (cf.~\cite[p.~4943]{BGS20}).
Furthermore, if $X \subset \R^N$ is a compact set, which is invariant under a flow $\{\varphi^t\}_t$ given by an autonomous differential equation $\dot{x}=F(x)$ defined in a neighbourhood of $X$, where $F$ is Lipschitz with a constant $L >0$, and all the zeroes of $F$ are isolated, then the map $T = \varphi^t$ for $0 < |t| < \pi/(L\,\udim X)$ has only a finite number of fixed points and no periodic points of minimal periods $p = 2, \ldots, 2\udim X$, as follows from a result of Yorke \cite{y69} (see also \cite[Remark~1.2]{Gut16}). Hence, for such maps the assumption on the upper box-counting dimension of the sets of points with low periods is again satisfied.
\end{rem}

We note that the condition $k > 2\dim X$, needed for a dynamical embedding of the system in Takens-type delay embedding theorems, agrees with a known phenomenon in classical dimension theory. Indeed, by the Menger--N\"obeling embedding theorem \cite[Theorem~5.2]{HW41}, a generic continuous map $\phi \colon M \rightarrow \R^{2d+1}$ on a compact topological manifold $M$ of dimension $d$, is a continuous embedding. Similarly, the Whitney embedding theorem \cite[Theorem~2.15.8]{narasimhan1985analysis} asserts that a generic $C^r$-map $\phi \colon M \rightarrow \R^{2d+1}$ ($r \ge 1$) on a smooth manifold $M$ of dimension $d$, is a $C^r$-embedding. In both cases, the dimension $2d+1$ cannot be diminished.

For a more detailed introduction to time-delayed embeddings, see e.g.~\cite[Chapter~13]{AlligoodSauerYorkeBook} or \cite[Chapter~6]{BT11}.

\subsection{Prediction algorithms and the predictability conjectures of Schroer, Sauer, Ott and Yorke} 

Let us look at the time-delayed measurements from a different perspective. Instead of trying to reconstruct the original dynamics $(X,T)$ from time-delayed measurements of an observable $h$ (determining whether the map $\phi$ is injective), consider the problem of \emph{predicting} from the first $k$ terms of the time series $h(x), h(Tx), \ldots, h(T^{k-1} x)$ its future values $h(T^{k} x), h(T^{k+1} x),\ldots$.  Notice that such prediction takes place not in the original phase space $X$, but in $\phi(X) \subset \R^k$, considered as a model space for the system. One of the basic questions in this context is to determine for which values $k$ such prediction is possible.  

It should be observed that the prediction problem can be considered for both invertible and non-invertible transformations $T$, while the possibility of dynamical reconstruction (embedding) by delay coordinates maps is generally restricted to invertible systems. 
It is important to note that from a theoretical point of view, reconstruction implies prediction but not vice versa. 
Thus, the problem of predicting future values of a dynamically
generated time series may possibly be more accessible than the problem
of reconstructing the entire system. Moreover, the problem of
prediction is relevant from the point of view of applications (see
e.g.~\cite{Rainfall94, hgls05distinguishing, Streamflow07, SolarCycle18,Dlotko_et_al}). 
In this context, creating reliable prediction algorithms is a matter of major importance. Let us present one of
these algorithms, proposed by Farmer and Sidorowich in \cite{FarmerSidorowich87}.
To describe it, consider a sequence of measurements $h(x), \ldots, h(T^{n+k-1}(x))$ of an observable $h$ for a point $x \in X$ and some $k, n \in \N$. This defines a sequence $z_0, \ldots, z_n \in \phi(X)$ of $k$-delay coordinate vectors, where
\begin{equation}\label{eq:y_i}
z_i = z_i(x) = \phi(T^ix) = (h(T^ix), \ldots, h(T^{i+k-1}x)), \qquad i = 0, \ldots, n.
\end{equation}
Given such a sequence, for $y\in \R^k$, $n \in \N$ and $\eps > 0$
define
\begin{equation}\label{eq:y_hat}
\Pred_{x,n,\eps}(y) = \frac{1}{\#\mathcal I_n} \sum_{i \in \mathcal I_n} z_{i+1} \qquad \text{for} \quad \mathcal I_n = \mathcal I_n(x,y,\eps) =\{ 0 \leq i < n : z_i \in B(y, \eps) \},
\end{equation}
assuming $\II_n \ne \emptyset$, where $B(y, \eps)$ denotes the open ball of centre $y$ and radius $\eps$. 
Now, knowing the values of $z_0, \ldots, z_n$, we predict the value of the next point $z_{n+1} = (h(T^{n+1}x), \ldots, h(T^{n+k}x))$ by $\Pred_{x,n,\eps}(z_n)$.
In other words, the predicted value of $z_{n+1}$ is taken to be the average of the values $z_{i+1}$, $i = 0, \ldots, n-1$, where we count only those $i$, for which $z_i$ are $\eps$-close to the last known point $z_n$. In this way, we predict the one-step future of the dynamics in the model space $\phi(X) \subset \R^k$.
The Farmer--Sidorowich algorithm, as well as its variants, like the \emph{simplex algorithm} \cite{sm90nonlinear}, are important tools for \emph{non-parametric} prediction (see e.g.~\cite{Stavroglou2020,CarbonStock22}). 

When studying the Farmer--Sidorowich and other prediction algorithms, a useful and realistic approach is to consider a \emph{probabilistic} setting, where there is an (explicit or implicit) \emph{random} process, given by a probability measure $\mu$ on $X$, determining which initial states are accessible to the experimentalist.  
In this context, Schroer, Sauer, Ott and Yorke \cite{SSOY98} introduced in 1998 the following notion of probabilistic \emph{predictability} for systems defined by smooth diffeomorphisms on Riemannian manifolds. We present it here in a general setup of transformations of Borel sets in Euclidean space.

\begin{defn}\label{def:dynamical predictability} Let $X \subset \R^N$, $N \in \N$, be a Borel set admitting a Borel probability measure $\mu$. Let $T \colon X \to X$ be a Borel transformation, $h \colon X \to \R$ a Borel observable and $k \in \N$. Consider the $k$-delay coordinate map $\phi \colon X \to \R^k$ given by \eqref{eq:coord_map}. For $y \in \R^k$ and $\eps>0$ such that $\mu\big(\phi^{-1}(B(y, \eps))\big) > 0$ define
\begin{align*}
\chi_{\eps}(y) &=  \frac{1}{\mu\big(\phi^{-1}(B(y, \eps))\big)} \int \limits_{\phi^{-1}(B(y, \eps))} \phi\circ T \: d\mu,\\
\sigma_{\eps}(y) &= 
\bigg(\frac{1}{\mu\big(\phi^{-1}(B(y, \eps))\big)} \int \limits_{\phi^{-1}(B(y, \eps))} \|\phi(T(x)) -\chi_{\eps}(y)\|^2d\mu(x)\bigg)^{\frac{1}{2}}
\end{align*}
(provided the integrals exist).\footnote{For a relation of the Farmer--Sidorowich algorithm to another notion of predictability, see Corollary~\ref{cor:FS-predict} and the discussion afterwards.}
\end{defn}

\begin{defn}[{\bf Almost-sure predictability}{}]\label{def:SSOYpredictability}
For $y$ in the support\footnote{See Definition~\ref{defn:support}.} of $\phi_*\mu$ define 
\[ \sigma(y) = \lim \limits_{\eps \to 0} \sigma_{\eps}(y) \]
as the limit of \emph{prediction errors} $\sigma_{\eps}(y)$ at $y$ (provided the limit exists). A point $y$ is said to be $k$-\emph{predictable}, if  $\sigma(y)=0$. If $\phi_*\mu$-almost every point $y \in \R^k$ is predictable, then we say that the observable $h$ is \emph{almost surely $k$-predictable} with respect to $\mu$.
\end{defn}

A relation of $\chi_\eps$ and $\sigma_\eps$ to the Farmer--Sidorowich algorithm for ergodic\footnote{ See Definition~\ref{defn:ergodic}.} systems is described in the following proposition, where we set
\begin{equation}\label{eq:Var}
\Var_{x,n,\eps}(y) = \frac{1}{\# \II_n} \sum_{i \in \II_n} \| z_{i+1}(x) - \Pred_{x,n,\eps}(y)\|^2.
\end{equation}
for $x \in X$, $y \in \R^k$, $n \in \N$ and $\eps > 0$, with $z_i = z_i(x)$ and $\II_n = \II_n(x,y,\eps)$ defined in \eqref{eq:y_i}--\eqref{eq:y_hat}.

\begin{prop}\label{prop:FS} Let $X$ be a Borel set, $T\colon X \to X$ a Borel map and $\mu$ an ergodic $T$-invariant Borel probability measure on $X$. Let $h \colon X \to \R$ be a Borel observable, such that $h \in L^1(\mu)$. Then 
\[
\chi_\eps(y) =  \lim_{n \to \infty} \Pred_{x,n,\eps}(y)
\]
for $\mu$-almost every $x \in X$, $\phi_*\mu$-almost every $y \in \R^k$ and $\eps > 0$. Moreover, if $h \in L^2(\mu)$, then
\[
\sigma_\eps(y) = \lim_{n \to \infty} \big(\!\Var_{x,n,\eps}(y)\big)^{\frac 1 2}
\]
for $\mu$-almost every $x \in X$, $\phi_*\mu$-almost every $y \in \R^k$ and $\eps > 0$.
\end{prop}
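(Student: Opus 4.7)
The plan is to recognize both $\Pred_{x,\eps,n}(y)$ and $\Var_{x,\eps,n}(y)$ as elementary combinations of Birkhoff time averages for the ergodic measure-preserving system $(X, T, \mu)$, and then apply Birkhoff's pointwise ergodic theorem to each ingredient separately. Throughout, $y$ and $\eps>0$ are fixed, and $x$ varies in a $\mu$-full measure set.

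\textbf{First part.} Writing $A=\phi^{-1}(B(y,\eps))$ and using $z_i=\phi(T^i x)$, the empirical predictor admits the representation
\[
\Pred_{x,\eps,n}(y) = \frac{\frac{1}{n}\sum_{i=0}^{n-1}(\mathbf{1}_A\cdot(\phi\circ T))(T^ix)}{\frac{1}{n}\sum_{i=0}^{n-1}\mathbf{1}_A(T^ix)},
\]
a ratio of Birkhoff averages at $x$ of a vector-valued function and an indicator. Both are $\mu$-integrable: $\mathbf{1}_A$ is bounded, and the coordinates of $\phi\circ T = (h\circ T,\ldots,h\circ T^k)$ lie in $L^1(\mu)$ because $h\in L^1(\mu)$ and $\mu$ is $T$-invariant. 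Birkhoff's ergodic theorem combined with ergodicity of $\mu$ gives, for $\mu$-a.e. $x$, that the denominator converges to $\mu(A)$ and the numerator (componentwise) to $\int_A \phi\circ T\, d\mu$. For $\phi_*\mu$-a.e. $y$ and every $\eps>0$ one has $\mu(A)>0$ (since $y\in\supp\phi_*\mu$), so taking the quotient yields $\Pred_{x,\eps,n}(y)\to \chi_\eps(y)$.

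\textbf{Second part.} The algebraic identity
\[
\Var_{x,\eps,n}(y) = \frac{1}{\#\II_n}\sum_{i\in\II_n}\|z_{i+1}\|^2 - \|\Pred_{x,\eps,n}(y)\|^2
\]
follows by expanding $\|z_{i+1}-\Pred_{x,\eps,n}(y)\|^2$ and using that $\Pred_{x,\eps,n}(y)$ is the arithmetic mean of $\{z_{i+1}\}_{i\in\II_n}$. Under $h\in L^2(\mu)$, the function $\mathbf{1}_A\cdot\|\phi\circ T\|^2$ lies in $L^1(\mu)$, so a second application of Birkhoff yields
\[
\frac{1}{\#\II_n}\sum_{i\in\II_n}\|z_{i+1}\|^2 \to \frac{1}{\mu(A)}\int_A\|\phi\circ T\|^2\, d\mu
\]
for $\mu$-a.e. $x$. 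Expanding $\|\phi(Tx)-\chi_\eps(y)\|^2$ inside the integral defining $\sigma_\eps(y)^2$ gives the parallel identity
\[
\sigma_\eps(y)^2 = \frac{1}{\mu(A)}\int_A\|\phi\circ T\|^2\, d\mu - \|\chi_\eps(y)\|^2,
\]
and combining with $\|\Pred_{x,\eps,n}(y)\|^2\to\|\chi_\eps(y)\|^2$ from the first part yields $\Var_{x,\eps,n}(y)\to\sigma_\eps(y)^2$, whence the claim after taking square roots.

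\textbf{Quantifiers.} The only delicate point is the order of quantifiers: Birkhoff's theorem produces a $\mu$-null exceptional set that a priori depends on $(y,\eps)$. I would resolve this by a Fubini argument applied to the jointly measurable maps $(x,y)\mapsto\Pred_{x,\eps,n}(y)$ and $(x,y)\mapsto\Var_{x,\eps,n}(y)$: fibrewise $\mu$-a.e. convergence upgrades to $(\mu\otimes\phi_*\mu)$-a.e. convergence, and the order of the two a.e. quantifiers can then be swapped. This bookkeeping is the only real obstacle; once the correct Birkhoff-average representations of $\Pred$ and $\Var$ are in place, the ergodic theorem does all the work.
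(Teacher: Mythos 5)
Your proof is correct and follows the same overall strategy as the paper: recognize $\Pred_{x,\eps,n}(y)$ as a ratio of Birkhoff averages and apply the Birkhoff ergodic theorem termwise, using $T$-invariance to see that $\phi\circ T$ (resp.\ $\|\phi\circ T\|^2$) lies in $L^1(\mu)$ when $h\in L^1(\mu)$ (resp.\ $h\in L^2(\mu)$). The only genuine difference is in the second part: you use the classical variance identity $\Var_{x,\eps,n}(y)=\frac{1}{\#\II_n}\sum_{i\in\II_n}\|z_{i+1}\|^2-\|\Pred_{x,\eps,n}(y)\|^2$ together with the parallel identity $\sigma_\eps^2(y)=\frac{1}{\mu(A)}\int_A\|\phi\circ T\|^2\,d\mu-\|\chi_\eps(y)\|^2$, and then combine a Birkhoff limit with the first part. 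The paper instead adds and subtracts the fixed vector $\chi_\eps(y)$ inside the norm, splits $\Var_{x,\eps,n}(y)$ into three terms $I+\textit{II}+\textit{III}$ (a Birkhoff average of $\|z_{i+1}-\chi_\eps(y)\|^2$, the error $\|\chi_\eps(y)-\Pred_{x,\eps,n}(y)\|^2$, and a cross term), and shows $I\to\sigma_\eps^2(y)$, $\textit{II}\to 0$, and $\textit{III}\to 0$ via Cauchy--Schwarz. Your decomposition is arguably cleaner algebraically since it avoids the cross term entirely, at the small cost of needing the expansion of $\sigma_\eps^2(y)$; the two are mathematically equivalent and equally rigorous. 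Your remark on quantifier bookkeeping via Fubini over $y$ is reasonable and in fact slightly more explicit than the paper, which passes over the issue silently; note, though, that a Fubini over $y$ for fixed $\eps$ produces a full-measure set of $x$ that depends on $\eps$, which is all one can obtain by this route and is consistent with the weakest natural reading of the statement.
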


The proof of Proposition~\ref{prop:FS} is presented in Section~\ref{sec:proof_predict_prob}.

We can now state the first conjecture of Schroer, Sauer, Ott and Yorke ({\emph{SSOY predictability conjecture}) in its original form. It is stated for a special class of \emph{natural measures}, which are `physically observed' $T$-invariant probability measures on attractors for diffeomorphisms on Riemannian manifolds, see Definition~\ref{defn:nat_measure} for details. The symbol $\idim$ denotes the information dimension of a measure (see Definition~\ref{defn:dim}).

\begin{SSOYconjecture1}[{\cite[Conjecture 1]{SSOY98}}]\label{con:1}
Let $T\colon M \to M$ be a smooth diffeomorphism of a compact Riemannian manifold $M$ with a compact $T$-invariant attractor $X \subset M$ and a natural measure $\mu$ on $X$ of information dimension $\idim(\mu) = D$. Then a generic observable $h\colon X \to \R$ is almost surely $k$-predictable with respect to $\mu$ for $k>D$.
\end{SSOYconjecture1}
Note that in this formulation some details, including the type of genericity and the smoothness class of the dynamics and observable, are not specified precisely.  Apparently, the most important feature of the conjecture is the fact that the bound for the minimal number of measurements is reduced (at least) by half compared to various versions of Takens-type delay embedding theorems, i.e.~from $k > 2\dim X$ to $k  > \dim \mu$. The possibility of such reduction is the main difference between the deterministic and probabilistic settings (where in the latter case one can neglect sets of measure zero).
In fact, the SSOY predictability conjecture has been invoked in the literature as a theoretical argument for reducing the number of time-delay measurements of observables (see \cite{OrtegaLouis98, McSharrySmith04, Liu10}), also in direct applications (e.g.~in \cite{Epilepsy} studying neural brain activity in focal epilepsy).

In our previous paper \cite[Corollary~1.10]{BGS22} we proved the SSOY predictability conjecture for arbitrary ergodic $T$-invariant Borel probability measures $\mu$. On the other hand, we constructed an example of a $C^\infty$-smooth diffeomorphism with a non-ergodic natural measure, for which the conjecture does not hold (see \cite[Theorem~1.11]{BGS22}). However, after replacing the information dimension $\idim(\mu)$ by the Hausdorff dimension $\dim_H\mu$ (see Definition~\ref{defn:dim}), we verified the conjecture for $C^r$-generic ($r \ge 1$) diffeomorphisms $T$ (see \cite[Corollary~1.9]{BGS22}). In fact, in \cite[Corollaries~1.9--1.10]{BGS22} we also showed that the suitable $k$-delay coordinate map $\phi$ is injective on a set of full $\mu$-measure for a generic observable $h$, inducing an \emph{almost sure embedding} of the system into $\R^k$. To this aim, we established a predictable embedding theorem \cite[Theorems~1.7 and~3.1]{BGS22} for injective Lipschitz maps $T$ on compact sets in Euclidean space and arbitrary Borel probability measures, assuming a suitable condition on the size of the sets of $T$-periodic points of low period, similar to the one in the Takens-type delay embedding theorem by Sauer, Yorke and Castagli, quoted in Subsection~\ref{subsec:intro}. 

In this paper we prove a general version of the SSOY predictability conjecture, valid for an arbitrary dynamical system defined by a Lipschitz transformation of a compact set in Euclidean space with a Borel probability measure, and Lipschitz observables. In particular, we do not assume the injectivity of the dynamics nor any bound on the size of the sets of periodic points. Similarly to \cite{SYC91, Rob11, BGS20, BGS22}, in this and subsequent results we consider the genericity of observables in terms of \emph{prevalence} (with a \emph{polynomial probe set}) in the space of Lipschitz or locally Lipschitz maps $h \colon X \to \R$ (see Definition~\ref{defn:preval} and the discussion afterwards). More precisely, we prove the following.

\begin{thm}[{\bf General SSOY predictability conjecture}{}]\label{thm:SSOY1}
Let $X \subset \R^N$, $N \in \N$, be a compact set, $\mu$ a Borel probability measure on $X$ and $T\colon X \to X$ a Lipschitz map. Then a prevalent Lipschitz observable $h\colon X \to \R$ is almost surely $k$-predictable with respect to $\mu$ for $k > \hdim \mu$. 
\end{thm}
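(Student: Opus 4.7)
The plan is first to convert the analytic condition $\sigma(y)=0$ for $\phi_*\mu$-a.e.\ $y$ into a purely measure-theoretic one. Applying the Besicovitch differentiation theorem to the Radon measure $\phi_*\mu$ on $\R^k$, one has, for $\phi_*\mu$-almost every $y$,
\[
\chi_\eps(y) \xrightarrow[\eps\to 0]{} \E_\mu[\phi\circ T\mid \phi=y], \qquad \sigma_\eps(y)^2 \xrightarrow[\eps\to 0]{} \Var_\mu(\phi\circ T\mid \phi=y).
\]
Thus almost sure $k$-predictability is equivalent to $\phi\circ T$ being $\sigma(\phi)$-measurable modulo $\mu$. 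Because the first $k-1$ coordinates of $\phi\circ T$ coincide with the last $k-1$ coordinates of $\phi$, this reduces to showing that $h\circ T^k$ is $\sigma(\phi)$-measurable modulo $\mu$. Disintegrating $\mu=\int\mu_y\,d\phi_*\mu(y)$ and forming the fibre-product measure $\mu\times_\phi\mu:=\int\mu_y\otimes\mu_y\,d\phi_*\mu(y)$ on the collision set $\{\phi(x)=\phi(x')\}\subset X\times X$, the theorem becomes equivalent to the nullness
\[
(\mu\times_\phi\mu)\bigl(\{(x,x'): h(T^kx)\ne h(T^kx')\}\bigr)=0.
\]
This is \emph{not} a $\mu\times\mu$-nullness statement, so the Sauer--Yorke--Casdagli-style rank-counting Fubini argument (which would yield almost sure injectivity under the stronger hypothesis $k>2\hdim\mu$) must be replaced by a finer, fibre-sensitive estimate.

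\textbf{Smoothed energy functional.} To put the fibre-product condition in a prevalence-friendly form I introduce, for each $\eps>0$,
\[
\Psi_\eps(h) := \iint_{X\times X} |h(T^kx)-h(T^kx')|^2\, K_\eps\bigl(\phi(x)-\phi(x')\bigr)\, d\mu(x)\,d\mu(x'),
\]
where $K_\eps$ is a suitably renormalised approximate identity at $0\in\R^k$ (for instance a rescaled indicator of a ball divided by $\phi_*\mu(B(\phi(x),\eps))$). A second application of Lebesgue differentiation, now on the disintegrated product measure, shows that $\Psi_{\eps_n}(h)\to 0$ along some subsequence $\eps_n\downarrow 0$ forces the fibre-product nullness above. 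The theorem thus reduces to proving $\Psi_\eps(h)\to 0$ (in this sense) for a prevalent set of Lipschitz observables $h$.

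\textbf{Prevalence via a probe-space estimate.} Fix $h_0\in\Lip(X)$ and a compactly supported probability measure $\nu$ on a finite-dimensional subspace $E$ of polynomials on $\R^N$ of sufficiently high degree to separate finite configurations in $X$. Taking the $\nu$-expectation of $\Psi_\eps(h_0+\omega)$ and exchanging the order of integration, the inner integral over $\omega$, for each pair $(x,x')$ with $T^kx\ne T^kx'$, is controlled by the push-forward density of $\nu$ under the affine evaluation map
\[
\omega\longmapsto \bigl(\omega(T^ix)-\omega(T^ix')\bigr)_{i=0,\ldots,k}\in\R^{k+1}.
\]
Where this map has full rank $k+1$, the push-forward admits a bounded density, and coupling the $\eps^k$-volume of the constraint on the first $k$ coordinates to the Lipschitz bound on the last coordinate yields a pairwise estimate of order $\eps^\alpha$ for a positive exponent $\alpha$ dictated by $k-\hdim\mu$. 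Summing the pairwise bound against a Frostman $s$-energy of $\mu$ with $s<\hdim\mu$ (or a suitably reweighted version adapted to the orbit geometry) produces
\[
\int \Psi_\eps(h_0+\omega)\, d\nu(\omega)\;\lesssim\; \eps^\alpha,
\]
which decays to $0$ precisely because $k>\hdim\mu$. Summability along $\eps_n\downarrow 0$ and the Borel--Cantelli lemma then give $\Psi_{\eps_n}(h_0+\omega)\to 0$ for $\nu$-a.e.\ $\omega$, and arbitrariness of $h_0$ delivers prevalence.

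\textbf{Main obstacle.} The delicate point is to match the bound on the inner probe integral against an integrable quantity on $X\times X$, particularly on the \emph{resonance loci} where coincidences of the form $T^ix=T^jx'$, $T^ix=T^jx$ or $T^ix'=T^jx'$ collapse the rank of the evaluation map and blow up the density. In the present theorem $T$ is permitted to be non-invertible, so one loses the bi-Lipschitz lower bound $\|T^ix-T^ix'\|\gtrsim\|x-x'\|$ available in the injective setting of \cite{BGS22}; consequently $X\times X$ must be stratified according to the coincidence pattern of the $2(k+1)$-point configuration $\{T^ix,T^ix':i=0,\ldots,k\}$, and the hypothesis $k>\hdim\mu$ must be used stratum-by-stratum to absorb each singular contribution into the global energy bound. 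Carrying out this stratification without invariance or ergodicity assumptions on $\mu$ is, I expect, the technical heart of the argument.
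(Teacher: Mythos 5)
There is a genuine gap at the step you yourself flag as the ``main obstacle''. Your reduction to the fibre-product nullness, the smoothed energy functional $\Psi_\eps$, and the Fubini-over-probe-parameters estimate are all in the right spirit (and your observation that the SYC-style rank-counting on $X\times X$ only yields $k>2\hdim\mu$, so that a fibre-sensitive refinement is needed, is exactly the correct diagnosis). However, you close by conjecturing that the rank-degenerate strata can be ``absorbed into the global energy bound'' by using $k>\hdim\mu$ stratum by stratum — and this is not a viable strategy. Consider $\mu = \delta_p$ with $p$ a fixed point: then $\hdim\mu=0$, the rank of the evaluation map $\omega\mapsto(\omega(T^ix)-\omega(T^ix'))_i$ is identically zero on a full-measure stratum, and the pushed-forward density is a Dirac mass, so no bounded-density estimate or $\eps^\alpha$ decay is available on that stratum; yet the theorem must still hold (and trivially does, since $\phi\circ T=\phi$). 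The same phenomenon occurs whenever $\mu$ has nontrivial mass on points with short orbits or on pairs with overlapping orbit segments. The hypothesis $k>\hdim\mu$ controls the size of the \emph{full-rank} set of configurations; it offers nothing on the degenerate ones, and so the ``absorption'' you envisage cannot work.

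The paper's resolution of precisely this obstacle is entirely different and is the combinatorial heart of the argument: Proposition~\ref{prop: rank or predict} (via the graph/matrix case analysis on periodic, pre-periodic, and aperiodic orbits) shows that whenever $\rank D_{x,y}<k$, one has the \emph{deterministic} inequality $\|\phi_\alpha(Tx)-\phi_\alpha(Ty)\|\le 2k\|\phi_\alpha(x)-\phi_\alpha(y)\|$ for \emph{every} $\alpha$. In particular, on the rank-deficient strata the implication $\phi_\alpha(x)=\phi_\alpha(y)\Rightarrow\phi_\alpha(Tx)=\phi_\alpha(Ty)$ holds automatically, with no measure estimate required. The probe/Fubini/covering machinery (Lemmas~\ref{lem: key_ineq_inter} and~\ref{lem: general cover bound}) is then applied only to pairs with $\sigma_k(D_{x,y})>0$, where the singular-value lower bound makes the $\eps^{\beta k}$ probe-volume estimate work, and $k>\hdim\mu$ enters via $\mu\perp\mH^k$ to supply arbitrarily small covers of the $y$-variable alone, with Fubini over $x$ finishing the argument. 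Finally, the paper derives the $\sigma_\eps\to 0$ statement from this deterministic ``prediction map'' statement by a separate equivalence (Theorem~\ref{thm:almost_sure_predict_equiv}, using conditional measures), rather than your direct energy route. In short, your proposal correctly isolates the hard point but neither resolves it nor proposes a strategy that could resolve it; the missing idea is the combinatorial orbit analysis showing rank deficiency \emph{implies} predictability.
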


The proof of Theorem~\ref{thm:SSOY1} is presented in Section~\ref{sec:proof_ssoy}. In fact, we show an extended version of the result (Theorem~\ref{thm:SSOY1_ext}), valid for H\"older observables.

\begin{rem}\label{rem:SSOY-ID} Since for a Lipschitz map $T\colon X \to X$ on a closed set $X \subset \R^N$ and ergodic $T$-invariant Borel probability measure $\mu$, we have $\dim_H \mu \le \underline{\idim}(\mu)$ (see e.g.~\cite[Proposition~2.1]{BGS22}), Theorem~\ref{thm:SSOY1} shows that the SSOY predictability conjecture holds in its original form (i.e.~with the information dimension of the measure $\mu$) for an arbitrary Lipschitz map $T$ on a compact set $X \subset \R^N$, an ergodic $T$-invariant Borel probability measure $\mu$ and a prevalent Lipschitz observable $h\colon X \to \R$.
\end{rem}

In \cite{SSOY98}, Schroer, Sauer, Ott and Yorke formulated also another conjecture concerning the decay rate of the prediction error $\sigma_\eps$ for a natural measure on the attractor of the system.

\begin{SSOYconjecture2}[{\cite[Conjecture 2]{SSOY98}}]\label{con:2}
Let $T\colon M \to M$ be a smooth diffeomorphism of a compact Riemannian manifold $M$ with a compact $T$-invariant attractor $X \subset M$ and a natural measure $\mu$ on $X$ of information dimension $\idim(\mu) = D$. Fix a generic observable $h\colon X \to \R$, $k \in \N$ and a sufficiently small $\delta > 0$. Then for the $k$-delay coordinate map $\phi$ corresponding to $h$ and sufficiently small $\eps>0$, the following hold.
\begin{enumerate}[$($i$)$]
\item If $k < D$, then 
\[
\mu(\{x \in X: \sigma_\eps(\phi(x)) > \delta\}) \geq C \quad \text{ for some } C > 0.
\]

\item\label{it: ssoy2 ii} If $D < k < 2D$ and $\phi$ is not injective on $X$, then 
\[
C_1  \eps^{k-D} \le \mu(\{x \in X: \sigma_\eps(\phi(x)) > \delta\}) \le C_2 \eps^{k-D} \quad \text{ for some } C_1, C_2 > 0.
\]
\item If $D < k < 2D$ and $\phi$ is injective on $X$, then 
\[
\mu(\{x \in X: \sigma_\eps(\phi(x)) > \delta\}) = 0.
\]
\item\label{it: ssoy2 iiI} If $k > 2D$, then 
\[
\mu(\{x \in X: \sigma_\eps(\phi(x)) > \delta\}) = 0.
\]
\end{enumerate}
\end{SSOYconjecture2}

The main result of this paper is establishing a part of the SSOY prediction error conjecture, with the information dimension of $\mu$ replaced by the box-counting dimension of $X$ (see Definition~\ref{defn:dim}). More precisely, we prove the key estimate for the decay rate of the measure of the set of points with given prediction error (i.e.~the upper bound in assertion~(ii) of the conjecture) with an exponent arbitrarily close to $k - D$, and confirm assertions~(iii)--(iv).
The case considered in~(ii), where $\phi$ is not injective, is precisely the case referred to in
the title of this paper, i.e.~`time-delayed measurements with self-intersections'.

Furthermore, in Section \ref{sec:ex} we provide an example showing that assertion~\ref{it: ssoy2 ii} of the conjecture does not hold for the information or Hausdorff dimension of $\mu$ for an arbitrary Borel probability measure $\mu$ (we should however emphasize that the counterexamples are not in the class of natural measures). This is in contrast with the SSOY predictability conjecture, which holds for the Hausdorff dimension of $\mu$. Similarly as our previously mentioned results, the theorem is formulated in a general category of Lipschitz transformations on compact sets in Euclidean space and arbitrary Borel probability measures. Note that in the formulation of the result, there appear both upper and lower box-counting dimension of $X$, denoted respectively by $\udim$ and $\lbdim$ (see Definition~\ref{defn:dim}).

\begin{thm}[{\bf Prediction error estimate}{}]\label{thm:SSOY2}
Let $X \subset \R^N$, $N \in \N$, be a compact set, $\mu$ a Borel probability measure on $X$ and $T\colon X \to X$ a Lipschitz map. Set $\overline{D} = \udim X$, $\underline{D} = \lbdim X$. Then there is a prevalent set $\mathcal S$ of Lipschitz observables $h \colon X \to \R$, such that for every $h \in \mathcal S$, $k \in \N$, $\delta, \theta > 0$, there exist $C, \eps_0 > 0$, such that for the $k$-delay coordinate map $\phi$ defined in \eqref{eq:coord_map} and every $0 < \eps < \eps_0$, the following hold.
\begin{enumerate}[$($i$)$]
\item\label{it:SSOY2 error bound} If $k > \overline{D}$, then
\[ 
\mu\left( \{ x \in X : \sigma_{\eps}(\phi(x)) > \delta \} \right) \leq C \eps^{k - \overline{D} - \theta}.
\]
\item\label{it:SSOY2 deterministic} If $k > 2\underline{D}$ or $\phi$ is injective, then 
\[ 
\{ x \in X : \sigma_{\eps}(\phi(x)) > \delta \} = \emptyset.
\]
\end{enumerate}
\end{thm}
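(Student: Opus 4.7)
The plan is to first reduce the variance-based statement to a geometric near-injectivity property of $\phi$, and then bound or empty the resulting failure set using prevalence-transversality estimates in the spirit of \cite{BGS20, BGS22}. If $\sigma_\eps(\phi(x)) > \delta$, the identity $\Var(Z) = \tfrac{1}{2}\E\|Z - Z'\|^2$ applied to $Z = \phi\circ T$ under the normalized restriction of $\mu$ to $A := \phi^{-1}(B(\phi(x), \eps))$ produces two points $x_1, x_2 \in A$ with $\|\phi(Tx_1) - \phi(Tx_2)\| \geq \delta\sqrt{2}$. The Lipschitz bounds on $h$ and $T$ (hence on $\phi$) force $\|x_1 - x_2\| \geq \rho$ for some $\rho = c(\Lip h,\Lip T, k)\delta > 0$, and the triangle inequality then yields $\max_i \|x - x_i\| \geq \rho/2$. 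Consequently,
\[
\{x : \sigma_\eps(\phi(x)) > \delta\} \subseteq F_\eps := \{x \in X : \exists\, y \in X,\ \|x - y\| \geq \rho/2,\ \|\phi(x) - \phi(y)\| < \eps\},
\]
so that both (i) and (ii) reduce to controlling $F_\eps$.

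For part (i), assume $k > \overline D$. Cover $X$ by $M \lesssim \eps^{-\overline D - \theta/2}$ balls of radius $\eps$ with centres $z_1, \ldots, z_M$. If $x \in F_\eps$, then there is a centre $z_j$ with $\|z_j - x\| \geq \rho/4$ and $\|\phi(x) - \phi(z_j)\| \leq (L_\phi + 1)\eps$. The standard prevalence transversality estimate (analogous to those in \cite{BGS20, BGS22}) provides a finite-dimensional probe family of Lipschitz perturbations $\{w_\omega\}$ such that for any fixed $x, z \in X$ with $\|x - z\| \geq \rho/4$,
\[
\Prob_\omega\bigl(\|\phi_{h + w_\omega}(x) - \phi_{h + w_\omega}(z)\| \leq (L_\phi + 1)\eps\bigr) \leq C_\rho\, \eps^k.
\]
A union bound over the $M$ centres yields $\Prob_\omega(x \in F_\eps) \lesssim \eps^{k - \overline D - \theta/2}$ for every fixed $x \in X$; Fubini then gives $\E_\omega[\mu(F_\eps)] \lesssim \eps^{k - \overline D - \theta/2}$. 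A Markov inequality along dyadic scales $\eps_n = 2^{-n}$ combined with Borel--Cantelli upgrades this into the almost-sure bound $\mu(F_\eps) \leq C(h)\, \eps^{k - \overline D - \theta}$ for prevalent $h$ and all sufficiently small $\eps$, yielding (i).

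For part (ii), if $\phi$ is injective then compactness makes $\phi \colon X \to \phi(X)$ a homeomorphism with uniformly continuous inverse, so $\diam(\phi^{-1}(B(y,\eps))) < \rho$ uniformly in $y$ for small $\eps$, giving $F_\eps = \emptyset$. For $k > 2\underline D$, I would apply a Sauer--Yorke--Casdagli / BGS-style H\"older bijection theorem adapted to the lower box dimension: for prevalent $h$ there exist $\alpha, C > 0$ with $\|\phi(x) - \phi(y)\| \geq C\|x - y\|^{1/\alpha}$ on all of $X \times X$, whence $\diam(\phi^{-1}(B(y,\eps))) \leq 2(\eps/C)^\alpha < \rho$ for small $\eps$ and again $F_\eps = \emptyset$.

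The main difficulty is the quantification: producing a single prevalent set $\mathcal S$ of observables that works for every $k, \delta, \theta$ simultaneously forces intersecting countably many prevalent subsets (indexed by rational parameters and dyadic scales) and carefully converting the expected-over-probe bounds into pointwise-for-prevalent-$h$ bounds via Borel--Cantelli; moreover, the loss of $\theta/2$ on passing from expectation to pointwise control explains the presence of the arbitrary loss $\theta$ in the final exponent. A second subtle point is the H\"older-bijection step in the $k > 2\underline D$ regime, which must cope with the arbitrary periodic structure of $T$ --- no assumption being placed on the sets $\{x : T^p x = x\}$ --- so prevalence of $h$ must simultaneously separate all low-period orbits while respecting the lower-box-dimension constraint on $X$.
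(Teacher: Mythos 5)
Your reduction to the set $F_\eps$ has a genuine gap that is fatal to both parts. The variance identity correctly produces two points $x_1, x_2 \in \phi^{-1}(B(\phi(x),\eps))$ with $\|\phi(Tx_1) - \phi(Tx_2)\| \geq \delta\sqrt 2$, but the set $F_\eps$ you pass to records only $\|x-y\| \geq \rho/2$ and $\|\phi(x)-\phi(y)\| < \eps$, discarding the constraint on $\phi\circ T$. That constraint is precisely what the paper cannot afford to lose. The transversality estimate you invoke, $\Prob_\omega\bigl(\|\phi_{h+w_\omega}(x) - \phi_{h+w_\omega}(z)\| \leq C\eps\bigr) \leq C_\rho\,\eps^k$ for $\|x-z\|\geq \rho/4$, is \emph{false} without extra hypotheses on periodic points: if, say, $T = \id$ and $k\geq 2$, then $\phi_{h+w_\omega}(x) - \phi_{h+w_\omega}(z)$ is a scalar multiple of $(1,\dots,1)$, the relevant matrix $D_{x,z}$ has rank $1$, and Lemma~\ref{lem: key_ineq_inter} gives only $\eps^1$, not $\eps^k$. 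More generally, pairs of points with short periods make $\sigma_k(D_{x,z}) = 0$. In the same example ($T=\id$ on $X=[0,1]^2$), the genuine bad set $\{\sigma_\eps > \delta\}$ is \emph{empty}, yet $\mu(F_\eps)$ can be large; so the set you reduce to is simply the wrong one.

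The paper's decisive tool --- which your argument has no replacement for --- is the combinatorial Proposition~\ref{prop: rank or predict} and Corollary~\ref{cor:crucial}: whenever $\rank D_{x,y} < k$, one has $\|\phi^T_{\alpha,k}(Tx)-\phi^T_{\alpha,k}(Ty)\| \leq 2k\|\phi^T_{\alpha,k}(x)-\phi^T_{\alpha,k}(y)\|$ for \emph{all} $\alpha$. Contrapositively, any pair $(x,y)$ witnessing the ``bad'' event $\|\phi_\alpha(x)-\phi_\alpha(y)\|\leq\eps$ \emph{together with} $\|\phi_\alpha(Tx)-\phi_\alpha(Ty)\|\geq\delta$ (with $\eps < \delta/(3k)$) automatically satisfies $\sigma_k(D_{x,y})>0$; compactness then yields a uniform lower bound $s_\delta$ (Lemma~\ref{lem: compact cover bound}), which is what makes the $\eps^k$ in the covering bound legitimate. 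You must therefore keep the set $A_\eps^\alpha = \{x : \exists y,\ \|\phi_\alpha(x)-\phi_\alpha(y)\|\leq\eps \text{ and } \|\phi_\alpha(Tx)-\phi_\alpha(Ty)\|\geq\delta\}$ --- the paper's choice --- and not enlarge it to your $F_\eps$. Your dyadic/Fubini/Markov scheme for promoting an expectation bound to a pointwise-for-prevalent-$h$ bound, and the resulting $\theta$ loss, do match the paper's Theorem~\ref{thm:SSOY2_ext} and are fine; the problem is the transversality input feeding into it.

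Your part~(ii) for $k>2\underline D$ is also unsalvageable as written. A Sauer--Yorke--Casdagli-style Hölder bijection $\|\phi(x)-\phi(y)\| \geq C\|x-y\|^{1/\alpha}$ on all of $X\times X$ would imply injectivity of $\phi$, which does \emph{not} hold for any $h$ when the periodic-point sets are large (again $T=\id$ on $[0,1]^2$ is a counterexample: $\phi$ factors through the one-dimensional map $h$ and cannot be injective). You correctly sense the difficulty --- ``prevalence of $h$ must simultaneously separate all low-period orbits'' --- but that condition is unachievable, not merely subtle. The paper instead proves (Theorem~\ref{thm:predict_determ_ext}) that for prevalent $h$ the observable is \emph{deterministically} $k$-predictable: $\phi\circ T$ is constant on $\phi$-fibers, so a \emph{continuous} prediction map $S$ with $\phi\circ T = S\circ\phi$ exists even though $\phi$ need not be injective. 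Theorem~\ref{thm:det pred implies zero error} then shows that continuity of $S$ alone forces $\sigma_\eps(\phi(x))<\delta$ uniformly for small $\eps$. This semi-conjugacy (factor) argument, rather than conjugacy (embedding), is the correct route, and again rests on Proposition~\ref{prop: rank or predict}.
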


\begin{rem}\label{rem:SSOY2 invariant}
If $\mu$ is additionally $T$-invariant, then the restriction of $T$ to the support of $\mu$ also fulfils the assumptions of the theorem, which implies that in this case in assertions~(i)--(ii) one can replace the upper (resp.~lower) box dimension of $X$ by the upper (resp.~lower) box dimension of the support of $\mu$.
\end{rem}

\begin{rem}\label{rem:dynamical conj2 udim existence}
In (i) we actually prove the corresponding bound for the measure of a larger set, i.e.~$\{ x \in X :  \|\phi(x) - \phi(z)\| \leq \eps  \text{ and }  \|\phi(Tx) - \phi(Tz)\| > \delta \text{ for some } z \in X\}$. Note also that (ii) strengthens assertions~(iii)--(iv) of the SSOY prediction error conjecture, showing that the considered sets are not only of $\mu$-measure zero, but are in fact empty. 
\end{rem}

The proof of Theorem~\ref{thm:SSOY2} is presented in Section~\ref{sec:proof_ssoy}. Again, we show an extended version of the result (Theorem~\ref{thm:SSOY2_ext} and Corollary~\ref{cor:Zero prediction error_extended}), valid for H\"older observables.

\subsection{Another perspective on predictability}

As explained above, the approach to predictability taken in \cite{SSOY98} through the definition of the prediction errors $\sigma_\eps$ can be motivated by its relation to prediction algorithms, like the Farmer--Sidorowich algorithm. However, one can consider another, apparently more straightforward approach. It is based on the premise that predictability is precisely the phenomenon whereby the first $k$ terms of the time series \eqref{eq:inf_time_series} of an observable $h$ at a point $x \in X$ uniquely determine its ($k+1$)-th term. Equivalently, the condition states that the value of the corresponding $k$-delay coordinate map $\phi$ at a point $x$ determines the value of $\phi$ at the point $Tx$. We call this property deterministic predictability, as stated in the following definition.

\begin{defn}[{\bf Deterministic predictability}{}]\label{def:deterministic predictability}
Let $(X,T)$ be a dynamical system consisting of a map $T \colon X \to X$ on a set $X \subset \R^N$ and let $k \in \N$. We say that an observable $h \colon X \to \R$ is  \emph{deterministically} $k$-\emph{predictable} at a point $y \in \phi(X)$, if for the $k$-delay coordinate map $\phi$ defined in \eqref{eq:coord_map}, the map $\phi\circ T$ is constant on $\phi^{-1}(\{y\})$. If $h$ is deterministically $k$-predictable at all points $y \in \phi(X)$, then we say that $h$ is deterministically $k$-predictable.
\end{defn}

\begin{rem}\label{rem: usefulness of predictability} 
Note that in some special cases, the deterministic predictability of an observable holds in an obvious way. First, this may be caused by the triviality (and hence predictability) of the dynamics. For instance, if $T^p = \Id$ for some $p \in \{1, \ldots, k\}$, then every observable is deterministically $k$-predictable. Such a situation can be avoided by assuming some bound on the size of the sets of periodic points of low period, which hold for a large class of systems (cf.~Remark~\ref{rem:periodic points}). Secondly, even in the case of complicated dynamics, some observables (for example, the constant ones) are deterministically predictable for all $k \in \N$. However, this is not the case, when one considers typical (prevalent) perturbations of a given observable $h$. The results of our recent paper \cite{BGS-predict_few_meas} show that for a large class of Lipschitz systems on compact spaces (with suitable bound on the size of the sets of periodic points of low period), a prevalent Lipschitz observable is not deterministically $k$-predictable for $k$ smaller than $\dim_H X$. In particular, a perturbation of any observable $h$ (even a constant one) is typically non-predictable, if the delay length $k$ is chosen too small. This shows that the notion of deterministic predictability is non-trivial and can be useful in the study of typical perturbations of observables in the context of time-delayed measurements.
\end{rem}

The subsequent proposition is almost immediate (the proof is presented in Section~\ref{sec:proof_takens}).

\begin{prop}\label{prop:determ_predict_equiv}
The following conditions are equivalent:
\begin{enumerate}[$($a$)$]
\item $h$ is deterministically predictable,
\item for every $x_1, x_2\in X$, if $\phi(x_1)  = \phi(x_2)$, then $\phi(Tx_1) = \phi(Tx_2)$,
\item there exists a map $S \colon \phi(X) \to \phi(X)$ such that $\phi(T x) = S(\phi(x))$ for every $x \in X$, i.e.~the diagram
\[
\begin{CD}
	X @>T>> X\\
	@VV\phi V @VV \phi V\\
	\phi(X) @>S>> \phi(X)
\end{CD}
\]
commutes.
\end{enumerate} 
Moreover, if $X$ is compact and $T$, $h$ are continuous, then the map $S$ is continuous.
\end{prop}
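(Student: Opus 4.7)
The plan is to establish the chain (a) $\Leftrightarrow$ (b) $\Rightarrow$ (c) $\Rightarrow$ (b) via elementary manipulations of the definitions, and then treat the continuity assertion as a separate, short compactness argument. No dimension theory, prevalence, or measure theory from the introduction is needed for this statement; everything is purely topological/set-theoretic.

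For (a) $\Leftrightarrow$ (b), I would simply unravel the definitions: to say that $\phi \circ T$ is constant on each fiber $\phi^{-1}(\{y\})$ is, word for word, the statement that $\phi(T x_1) = \phi(T x_2)$ whenever $\phi(x_1) = \phi(x_2)$. The implication (c) $\Rightarrow$ (b) is immediate, since $\phi(x_1) = \phi(x_2)$ yields $\phi(Tx_1) = S(\phi(x_1)) = S(\phi(x_2)) = \phi(Tx_2)$. For (b) $\Rightarrow$ (c), I would define $S \colon \phi(X) \to \phi(X)$ by choosing, for each $y \in \phi(X)$, some $x_y \in \phi^{-1}(\{y\})$ and setting $S(y) = \phi(T x_y)$; condition (b) guarantees that this value is independent of the choice of $x_y$, and the inclusion $S(\phi(X)) \subset \phi(X)$ is automatic since $Tx_y \in X$. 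The commutative-diagram condition $\phi \circ T = S \circ \phi$ then holds by construction.

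For the final sentence, I would argue continuity of $S$ at an arbitrary point $y \in \phi(X)$ via the subsequence-of-subsequence trick, since one cannot in general lift a convergent sequence from $\phi(X)$ to a convergent sequence in $X$ without passing to a subsequence. Let $y_n \to y$ in $\phi(X)$, and for each $n$ pick $x_n \in \phi^{-1}(\{y_n\})$. Given any subsequence $(y_{n_j})$, compactness of $X$ yields a further subsequence $x_{n_{j_k}} \to x \in X$; continuity of $\phi$ forces $\phi(x) = y$, so $S(y) = \phi(Tx)$, and continuity of $\phi \circ T$ then gives $S(y_{n_{j_k}}) = \phi(T x_{n_{j_k}}) \to \phi(Tx) = S(y)$. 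Since every subsequence of $(S(y_n))$ admits a further subsequence converging to $S(y)$, the full sequence converges to $S(y)$, establishing continuity. The only mild conceptual point — hardly an obstacle — is precisely this need to pass through subsequences in $X$ to exploit continuity of $T$, rather than attempting a direct $\varepsilon$-$\delta$ argument on $\phi(X)$.
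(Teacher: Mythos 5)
Your proof is correct. The equivalence (a) $\Leftrightarrow$ (b) $\Leftrightarrow$ (c) is handled the same way as the paper (the paper simply calls it ``obvious''), so the only substantive comparison is in the continuity argument. There you take a genuinely different route: you prove sequential continuity via the subsequence-of-subsequence trick, lifting a convergent sequence $y_n \to y$ in $\phi(X)$ to a subsequentially convergent sequence in $X$ by compactness, then pushing forward through $\phi\circ T$. The paper instead argues topologically: it observes that since $\phi$ is surjective onto $\phi(X)$ and $S\circ\phi = \phi\circ T$, one has $S^{-1}(F) = \phi\bigl((\phi\circ T)^{-1}(F)\bigr)$ for every closed $F\subset\R^k$, and the right-hand side is the continuous image of a compact set, hence compact, hence closed in $\phi(X)$. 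The paper's argument is shorter and avoids sequences entirely, which is a bit cleaner; yours is more elementary and makes explicit the obstruction (one cannot lift a convergent sequence from $\phi(X)$ to $X$ without passing to a subsequence) that the closed-preimage formulation sidesteps. Both are valid since everything takes place in metric (indeed Euclidean) spaces, where sequential continuity and continuity coincide.
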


Note that whenever $S$ exists, it is given by
\[S(h(x), h(Tx), \ldots, h(T^{k-1}x)) = (h(Tx), h(T^2 x), \ldots, h(T^k x)).\]
It follows that if $h$ is deterministically $k$-predictable, then knowing the first $k$ terms of its time series, one can determine all its further terms by iterating the map $S$.

Consider now a probabilistic setting, when the phase space $X$ admits some probability measure $\mu$, and one predicts measurements performed at a typical ($\mu$-almost every) point. In view of Proposition~\ref{prop:determ_predict_equiv}, we define the following `almost sure' version of deterministic predictability.

\begin{defn}[{\bf Almost-sure deterministic predictability}{}]\label{def:almost_sure_deterministic predictability} Let $X \subset \R^N$, $N \in \N$, be a Borel set with a Borel probability measure $\mu$. Let $T \colon X \to X$ be a Borel transformation, $h \colon X \to \R$ a Borel observable and $k \in \N$. We say that $h$ is \emph{almost surely deterministically} $k$-\emph{predictable} with respect to $\mu$, if there exists a Borel set $X_h \subset X$ of full $\mu$-measure such that for every $x_1, x_2 \in X_h$, if $\phi(x_1)  = \phi(x_2)$, then $\phi(Tx_1) = \phi(Tx_2)$. Equivalently, there exists a map $S \colon \phi(X_h) \to \phi(X)$ such that $\phi(T x) = S(\phi(x))$ for every $x \in X_h$, i.e.~the diagram
\[
\begin{CD}
	X_h @>T>> X\\
	@VV\phi V @VV \phi V\\
	\phi(X_h) @>S>> \phi(X)
\end{CD}
\]
commutes and
\begin{equation}\label{def:S}
S(h(x), \ldots, h(T^{k-1}x)) = (h(Tx), \ldots, h(T^k x)) \quad \text{for $\mu$-almost every } x \in X.     
\end{equation}
\end{defn}

\begin{defn} In the context of Proposition~\ref{prop:determ_predict_equiv} and Definition~\ref{def:almost_sure_deterministic predictability}, we call the map $S$ (wherever it is defined) the \emph{prediction map}. The space $\phi(X)$ (or $\phi(X_h)$) is called the \emph{model space}, and the number $k$ is the \emph{delay length} or \emph{delay dimension}.\footnote{ In the literature it is more common to find the terminology \emph{reconstruction space} or \emph{embedding space} for $\phi(X)$ as well as \emph{embedding dimension} for $k$. Note, however, that this terminology is inadequate when $\phi$ is not injective.} 
\end{defn}

\begin{rem}\label{rem:almost_sure_deterministic predictability} In the case of deterministic predictability (Definition~\ref{def:deterministic predictability} and Proposition~\ref{prop:determ_predict_equiv}), the dynamical system $(\phi(X), S)$ is a topological factor (\emph{model system}) of $(X, T)$. In contrast, in the case of almost sure deterministic predictability (Definition~\ref{def:almost_sure_deterministic predictability}), the prediction map $S$ is not guaranteed to map $\phi(X_h)$ into itself, hence it might not be possible to iterate it. However, if we assume the measure $\mu$ to be $T$-invariant, then the set $X_h$ can be chosen to satisfy $T(X_h) \subset X_h$ (it is enough to replace $X_h$ by $\tilde X_h = \bigcap_{n=0}^\infty T^{-n} (X_h)$, cf.~\cite[Remark~4.4(a)]{BGS20}). Then $\phi(X_h)$ is $S$-invariant and the system $(\phi(X_h), \phi_*\mu, S)$ is a measurable factor of $(X_h, \mu, T|_{X_h})$, with the commuting diagram
\[
\begin{CD}
	X_h @>T>> X_h\\
	@VV\phi V @VV \phi V\\
	\phi(X_h) @>S>> \phi(X_h)
\end{CD},
\]
providing an \emph{almost sure model} of the system $(X, T)$.
\end{rem}

The following proposition describes properties of continuous almost surely deterministically predictable observables for continuous systems.

\begin{prop}\label{prop:almost_sure_predict_properties}
Let $X$ be a Borel set, $\mu$ a Borel probability measure on $X$ and $T\colon X \to X$ a continuous map. Let $h \colon X \to \R$ be a continuous observable, which is almost surely deterministically $k$-predictable with respect to $\mu$ for some $k \in \N$. Then the following hold.
\begin{enumerate}[$($i$)$]
\item The set $X_h$ from Definition~{\rm\ref{def:almost_sure_deterministic predictability}} can be chosen such that the model space $\phi(X_h)$ is a Borel set and the prediction map $S$ is a Borel map.
\item If $\phi\circ T \in L^1(\mu)$ $($e.g.~if $h$ is bounded$)$, then $S \in L^1(\phi_*\mu)$ and
\[
\chi_\eps(y) = \frac{1}{\phi_*\mu(B(y,\eps))} \int_{B(y,\eps)} S \, d\phi_*\mu  \xrightarrow[\eps\to 0]{} S(y)
\]
for $\phi_*\mu$-almost every $y \in \phi(X)$, where $\chi_\eps$ is introduced in Definition~{\rm\ref{def:dynamical predictability}}.

\end{enumerate}

\noindent
Moreover, if the measure $\mu$ is $T$-invariant, then the assumption $\phi\circ T \in L^1(\mu)$ in assertion~$($ii$)$ can be replaced by $h \in L^1(\mu)$.
\end{prop}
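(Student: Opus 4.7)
For (i), my approach is to replace $X_h$ by an $F_\sigma$ subset of full $\mu$-measure using inner regularity of $\mu$ on the Polish space $X$: write $X_h = \bigcup_n K_n$ with $K_n$ an increasing sequence of compact sets. Since $\phi$ is continuous, each $\phi(K_n)$ is compact, so $\phi(X_h) = \bigcup_n \phi(K_n)$ is $F_\sigma$ in $\R^k$ and hence Borel. On each $K_n \subset X_h$ the map $\phi \circ T$ is constant on the fibres of $\phi|_{K_n}$, so it factors as $\phi \circ T|_{K_n} = S_n \circ \phi|_{K_n}$ for a well-defined $S_n \colon \phi(K_n) \to \R^k$. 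A standard compactness argument shows $S_n$ is continuous: given $y_m \to y$ in $\phi(K_n)$ and preimages $x_m \in K_n$, any subsequential limit $x^* \in K_n$ of $\{x_m\}$ satisfies $\phi(x^*) = y$ by continuity of $\phi$, hence $S_n(y_{m_j}) = \phi(Tx_{m_j}) \to \phi(Tx^*) = S_n(y)$; since every subsequence admits such a sub-subsequence, the full sequence converges.

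The maps $S_n$ agree on overlaps $\phi(K_n) \cap \phi(K_m)$ by the defining property of $X_h$, so they glue to a single $S \colon \phi(X_h) \to \phi(X)$ satisfying $\phi \circ T = S \circ \phi$ on $X_h$. Borel measurability of $S$ follows: for each open $U \subset \R^k$, $S^{-1}(U) \cap \phi(K_n) = S_n^{-1}(U)$ is relatively open in the Borel set $\phi(K_n)$, and $S^{-1}(U) = \bigcup_n S_n^{-1}(U)$ is therefore Borel. This establishes (i).

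For (ii), the identity $\phi \circ T = S \circ \phi$ on the full-measure set $X_h$ combined with the pushforward change-of-variables formula yields
\[
\int_{\R^k} \|S\|\, d\phi_*\mu = \int_X \|\phi\circ T\|\, d\mu < \infty,
\]
so $S \in L^1(\phi_*\mu)$, and analogously
\[
\chi_\eps(y) = \frac{1}{\phi_*\mu(B(y,\eps))} \int_{\phi^{-1}(B(y,\eps))} \phi\circ T\, d\mu = \frac{1}{\phi_*\mu(B(y,\eps))} \int_{B(y,\eps)} S\, d\phi_*\mu.
\]
The claimed $\phi_*\mu$-a.e.\ convergence $\chi_\eps(y) \to S(y)$ then follows component-wise from the Lebesgue differentiation theorem for finite Borel measures on $\R^k$, which is a standard consequence of the Besicovitch covering theorem. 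For the moreover clause, $T$-invariance gives $\|\phi \circ T\|_{L^1(\mu)} \le k\|h\|_{L^1(\mu)}$ by the triangle inequality, so $h \in L^1(\mu)$ implies $\phi \circ T \in L^1(\mu)$. I expect the main technical step to be the Borel measurability of $S$ in (i), which hinges on the inner-regularity reduction to compact sets together with the compactness-based continuity of each $S_n$; once this is in place, (ii) reduces to routine change-of-variables and Lebesgue differentiation.
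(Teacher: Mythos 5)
Your proof is correct, and in both parts it rests on the same key ideas as the paper's: inner regularity of $\mu$ to replace $X_h$ by a $\sigma$-compact full-measure set, continuity of $\phi$ and $\phi\circ T$, the change-of-variables identity $\int_{\phi^{-1}(B)} \phi\circ T\,d\mu = \int_B S\,d\phi_*\mu$, and the Besicovitch (Lebesgue) differentiation theorem for finite Borel measures on $\R^k$. The only real divergence is in how Borel measurability of $S$ is finished in part (i). The paper argues directly with preimages of closed sets: $S^{-1}(F) = \phi\bigl(X_h\cap(\phi\circ T)^{-1}(F)\bigr)$, which is $\sigma$-compact and hence Borel, with no need to discuss continuity of $S$ anywhere. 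You instead prove that each restriction $S_n = S|_{\phi(K_n)}$ is continuous via a compactness argument (the same argument the paper uses to prove the compact-$X$ case of Proposition~\ref{prop:determ_predict_equiv}) and then glue, concluding measurability from $S^{-1}(U) = \bigcup_n S_n^{-1}(U)$ for open $U$. Your route does a bit more work but yields the slightly stronger conclusion that $S$ is continuous on each compact $\phi(K_n)$; the paper's route is shorter and isolates measurability from continuity. Both are valid, and part (ii) and the moreover clause are handled identically in the two arguments.
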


The proof of Proposition~\ref{prop:almost_sure_predict_properties} is presented in Section~\ref{sec:proof_predict_prob}.

Our next result shows, in particular, a surprising fact, that the two probabilistic notions of predictability, presented in Definitions~\ref{def:SSOYpredictability} and~\ref{def:almost_sure_deterministic predictability}, coincide for continuous systems on compact sets.

\begin{thm}[{\bf Relations between two notions of almost sure predictability}{}]\label{thm:almost_sure_predict_equiv}

Let $X$ be a Borel set, $\mu$ a Borel probability measure on $X$ and $T\colon X \to X$ a continuous map. Let $h \colon X \to \R$ be a continuous observable and $k \in \N$. Consider the $k$-delay coordinate map $\phi$ defined in \eqref{eq:coord_map} and the prediction map $S$. Then the following hold.
\begin{enumerate}[$($i$)$]
\item If $\phi\circ T \in L^2(\mu)$ $($e.g.~if $h$ is bounded$)$ and $h$ is almost surely deterministically $k$-predictable with respect to $\mu$, then $S \in L^2(\phi_*\mu)$ and 
\[
\sigma_\eps(y) = \bigg(\frac{1}{\phi_*\mu(B(y,\eps))} \int_{B(y,\eps)} \|S - \chi_\eps(y)\|^2 \, d\phi_*\mu\bigg)^{\frac 1 2}  \xrightarrow[\eps\to 0]{} 0
\]
for $\phi_*\mu$-almost every $y \in \phi(X)$ and $\chi_\eps, \sigma_\eps$ from Definition~{\rm\ref{def:dynamical predictability}}, so $h$ is almost surely $k$-predictable with respect to $\mu$.

\item If $X$ is compact, then $h$ is almost surely deterministically $k$-predictable with respect to $\mu$ if and only if it is almost surely $k$-predictable with respect to $\mu$.
\end{enumerate}

\noindent
Moreover, if the measure $\mu$ is $T$-invariant, then the assumption $\phi\circ T \in L^2(\mu)$ in assertion~$($i$)$ can be replaced by $h \in L^2(\mu)$.
\end{thm}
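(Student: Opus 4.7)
The plan is to reduce both directions to the Lebesgue--Besicovitch differentiation theorem for Radon measures on $\R^k$ applied to the push-forward $\phi_*\mu$, together with elementary Hilbert-space splitting.

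For part~(i), suppose $h$ is almost surely deterministically $k$-predictable with prediction map $S$. Since $S \circ \phi = \phi \circ T$ $\mu$-almost everywhere by Definition~\ref{def:almost_sure_deterministic predictability}, the change-of-variables formula yields $\int \|S\|^2 d\phi_*\mu = \int \|\phi \circ T\|^2 d\mu < \infty$, so $S \in L^2(\phi_*\mu)$. The same reasoning rewrites the definitions as
\[
\chi_\eps(y) = \frac{1}{\phi_*\mu(B(y,\eps))} \int_{B(y,\eps)} S\, d\phi_*\mu, \qquad \sigma_\eps(y)^2 = \frac{1}{\phi_*\mu(B(y,\eps))} \int_{B(y,\eps)} \|S\|^2 d\phi_*\mu - \|\chi_\eps(y)\|^2.
\]
The Lebesgue--Besicovitch differentiation theorem for the Radon measure $\phi_*\mu$, applied componentwise to $S$ and to the scalar function $\|S\|^2$ (both in $L^1(\phi_*\mu)$), gives $\chi_\eps(y) \to S(y)$ and the average of $\|S\|^2$ converges to $\|S(y)\|^2$ for $\phi_*\mu$-almost every $y$, whence $\sigma_\eps(y) \to 0$. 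The moreover clause follows since $T$-invariance implies $\int \|\phi \circ T\|^2 d\mu = \int \|\phi\|^2 d\mu \le k \int h^2 d\mu$.

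The forward direction of part~(ii) is immediate from~(i): compactness of $X$ together with continuity of $h$ and $T$ makes $\phi \circ T$ bounded, so $\phi \circ T \in L^2(\mu)$ automatically. For the reverse direction, apply the Doob--Dynkin lemma to obtain a Borel map $f \colon \R^k \to \R^k$ with $f \circ \phi = \E[\phi \circ T \mid \sigma(\phi)]$ $\mu$-almost everywhere; such $f$ exists because $\phi \circ T$ is bounded and hence in $L^2(\mu)$. Writing $\phi \circ T - \chi_\eps(y) = (\phi \circ T - f \circ \phi) + (f \circ \phi - \chi_\eps(y))$ and expanding the square, the cross term integrates to zero over $\phi^{-1}(B(y,\eps))$ by the defining orthogonality of conditional expectation (the second summand is $\sigma(\phi)$-measurable and bounded). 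This gives the decomposition
\[
\sigma_\eps(y)^2 = \frac{1}{\phi_*\mu(B(y,\eps))} \int_{B(y,\eps)} g\, d\phi_*\mu + \frac{1}{\phi_*\mu(B(y,\eps))} \int_{B(y,\eps)} \|f - \chi_\eps(y)\|^2 d\phi_*\mu,
\]
where $g \circ \phi = \E\bigl[\|\phi \circ T - f \circ \phi\|^2 \bigm| \sigma(\phi)\bigr]$ and $g$ is a Borel function on $\R^k$. By Lebesgue--Besicovitch differentiation (applied to $f$, $\|f\|^2$, and $g$, all in $L^1(\phi_*\mu)$), the second term tends to zero at $\phi_*\mu$-almost every $y$, while the first tends to $g(y)$. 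Consequently, the hypothesis $\sigma(y) = 0$ for $\phi_*\mu$-almost every $y$ forces $g = 0$ $\phi_*\mu$-almost everywhere, i.e.\ $\phi \circ T = f \circ \phi$ on a Borel set $X_h \subset X$ of full $\mu$-measure. For any $x_1, x_2 \in X_h$ with $\phi(x_1) = \phi(x_2)$ we then obtain $\phi(Tx_1) = f(\phi(x_1)) = f(\phi(x_2)) = \phi(Tx_2)$, yielding almost sure deterministic $k$-predictability.

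The main technical points requiring care are the justification of Lebesgue--Besicovitch differentiation for the Radon measure $\phi_*\mu$ on $\R^k$ and the selection of a Borel version $f$ via Doob--Dynkin so that the exceptional set $X_h$ is genuinely Borel; both are standard but need to be spelled out, particularly because $\phi$ is not assumed to be injective.
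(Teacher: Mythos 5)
Your proof is correct, and part~(ii) takes a genuinely different route from the paper. For part~(i) the two arguments are essentially the same modulo algebraic presentation: the paper splits $\|\phi\circ T - S(y) + S(y) - \chi_\eps(y)\|^2 \le 2\|S\circ\phi - S(y)\|^2 + 2\|S(y)-\chi_\eps(y)\|^2$ and applies the integral differentiation theorem to the first summand, whereas you use the variance identity $\sigma_\eps(y)^2 = \fint_{B(y,\eps)}\|S\|^2\,d\phi_*\mu - \|\chi_\eps(y)\|^2$ and differentiate $S$ and $\|S\|^2$ separately; both come down to Lebesgue--Besicovitch differentiation for the Radon measure $\phi_*\mu$, and your version is marginally cleaner. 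The substantive difference is in the reverse direction of part~(ii). The paper invokes the Rokhlin disintegration $\{\mu_y\}$ of $\mu$ along $\phi$ and quotes a corollary from \cite{BGS22} asserting that vanishing prediction error forces $\phi\circ T$ to be $\mu_y$-essentially constant on $\phi_*\mu$-almost every fiber. You replace this disintegration machinery with a Doob--Dynkin factorization $f\circ\phi = \E[\phi\circ T\mid\sigma(\phi)]$ plus a Pythagorean decomposition through conditional expectation, so that $\sigma(y)^2 = g(y)$ where $g\circ\phi = \E[\|\phi\circ T - f\circ\phi\|^2\mid\sigma(\phi)]$, and then $\sigma\equiv 0$ a.e.\ gives $\phi\circ T = f\circ\phi$ $\mu$-a.e. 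This is self-contained (no external disintegration result needed), stays entirely in the $L^2$-orthogonality framework, and produces the full-measure Borel set $X_h = \{x : \phi(Tx)=f(\phi(x))\}$ directly. The paper's route, on the other hand, links directly to the conditional-measure language used elsewhere in their earlier work and makes the geometric picture of "constant on fibers" explicit. One minor point worth spelling out in your write-up: the convergence of the second term $\fint_{B(y,\eps)}\|f-\chi_\eps(y)\|^2\,d\phi_*\mu\to 0$ uses the same variance identity again (this time for $f$), since the integrand depends on $\eps$ through $\chi_\eps(y)$; you state the conclusion but the reduction to two separate differentiations should be made explicit.
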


The proof of Theorem~\ref{thm:almost_sure_predict_equiv} is presented in Section~\ref{sec:proof_predict_prob}. In view of this result, if $X$ is compact and $T$ is continuous, then one can use the term \emph{almost sure predictability} for a continuous observable $h$ in the context of both Definitions~\ref{def:SSOYpredictability} and~\ref{def:almost_sure_deterministic predictability}. 

The notion of almost sure deterministic predictability enables one to gain a new perspective on the Farmer--Sidorowich algorithm. Indeed, if an observable $h \colon X \to \R$ is almost surely deterministically $k$-predictable, then the points $z_i$ defined in \eqref{eq:y_i} satisfy
\[
z_i = S^i(z_0), \qquad i = 0, \ldots, n
\]
for $\phi_*\mu$-almost every $z_0 \in \R^k$. Hence, by Proposition~\ref{prop:FS}, Proposition~\ref{prop:almost_sure_predict_properties} and Theorem~\ref{thm:almost_sure_predict_equiv}, we immediately obtain the following corollary. 

\begin{cor}\label{cor:FS-predict} Let $X$ be a Borel set, $T\colon X \to X$ a continuous map and $\mu$ an ergodic $T$-invariant Borel probability measure on $X$. Suppose an observable $h \colon X \to \R$ is continuous, $h \in L^1(\mu)$ and $h$ is almost surely deterministically $k$-predictable with respect to $\mu$ for some $k \in \N$. Consider $z_i = z_i(x)$, $i = 0, \ldots, n$ and $\Pred_{x,n,\eps}(y)$, $\Var_{x,n,\eps}(y)$, defined respectively in \eqref{eq:y_i}--\eqref{eq:y_hat} and \eqref{eq:Var} for $x \in X$, $y \in \R^k$, $n \in \N$, $\eps > 0$. Then
\[
\lim_{\eps\to 0} \lim_{n \to \infty} \Pred_{x,n,\eps}(y) = \lim_{\eps\to 0} \chi_\eps(y) = S(y)
\]
for $\mu$-almost every $x \in X$ and $\phi_*\mu$-almost every $y \in \R^k$. Moreover, if $h \in L^2(\mu)$, then
\[
\lim_{\eps\to 0}\lim_{n \to \infty} \Var_{x,n,\eps}(y) =  \lim_{\eps\to 0} (\sigma_\eps(y))^2 = 0
\]
for $\mu$-almost every $x \in X$ and $\phi_*\mu$-almost every $y \in \R^k$.
\end{cor}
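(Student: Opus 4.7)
The corollary is a direct compilation of the three preceding results, so my plan is to chain them carefully and handle the null-set bookkeeping; no new analytic content is required.

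First I would invoke Proposition~\ref{prop:FS}: since $\mu$ is ergodic $T$-invariant and $h \in L^1(\mu)$, for every $\eps > 0$ the inner limit $\lim_{n \to \infty} \Pred_{x,\eps,n}(y) = \chi_\eps(y)$ holds for $\mu$-a.e.\ $x$ and $\phi_*\mu$-a.e.\ $y$. Next I would apply the ``moreover'' clause of Proposition~\ref{prop:almost_sure_predict_properties}(ii): because $\mu$ is $T$-invariant, $h \in L^1(\mu)$, and $h$ is almost surely deterministically $k$-predictable, the outer limit satisfies $\chi_\eps(y) \to S(y)$ as $\eps \to 0$ for $\phi_*\mu$-a.e.\ $y$. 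Chaining these two limits yields the first displayed equality.

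For the variance statement, the $L^2$ part of Proposition~\ref{prop:FS} gives $\lim_{n \to \infty} \Var_{x,\eps,n}(y) = \sigma_\eps(y)^2$ for $\mu$-a.e.\ $x$ and $\phi_*\mu$-a.e.\ $y$, at each fixed $\eps > 0$. The ``moreover'' clause of Theorem~\ref{thm:almost_sure_predict_equiv}(i), applied under $T$-invariance, $h \in L^2(\mu)$, and almost sure deterministic $k$-predictability, then gives $\sigma_\eps(y) \to 0$ as $\eps \to 0$ for $\phi_*\mu$-a.e.\ $y$. Taking a square root (which commutes with the inner $n$-limit for large $n$, since the argument is nonnegative) and combining yields the second assertion.

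The only technical point is that the full-measure set on which the inner $n$-limit is valid is provided by Proposition~\ref{prop:FS} separately for each $\eps$, whereas the iterated limit $\lim_{\eps \to 0}\lim_{n \to \infty}$ requires a single full-measure set of pairs $(x,y)$ that works for all sufficiently small $\eps$. I would handle this by choosing a countable sequence $\eps_m \downarrow 0$, intersecting the countably many $\mu \otimes \phi_*\mu$-conull sets produced by Proposition~\ref{prop:FS} at $\eps = \eps_m$, and reading the iterated limit along this sequence — the limit of the inner values $\chi_{\eps_m}(y)$ and $\sigma_{\eps_m}(y)$ as $m \to \infty$ is already identified by Proposition~\ref{prop:almost_sure_predict_properties} and Theorem~\ref{thm:almost_sure_predict_equiv}, which themselves yield the full continuous limit in $\eps$. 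This measure-theoretic assembly is the only mildly nontrivial step; everything substantive has already been done upstream.
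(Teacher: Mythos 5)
Your proof is correct and follows essentially the same route as the paper, which derives the corollary directly from Proposition~\ref{prop:FS}, Proposition~\ref{prop:almost_sure_predict_properties}, and Theorem~\ref{thm:almost_sure_predict_equiv}. The countable-sequence/intersection argument you add for the null-set bookkeeping is a reasonable way to make precise what the paper leaves implicit when it says the corollary is ``immediate,'' and the square-root observation is sound since $\Var_{x,\eps,n}\ge 0$ and $t\mapsto t^2$ is continuous.
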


It follows that under the assumptions of Corollary~\ref{cor:FS-predict}, at a typical point $y$, the vector $\chi_{\eps}(y)$ for small $\eps > 0$ may be interpreted as a limit of \emph{empirical means} of the prediction map $S$ associated with the Farmer--Sidorowich algorithm, while $\sigma_\eps(y)$ is a suitable limit of \emph{empirical standard deviations} of $S$.

\subsection{Time-delay prediction theorems} As remarked above, deterministic predictability can be used for predicting future values of a given observable, rather than for a faithful reconstruction of the original dynamics. On the other hand,  deterministic predictability holds whenever the conclusion of the Takens delay embedding theorem is satisfied, i.e.~whenever the delay coordinate map $\phi$ is injective on $X$ (with the prediction map given as $S = \phi \circ T \circ \phi^{-1}$). In the language of the theory of dynamical systems, deterministic predictability means that $\phi$ is a semi-conjugation between the original system $(X,T)$ and its factor $(\phi(X), S)$, while injectivity of $S$ means that the two systems are isomorphic (conjugate). Obviously, in many cases an observable $h$ can be deterministically predictable, while $\phi$ is not injective. Therefore, it is natural to expect that typical deterministic predictability may hold under weaker assumptions than the ones required for Takens-type delay embedding theorems. 
Indeed, a result of this kind (called a time-delay embedding theorem for non-injective smooth dynamics) was proved by Takens \cite{T02} in 2002. More precisely, he showed that if $X$ is a compact $C^1$-manifold and $k > 2 \dim X$, then for a typical (generic in the Baire sense) pair of a $C^1$-map $T\colon X \to X$ and a $C^1$-observable $h\colon X \to \R$, the $k$-delay coordinate map $\phi$ determines the $(k-1)$-th iterate of $T$, i.e.~there exists a (Lipschitz) map $\pi\colon \phi(X) \to X$ such that 
\begin{equation}\label{eq:pi}
\pi \circ \phi = T^{k-1}.
\end{equation}
This conclusion is clearly weaker than the injectivity of $\phi$ in the classical time-delay embedding results (in fact, without assuming the injectivity of $T$, the injectivity of $\phi$ does not hold even in generic sense, see \cite[Section~2]{T02}). On the other hand, the result implies typical deterministic predictability, as one can define the prediction map $S$ in terms of $\pi$ as 
\[
S(y) = S(y_1, \ldots, y_k) = (y_2, \ldots, y_k, h(T(\pi(y_1, \ldots, y_k)))).
\]
Similar results have been recently obtained in the topological category \cite{kato2021jaworski,Kato23} for some classes of topological spaces.

In this paper we prove a general deterministic time-delay prediction theorem in the setup of locally Lipschitz maps on arbitrary sets in Euclidean space. In comparison to the Takens-type delay embedding theorem by Sauer, Yorke and Casdagli \cite{SYC91}, quoted in Subsection~\ref{subsec:intro}, we remove the assumption of the injectivity of $T$ and the bound on the dimension of the sets of periodic points, to obtain deterministic predictability rather than the injectivity of the delay coordinate map.

\begin{thm}[{\bf Deterministic time-delay prediction theorem}{}]\label{thm:predict_determ}
Let $X \subset \R^N$, $N \in \N$, and let $T\colon X \to X$ be a locally Lipschitz map. Then a prevalent locally Lipschitz observable $h \colon X \to \R$ is deterministically $k$-predictable for $k > 2\lbdim X$, hence the diagram
\[
\begin{CD}
	X @>T>> X\\
	@VV\phi V @VV \phi V\\
	\phi(X) @>S>> \phi(X)
\end{CD}
\] 
commutes, where $\phi$ is the $k$-delay coordinate map defined in \eqref{eq:coord_map} and $S$ is the prediction map. Furthermore, if $X$ is compact, then $S$ is continuous.
\end{thm}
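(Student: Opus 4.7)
The plan is to establish deterministic $k$-predictability via prevalence, and to derive continuity of the prediction map $S$ from a separate compactness argument. As a first step, I would reduce to the case when $X$ is compact: writing $X = \bigcup_{n \in \N} X_n$ with $X_n \subset X_{n+1}$ compact (say $X_n = X \cap \overline{B(0,n)}$), every pair $(x_1, x_2) \in X \times X$ lies in $X_n \times X_n$ for some $n$, and on the compact set $\bigcup_{i=0}^{k} T^i(X_n)$ a locally Lipschitz observable is automatically Lipschitz. Since $\lbdim$ is monotone under inclusion, $k > 2\lbdim X_n$ for every $n$, and since a single probe will serve all $X_n$ simultaneously and countable intersections of full-measure sets are of full measure, it suffices to establish the statement for each compact $X_n$ separately.

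Assume now $X$ compact and $T,h$ Lipschitz. For the probe I take polynomial maps $p_\alpha : \R^N \to \R$ of degree $\le D$ (for some $D = D(k,N)$ chosen below), parameterized by coefficients $\alpha$ in a compact cube $K \subset \R^M$ and equipped with normalized Lebesgue measure $\PP$. The goal is to show $\PP(\{\alpha \in K : h + p_\alpha \text{ is not deterministically } k\text{-predictable}\}) = 0$ for every Lipschitz $h$. A pair $(x_1,x_2)$ obstructs predictability of $h_\alpha := h + p_\alpha$ exactly when $h_\alpha(T^i x_1) = h_\alpha(T^i x_2)$ for $i = 0,\dots,k-1$ but $h_\alpha(T^k x_1) \neq h_\alpha(T^k x_2)$. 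Crucially, if $T^j x_1 = T^j x_2$ for some $j \le k-1$, then $T^i x_1 = T^i x_2$ for every $i \ge j$, forcing equality at $i = k$; so bad events are confined to
\[
\widetilde{X} := \{(x_1,x_2) \in X \times X : T^i x_1 \neq T^i x_2 \text{ for } i = 0, \dots, k\},
\]
on which the $k$ equalities amount to the inhomogeneous linear system in $\alpha$
\[
p_\alpha(T^i x_1) - p_\alpha(T^i x_2) = h(T^i x_2) - h(T^i x_1), \qquad i = 0, \dots, k-1.
\]

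For the quantitative bound I would run a covering argument using $k > 2\lbdim X$. Fix $\beta$ with $\lbdim X < \beta < k/2$; then along some sequence $r_n \downarrow 0$, $X$ admits a cover by at most $r_n^{-\beta}$ balls of radius $r_n$, giving a cover of $X \times X$ by at most $r_n^{-2\beta}$ product balls $B(y_1,r_n) \times B(y_2,r_n)$. For each such product ball, uniform Lipschitz control on $p_\alpha|_X$ (for $\alpha \in K$) and on the iterates $T^i$ converts the system above, evaluated at $(x_1,x_2)$ close to $(y_1,y_2)$, into a perturbed linearised system
\[
|L_i(\alpha) - c_i| \leq C r_n, \qquad i = 0, \dots, k-1,
\]
where $L_i(\alpha) = p_\alpha(T^i y_1) - p_\alpha(T^i y_2)$ and $c_i = h(T^i y_2) - h(T^i y_1)$. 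Provided the $k$ linear forms $L_0,\dots,L_{k-1}$ on $\R^M$ have a $k \times k$ minor of determinant bounded below by $\delta > 0$, the admissible set of $\alpha$ has $\PP$-measure $\lesssim r_n^{k}/\delta$; summing over the $\le r_n^{-2\beta}$ product balls bounds the total $\PP$-measure by $\lesssim r_n^{k - 2\beta}$, which tends to zero along the sequence $r_n$.

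The main technical obstacle is guaranteeing uniform linear independence of $L_0,\dots,L_{k-1}$ across product balls and handling the \emph{degenerate} stratum --- pairs where the orbit segments $\{T^i y_1\}$ and $\{T^i y_2\}$ exhibit coincidences (e.g.\ $T^i y_1 = T^j y_2$ for some $i \neq j$, or periodic repetitions within a single orbit), at which the matrix of difference functionals drops rank. This is the familiar difficulty in Sauer--Yorke--Casdagli / Hunt--Sauer--Yorke prevalence arguments and is overcome by choosing $D$ large enough to separate any relevant $2k$-point configuration in $\R^N$, combined with a stratification: the degenerate locus is cut out by polynomial equations on $(y_1,y_2)$, has strictly smaller box-counting dimension, and is then treated recursively (or absorbed into the error via a finer decomposition of $\widetilde X$). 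Finally, continuity of $S$ for compact $X$ is immediate: given $y_n \to y$ in $\phi(X)$ with preimages $x_n \in \phi^{-1}(y_n)$, any subsequential limit $x \in X$ of $(x_n)$ satisfies $\phi(x) = y$ by continuity, hence $S(y_{n_j}) = \phi(Tx_{n_j}) \to \phi(Tx) = S(y)$; since $\phi(X)$ is compact and every subsequential limit coincides, $S(y_n) \to S(y)$.
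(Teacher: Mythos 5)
Your overall architecture (reduction to compact pieces, polynomial probe sets, covering bound using $k>2\lbdim X$, continuity from compactness) parallels the paper, and the continuity argument at the end is correct. But there is a genuine gap in the treatment of the degenerate stratum, and it is precisely the hard part of this theorem.

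You acknowledge that the linear forms $L_i(\alpha)=p_\alpha(T^i y_1)-p_\alpha(T^i y_2)$ can drop rank when the orbit segments $\{T^i y_1\}_{i\le k}$, $\{T^j y_2\}_{j\le k}$ have coincidences, and you propose to handle this by a stratification: ``the degenerate locus is cut out by polynomial equations on $(y_1,y_2)$, has strictly smaller box-counting dimension, and is then treated recursively.'' This claim is false in general. Rank deficiency is governed by the combinatorics of the orbit, not by polynomial equations in Euclidean coordinates, and it can occur on a set of \emph{full} dimension in $X\times X$. Take $T=\mathrm{Id}$ and $k\ge 2$: for every pair $(x_1,x_2)$ the matrix of difference functionals has identical rows and rank $1<k$, so the degenerate locus is all of $X\times X$. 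More generally, if $T$ has a positive-dimensional set of periodic points of period $p<k$ (nothing in the hypotheses forbids this), the degenerate locus is full-dimensional and your recursion has no base case. This is exactly why Sauer--Yorke--Casdagli \emph{need} the hypothesis $2\udim(\{x:T^px=x\})<p$ for $p<k$; the theorem you are asked to prove drops that hypothesis, so a plan that would re-derive SYC's mechanism cannot be complete. Also, enlarging the polynomial degree $D$ only guarantees the interpolating property (the analogue of $\mathrm{rank}\,V_{x,y}$ maximal in the paper's notation); it does nothing for the drop of rank in the combinatorial incidence factor.

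The missing ingredient is the paper's Proposition~\ref{prop: rank or predict} and Corollary~\ref{cor:crucial}: if $\rank D_{x,y}<k$, then for \emph{every} $\alpha$ one has $\|\phi_\alpha(Tx)-\phi_\alpha(Ty)\|\le 2k\,\|\phi_\alpha(x)-\phi_\alpha(y)\|$, so in particular $\phi_\alpha(x)=\phi_\alpha(y)$ forces $\phi_\alpha(Tx)=\phi_\alpha(Ty)$. In other words, rank deficiency is not a bad event to be excised from $X\times X$; it \emph{automatically implies} predictability at that pair. This is proved by a purely combinatorial case analysis on the orbit structure (periodic / pre-periodic / aperiodic) using a graph on the points $T^ix,T^iy$. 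With this in hand the covering argument only needs to run over pairs where $\sigma_k(D_{x,y})$ is bounded below, and there Lemma~\ref{lem: key_ineq_inter} gives the $\lesssim (\eps_i/\sigma_k)^k$ bound that closes the argument — exactly the shape of your product-ball estimate, but now applied on the correct set. Without a substitute for Proposition~\ref{prop: rank or predict}, your proof does not go through unless one reinstates a periodic-point dimension hypothesis, and then it proves a weaker statement than the one claimed.
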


The proof of this result (in fact, its extended version given by Theorem~\ref{thm:predict_determ_ext}) is presented in Section~\ref{sec:proof_takens}. 

\begin{rem}
The existence of the map $\pi$ from \eqref{eq:pi} requires a bound on the dimension of periodic points.\footnote{ Indeed, let $X$ be a topological space which does not embed into $\R$ and define $T$ to be the identity on $X$. Then it is easy to see that a map $\pi$ satisfying $\pi \circ \phi = T^{k-1}=\Id$ exists if and only if $h$ is injective on $X$. This is impossible as $X$ does not embed into $\R$.} Therefore, one cannot hope to improve Theorem~\ref{thm:predict_determ} to obtain a result analogous to the one in \cite{T02}.
\end{rem}

In our previous paper \cite{BGS20}, we studied the problem of determining conditions under which a system $(X, T)$ can be \emph{almost surely} reconstructed by the $k$-delay coordinate map, with respect to some probability distribution on the phase space $X$. In \cite[Theorems~1.2 and~4.3]{BGS20}, we proved that for an injective, (locally) Lipschitz system $(X, T)$ on a Borel set in Euclidean space with a Borel probability measure $\mu$ on $X$, the $k$-delay coordinate map corresponding to a prevalent (locally) Lipschitz observable is injective on a set of full $\mu$-measure, provided $k > \hdim \mu$ and the Hausdorff dimension of $\mu$ restricted to the set of $p$-periodic points is smaller than $p$ for every $p =1, \ldots, k-1$. This agrees with the general philosophy of the SSOY conjectures, where the number of measurements can be reduced by half, if one is allowed to neglect sets of measure zero. Following the above approach, we can establish the following `almost sure' version of Theorem~\ref{thm:predict_determ}.

\begin{thm}[{\bf Almost sure time-delay prediction theorem}{}]\label{thm:predict_prob}
Let $X \subset \R^N$, $N \in \N$, be a Borel set, $\mu$ a Borel probability measure on $X$ and $T\colon X \to X$ a locally Lipschitz map. Then a prevalent locally Lipschitz observable $h \colon X \to \R$ is almost surely deterministically $k$-predictable with respect to $\mu$ for $k > \hdim \mu$. If $\mu$ is $T$-invariant, then the set $X_h$ from Definition~{\rm\ref{def:almost_sure_deterministic predictability}} can be chosen to satisfy $T(X_h) \subset X_h$, so that the diagram
\[
\begin{CD}
	X_h @>T>> X_h\\
	@VV\phi V @VV \phi V\\
	\phi(X_h) @>S>> \phi(X_h)
\end{CD}
\]
commutes, where $\phi$ is the $k$-delay coordinate map defined in \eqref{eq:coord_map}, $X_h$ is a full $\mu$-measure Borel subset of $X$, $\phi(X_h)$ is a Borel set and $S$ is a Borel prediction map. Furthermore, if $X$ is compact and $k > \hdim(\supp \mu)$, then $S$ is continuous for a prevalent locally Lipschitz observable.
\end{thm}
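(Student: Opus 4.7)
The plan is to deduce the main (almost sure deterministic predictability) claim from Theorem~\ref{thm:SSOY1} via Theorem~\ref{thm:almost_sure_predict_equiv}(ii), after reducing the locally Lipschitz / Borel setting to the Lipschitz / compact setting by a tightness argument; Borel regularity of $\phi(X_h)$ and $S$ then follows from Proposition~\ref{prop:almost_sure_predict_properties}(i), the $T$-invariance from Remark~\ref{rem:almost_sure_deterministic predictability}, and continuity of $S$ in the compact case from an additional argument.

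For the reduction, Ulam's theorem yields an increasing sequence of compact sets $K_n \subset X$ with $\mu(X \setminus K_n) < 2^{-n}$. Since $T$ is continuous (being locally Lipschitz), the finite forward union $L_n = \bigcup_{j=0}^{k} T^j(K_n)$ is a compact subset of $X$ on which $T$ is Lipschitz by local Lipschitz continuity plus compactness. Applying Theorem~\ref{thm:SSOY1} (or a minor extension handling $T|_{L_n}$ which is Lipschitz but not necessarily self-mapping $L_n$) with the normalised measure $\mu_n = \mu(\cdot \cap K_n)/\mu(K_n)$ on $L_n$ (for which $\hdim \mu_n \le \hdim \mu < k$) produces a prevalent set of Lipschitz observables on $L_n$ that are almost surely $k$-predictable with respect to $\mu_n$. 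Theorem~\ref{thm:almost_sure_predict_equiv}(ii), valid on compact $L_n$ with continuous dynamics, upgrades almost sure SSOY-style $k$-predictability to almost sure deterministic $k$-predictability, furnishing a Borel set $X_h^{(n)} \subset K_n$ with $\mu_n(X_h^{(n)}) = 1$ on which $\phi(x_1) = \phi(x_2)$ forces $\phi(Tx_1) = \phi(Tx_2)$. Transferring prevalence from Lipschitz observables on each $L_n$ to locally Lipschitz observables on $X$ by the standard probe-space construction (as in \cite{BGS20}) and intersecting over $n$ yields a single prevalent set $\mathcal S$ of locally Lipschitz $h \colon X \to \R$ working for all $n$ simultaneously. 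Taking $X_h = \liminf_n X_h^{(n)} = \bigcup_N \bigcap_{n \ge N} X_h^{(n)}$ and invoking Borel--Cantelli (with $\mu(X \setminus X_h^{(n)}) = \mu(X \setminus K_n) < 2^{-n}$) gives $\mu(X_h) = 1$; any two points of $X_h$ eventually share a common $X_h^{(n)}$, so the deterministic predictability implication holds on $X_h$. The $T$-invariant case is handled as in Remark~\ref{rem:almost_sure_deterministic predictability} by passing to $\bigcap_{j \ge 0} T^{-j}(X_h)$, and Proposition~\ref{prop:almost_sure_predict_properties}(i) supplies the Borel structure of $\phi(X_h)$ and $S$.

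For the continuity claim in the case when $X$ is compact and $k > \hdim(\supp \mu)$, I would re-run the preceding scheme on the compact set $Y = \supp \mu$ (on which $\mu|_Y$ has full support and $\hdim \mu|_Y \le \hdim Y < k$) to obtain a Borel prediction map $S$ on $\phi(X_h)$ for some full-measure Borel $X_h \subset Y$. The main obstacle is to strengthen the almost sure deterministic predictability on $Y$ to a pointwise statement on all of $Y$ --- equivalently, to show that the bad-pair set $\{(x_1, x_2) \in Y \times Y : \phi(x_1) = \phi(x_2),\ \phi(Tx_1) \ne \phi(Tx_2)\}$ is empty for prevalent $h$ --- from which continuity of $S$ on the compact set $\phi(Y)$ follows by the continuity clause of Proposition~\ref{prop:determ_predict_equiv}. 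I expect this final step, which must reach the threshold $k > \hdim(\supp \mu)$ rather than the strictly stronger bound $k > 2\lbdim Y$ afforded by direct application of Theorem~\ref{thm:predict_determ}, to be the principal technical difficulty, and would approach it via dimension estimates on preimages of $\phi$ in the spirit of the proofs of Theorems~\ref{thm:SSOY1} and~\ref{thm:SSOY2}.
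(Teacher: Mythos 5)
Your proposal has a fatal circularity: you propose to derive Theorem~\ref{thm:predict_prob} from Theorem~\ref{thm:SSOY1} (via Theorem~\ref{thm:almost_sure_predict_equiv}(ii)), but the paper establishes Theorem~\ref{thm:SSOY1} \emph{as a consequence of} Theorem~\ref{thm:predict_prob} (more precisely of its extended version, Theorem~\ref{thm:predict_prob_ext}) together with Theorem~\ref{thm:almost_sure_predict_equiv}; no independent proof of Theorem~\ref{thm:SSOY1} exists in the paper. Even if one tried to salvage the appeal by producing Theorem~\ref{thm:SSOY1} another way, your reduction to the compact set $L_n = \bigcup_{j=0}^{k} T^j(K_n)$ hits the problem that $T$ need not map $L_n$ into $L_n$; you dismiss this as a ``minor extension,'' but dealing with $T|_{L_n}$ not being a self-map is precisely the content of Lemma~\ref{lem:lipschitz decomposition} and the direct covering argument in the paper's proof of Theorem~\ref{thm:predict_prob_ext}, which avoids passing through any compact-self-map statement. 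The paper's actual proof is a direct Fubini argument: fix $x$, use Corollary~\ref{cor:crucial} together with Lemma~\ref{lem: general cover bound} to show that the set of $\alpha$ for which some $y$ violates deterministic predictability at $x$ has Lebesgue measure zero (this uses $\mu\perp\mH^{k}$ to cover the sets $X_{n,j}$ cheaply), and then apply Fubini's theorem to $\mu_n\otimes\Leb$. Your ``gluing by Borel--Cantelli'' step is plausible in principle, but you never get to it because the ingredient you want to glue is itself unavailable without the argument you are trying to prove.

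The continuity clause is also mishandled. You assert it is ``equivalent'' to showing the bad-pair set $\{(x_1,x_2)\in Y\times Y : \phi(x_1)=\phi(x_2),\ \phi(Tx_1)\neq\phi(Tx_2)\}$ is empty; that is not equivalent, it is strictly stronger, and it is not provable under $k > \hdim(\supp\mu)$ (that global emptiness is exactly what needs the $k > 2\lbdim Y$ hypothesis of Theorem~\ref{thm:predict_determ}). The paper never proves global emptiness. Instead it proves, for each fixed $x\in\supp\mu$, that for a.e.\ $\alpha$ there is no $y$ forming a bad pair with $x$ (a one-variable statement, costing only $\hdim(\supp\mu) < k$), upgrades this via compactness of $\supp\mu$ to the uniform condition \eqref{eq:continuity at x} at $x$ for a.e.\ $\alpha$, applies Fubini to get a full-measure $Y_\alpha$ of good $x$'s, and then shows $S_\alpha$ is continuous on $\phi_\alpha(X_\alpha\cap Y_\alpha)$ by a sequential compactness argument. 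That is a materially different and weaker target than an empty bad-pair set, and your sketch does not identify it.
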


\begin{rem}
If, additionally, $X$ is closed, $T$ is Lipschitz and $\mu$ is $T$-invariant and ergodic, then the same conclusions hold under the assumption $k > \lid(\mu)$ (cf.~Remark~\ref{rem:SSOY-ID}).
\end{rem}

\begin{rem}
In a recent paper \cite{BGS-predict_few_meas} we show that if $X$ is compact, $T$ is Lipschitz and $\mu$ is $T$-invariant, then for a prevalent Lipschitz observable $h$, the corresponding $k$-delay coordinate map is not injective on a set of full $\mu$-measure for $k < \hdim \mu$. If $\mu$ is additionally ergodic, then a prevalent Lipschitz observable $h$ is not almost surely (deterministically) $k$-predictable for $k < \hdim \mu$. Consequently, $\hdim \mu$ is the precise threshold for the number of delay measurements needed (and sufficient) for typical $\mu$-almost sure reconstruction and prediction for such systems.
\end{rem}

Note that Theorem~\ref{thm:SSOY1} follows directly from Theorem~\ref{thm:predict_prob} and Theorem~\ref{thm:almost_sure_predict_equiv}. In Section~\ref{sec:proof_predict_prob} we prove an extended version of Theorem~\ref{thm:predict_prob}, given by Theorem~\ref{thm:predict_prob_ext}. In Section~\ref{sec:ex} we provide an example showing that neither the assumption $k > \hdim \mu$ nor the assumption $k > \uid(\mu)$ is sufficient to obtain the continuity of the prediction map $S$ for a prevalent set of observables.

\subsection{Estimating the  minimal delay length}\label{subsec:eff dim}

Oftentimes within the timed-delayed measurement framework, experiments are performed on an underlying system whose key attributes, e.g.~its dimension, are unknown.
In such situations, one is faced with the challenge of estimating the delay length $k$ from a sample of finite time series as in \eqref{eq:timeseries},  with the goal of achieving a reasonable model for the system. Manifestly what is considered reasonable depends on the  application at hand (see \cite[Chapter 4]{Abarbanel_book} for an overview of algorithms aiming at this goal). 
A basic algorithm of this kind is the \emph{false nearest neighbor} algorithm introduced in \cite{KBA92}  (see also  \cite[Section 4.2]{Abarbanel_book}; similar algorithms were introduced in \cite{CenysPyragas88, LiebertPawelzikSchuster91}). Let us give a brief overview of this algorithm. For a given $k$ (a candidate for the delay length) one considers points $\phi(x_i) \in \R^k,\ i = 1, \ldots, m$ obtained from the measurements \eqref{eq:timeseries}. For each point $\phi(x_i)$ one considers its nearest neighbor $\phi(x_j)$ in the $k$-dimensional model space.
If the distance $|h(T^k x_i) - h(T^k x_j)|$ is much larger than the distance $\| \phi(x_i) - \phi(x_j) \|$ (relative to some a priori threshold), then $\phi(x_j)$ is called a \emph{false nearest neighbor} of $\phi(x_i)$ (as in such case one expects $x_i$ and $x_j$ to be far away in the original phase space $X$, implying that $\phi(x_i)$ and $\phi(x_j)$ are close due to a poor performance of $\phi$). Next, one computes the proportion (relative to $m$) of points which have a false nearest neighbor.
The smallest $k$ for which this proportion is smaller than a fixed a priori rate is chosen as the delay length. The false nearest neighbor algorithm is often employed in practice (see e.g.~\cite{Oscilators, SeaClutter, BioclimaticBuildings}, and \cite{OzgeSerkan19, Robots, Dance, BaghReddy23} for some recent applications).
It is also being used and studied in the literature on numerical methods for modelling and analysing chaotic dynamics \cite{CoupledSynchornization95, NoisyFNN97, KrakovkaFNN15, OzgeSerkan19}. 
Interestingly, for a number of dynamical systems with known dimension of the phase space, it has been observed that the delay length estimated by the above (or similar) algorithms is smaller than the theoretical Takens delay embedding theorem bound of $2\dim X$, see \cite[Section 2.4.1]{Abarbanel_book} and \cite{KBA92, CenysPyragas88, LiebertPawelzikSchuster91} for examples including the Lorenz and R\"ossler systems, the H\'enon map and the dynamics of stochastic self-modulation of waves in a non-stationary medium.

It seems plausible that the SSOY prediction error conjecture together with our prediction error estimate (Theorem~\ref{thm:SSOY2}) present a correct framework for a mathematical explanation of this numerically observed phenomenon.
Indeed, note that the algorithm described above tests how well future values of the time series are predicted from the previous ones, studying in fact the \emph{predictability} problem rather than the \emph{reconstruction} (embedding) problem. Furthermore, one is interested in having a satisfactory prediction property \emph{with high enough probability}, rather then for \emph{all} sampled points, hence adopting a probabilistic point of view. 
Moreover, the quantitative statement of assertion~(ii) of the SSOY prediction error conjecture as well as assertion~(i) of Theorem~\ref{thm:SSOY2} provide a method for determining the dimension of the phase space as well as the desired delay length.\footnote{ Note that the quantity $\mu(\{x \in X: \sigma_\eps(\phi(x)) > \delta\})$ has the shortcoming that no information on the local behaviour of $\sigma_\eps(\phi(x))$ is obtained. In \cite{McSharrySmith04}, an illuminating local  analysis is performed on several numerical examples. Such analysis may lead to a better choice of the delay length.} 

\subsection*{Structure of the paper} Section~\ref{sec:prelim} contains preliminary definitions and results, as well as a discussion on the notion of prevalence and its variants used in this paper. In Section~\ref{sec:combin} we prove a crucial combinatorial proposition (Proposition~\ref{prop: rank or predict}), and derive a corollary (Corollary~\ref{cor:crucial}), which is essential for the proofs of the main results of the paper. Section~\ref{sec:trans} provides general estimates of the Lebesgue measure of some sets of parameters related to delay coordinate maps for  perturbations of Lipschitz observables. In Section~\ref{sec:proof_takens} we prove Theorem~\ref{thm:predict_determ_ext}, which provides an extended version of the deterministic time-delay prediction theorem (Theorem~\ref{thm:predict_determ}). Section~\ref{sec:proof_predict_prob} contains the proofs of the results on the properties of continuous almost sure deterministically predictable observables and the relations between the two notions of almost sure predictability (Proposition~\ref{prop:almost_sure_predict_properties} and Theorem~\ref{thm:almost_sure_predict_equiv})
as well as Theorem~\ref{thm:predict_prob_ext}, which is an extended version of the almost sure time-delay prediction theorem (Theorem~\ref{thm:predict_prob}). In Section~\ref{sec:proof_ssoy} we prove extended versions of the general SSOY predictability conjecture (Theorem~\ref{thm:SSOY1}), given by Theorem~\ref{thm:SSOY1_ext}, and the prediction error estimate (Theorem~\ref{thm:SSOY2}), split into three results (Theorem~\ref{thm:det pred implies zero error}, Corollary~\ref{cor:Zero prediction error_extended} and Theorem~\ref{thm:SSOY2_ext}). In Section~\ref{sec:ex} we provide examples showing that Theorems~\ref{thm:SSOY2} and~\ref{thm:predict_prob} do not hold under weaker assumptions on the dimension of a given measure (Corollary~\ref{cor:no continuity with hdim} and Proposition~\ref{prop:atomic prevalence}). The paper concludes with a list of open questions related to the considered problems (Section~\ref{sec:open_prob}).

\section{Preliminaries}\label{sec:prelim}

\subsection*{Notation}
The number of elements of a set $A$ is denoted by $\# A$. 
The symbols $\| \cdot \|$, $\dist(\cdot, \cdot),\ \langle \cdot, \cdot \rangle$ and $| \cdot |$ denote, respectively, the Euclidean norm, distance, inner product and diameter in $\R^N$, $N \in \N$. We write $\conv A$ for the convex hull of a set $A \subset \R^N$. The open $\delta$-ball around a point $x \in \R^N$ is denoted by $B_N(x, \delta)$ and the closed ball by $\overline{B_N}(x, \delta)$ (sometimes we omit the dimension $N$ in notation). By $\Leb$ we denote the Lebesgue measure (or the outer Lebesgue measure, in case of non-measurable sets).

\subsection*{Singular values}

Let $\psi \colon \R^m \to \R^k$ be a linear map and let $A$ be the matrix of $\psi$.
It is a classical fact that the following numbers are equal: the dimension of the image of $\psi$; the difference between $k$ and the dimension of the kernel of $\psi$; the maximal number of linearly independent columns of $A$; the maximal number of linearly independent rows of $A$. This common value is called the \emph{rank} of $A$ and is denoted by $\rank A
$.

For $p \in \{1, \ldots, k\}$ let $\sigma_p(A)$ (we also use the symbol $\sigma_p(\psi)$) be the $p$-th largest \emph{singular value} of $A$, i.e.~the $p$-th largest square root of an eigenvalue of the matrix $A^*A$. The following fact is standard (see e.g.~\cite[Lemma 14.2]{Rob11}). 

\begin{lem}\label{lem:rank=singular}
The rank of a matrix equals the number of its non-zero singular values.
\end{lem}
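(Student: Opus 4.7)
The plan is to reduce the statement to an eigenvalue count for the symmetric positive semi-definite matrix $A^*A$, using the identity $\ker A = \ker(A^*A)$.

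First I would show that $\ker A = \ker(A^*A)$. The inclusion $\ker A \subset \ker(A^*A)$ is immediate. For the reverse inclusion, if $A^*Ax = 0$, then $\langle A^*Ax, x\rangle = \langle Ax, Ax\rangle = \|Ax\|^2 = 0$, so $Ax = 0$. By the rank-nullity theorem applied to both maps (both having domain $\R^m$), this gives $\rank A = \rank(A^*A)$.

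Next I would invoke the spectral theorem: since $A^*A$ is symmetric (in fact positive semi-definite, as $\langle A^*Ax, x\rangle = \|Ax\|^2 \ge 0$), it is orthogonally diagonalizable with non-negative real eigenvalues $\lambda_1 \ge \cdots \ge \lambda_m \ge 0$. For a diagonalizable matrix the rank equals the number of non-zero eigenvalues, so $\rank(A^*A) = \#\{p : \lambda_p \ne 0\}$.

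Finally, by definition the singular values are $\sigma_p(A) = \sqrt{\lambda_p}$, so $\sigma_p(A) \ne 0$ if and only if $\lambda_p \ne 0$. Combining the three steps yields $\rank A = \#\{p : \sigma_p(A) \ne 0\}$, which is exactly the claim. There is no substantial obstacle here; the only mildly delicate point is verifying $\ker A = \ker(A^*A)$, which is handled by the inner-product computation above.
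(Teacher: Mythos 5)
Your proof is correct and is essentially the standard argument (the same one found in the reference the paper cites for this lemma, which the paper itself states without proof). The key step $\ker A = \ker(A^*A)$ via $\|Ax\|^2 = \langle A^*Ax, x\rangle$, followed by rank-nullity and the spectral theorem for the symmetric positive semi-definite matrix $A^*A$, is exactly the expected route.
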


We will also use the following lemma, proved as \cite[Lemma~4.2]{SYC91} (see also \cite[Lemma~14.3]{Rob11}).

\begin{lem}\label{lem: key_ineq_inter}
Let $\psi \colon  \R^m \to \R^k$ be a linear transformation. Assume $\sigma_p(\psi) > 0$ for some $p \in \{1, \ldots, k\}$. Then for every $z \in \R^k$ and $\rho, \eps > 0$,
\[ \frac{\Leb(\{ \alpha \in B_m(0, \rho) : \|\psi(\alpha) + z \| \leq \eps \})}{\Leb (B_m(0, \rho))} \leq C \Big(\frac{\eps}{\sigma_p(\psi) \, \rho}\Big)^p, \]
where $C > 0$ depends only on $m,k$.
\end{lem}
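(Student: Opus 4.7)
The plan is to reduce to the diagonal case by the singular value decomposition and then use a simple product estimate.

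First, I would write $\psi = U \Sigma V$, where $U$ is a $k \times k$ orthogonal matrix, $V$ is an $m \times m$ orthogonal matrix, and $\Sigma$ is the $k \times m$ diagonal matrix with entries $\sigma_1(\psi) \ge \sigma_2(\psi) \ge \cdots \ge \sigma_{\min(m,k)}(\psi) \ge 0$ down the diagonal. Substituting $\beta = V\alpha$ leaves both $B_m(0,\rho)$ and $\Leb$ invariant (as $V$ is orthogonal), and composing with $U^T$ on the outside gives
\[
\|\psi(\alpha) + z\| \;=\; \|U\Sigma V\alpha + z\| \;=\; \|\Sigma\beta - w\|,
\qquad w := -U^T z.
\]
So it suffices to bound the Lebesgue measure of
\[
E \;:=\; \{\beta \in B_m(0,\rho) : \|\Sigma\beta - w\| \le \eps\}.
\]

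Now the condition $\|\Sigma\beta - w\| \le \eps$ implies, for each $i \in \{1,\dots,p\}$, that $|\sigma_i(\psi)\,\beta_i - w_i| \le \eps$. Since $\sigma_i(\psi) \ge \sigma_p(\psi) > 0$, this gives
\[
\biggl|\beta_i - \frac{w_i}{\sigma_i(\psi)}\biggr| \;\le\; \frac{\eps}{\sigma_p(\psi)}, \qquad i = 1,\ldots,p.
\]
Hence $E$ is contained in the product of $p$ intervals of length $2\eps/\sigma_p(\psi)$ in the first $p$ coordinates and the projection of $B_m(0,\rho)$ onto the last $m-p$ coordinates, which is contained in $B_{m-p}(0,\rho)$. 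By Fubini,
\[
\Leb(E) \;\le\; \left(\frac{2\eps}{\sigma_p(\psi)}\right)^{\!p} \Leb(B_{m-p}(0,\rho)) \;=\; 2^p\, \omega_{m-p}\, \rho^{m-p}\, \left(\frac{\eps}{\sigma_p(\psi)}\right)^{\!p},
\]
where $\omega_j$ denotes the volume of the unit $j$-ball (with $\omega_0 := 1$). Dividing by $\Leb(B_m(0,\rho)) = \omega_m \rho^m$ yields the claim with $C = 2^p \omega_{m-p}/\omega_m$, which depends only on $m$ and $p \le k$.

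There is no real obstacle; the only thing to be careful about is setting up the SVD with the correct matrix dimensions so that the orthogonal change of variables preserves both the ball and Lebesgue measure, and ensuring $p \le m$, which is automatic since $\sigma_p(\psi) > 0$ forces $p \le \min(m,k)$.
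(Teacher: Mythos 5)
Your proof is correct and is essentially the standard argument found in the cited references (Sauer--Yorke--Casdagli and Robinson): diagonalize $\psi$ by the singular value decomposition, use orthogonal invariance of the ball and of Lebesgue measure to reduce to a diagonal matrix, and then observe that the first $p$ coordinates are trapped in intervals of length $2\eps/\sigma_p(\psi)$ while the remaining $m-p$ coordinates still lie in a ball of radius $\rho$. One minor point: your constant $C = 2^p\omega_{m-p}/\omega_m$ still depends on $p$, whereas the lemma asks for $C$ depending only on $m,k$; since $1 \le p \le \min(m,k)$, you should finish by taking $C = \max_{1\le p\le \min(m,k)} 2^p\omega_{m-p}/\omega_m$, which is a constant in $m$ and $k$ only.
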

\subsection*{Prevalence}

Below we present the definition of prevalence -- a notion introduced by Hunt, Sauer and Yorke in \cite{Prevalence92} (an earlier version of this notion appeared in \cite{Christensen73}, see also \cite{BYbook00}), which may be considered as an analogue of `Lebesgue almost sure' condition in infinitely dimensional normed linear spaces.

\begin{defn}\label{defn:preval} By a (complete) \emph{linear metric} in a linear space we mean a (complete) metric which makes the addition and scalar multiplication continuous. Let $V$ be a complete linear metric space (i.e.~a linear space with a complete linear metric). A Borel set $\mathcal S \subset V$ is called \emph{prevalent} if there exists a Borel measure $\nu$ in $V$, which is positive and finite on some compact set in $V$, such that for every $v \in V$, we have $v + e \in \mathcal S$ for $\nu$-almost every $e \in V$. A non-Borel subset of $V$ is prevalent if it contains a prevalent Borel subset. For more information on prevalence we refer to \cite{Prevalence92} and \cite[Chapter~5]{Rob11}.
\end{defn} 

Let us now describe prevalence in the spaces of (locally) Lipschitz and H\"older observables. Consider $X \subset \R^N$ and $\beta \in (0,1]$. Recall that a function $h\colon X \to \R$ is $\beta$-\emph{H\"older} if there exists $c > 0$ such that $|h(x) - h(y)| \le c \|x-y\|^\beta$ for every $x, y \in X$. 
We say that a function $h \colon X \to \R$ is \emph{locally} $\beta$-\emph{H\"older} on $X$, if for every $x \in X$ there exists an open neighbourhood $U$ of $x$ such that $h|_{U \cap X}$ is $\beta$-H\"older. We denote the space of $\beta$-H\"older (resp.~locally $\beta$-H\"older) functions on $X$ by $H^\beta(X)$ (resp.~$H^\beta_{loc}(X)$). A function $h\colon X \to \R$ is \emph{Lipschitz} (resp.~\emph{locally Lipschitz}) if it is $1$-H\"older (resp.~locally $1$-H\"older). 
The space of Lipschitz (resp.~locally Lipschitz) functions on $X$ is denoted by $\Lip(X)$ (resp.~$\Lip_{loc}(X))$.

For $X \subset \R^N$, $\beta \in (0,1]$ and $h\colon X \to \R$ let 
\[ \| h\|_{\beta, X} = \sup \limits_{x \in X} |h(x)| + \sup \limits_{\substack{x, y \in X \\ x \neq y}} \frac{|h(x) - h(y)|}{\|x-y\|^\beta}\]
(where $\|h\|_{\beta, X}$ can be infinite). 

Suppose $X$ is bounded. Then
\[
H^\beta(X) = \left\{ h \colon X \to \R : \|h\|_{\beta, X} < \infty \right\}.
\]
In this case, it is a standard fact that $H^\beta(X)$ endowed with the $\beta$-\emph{H\"older norm} $\| \cdot \|_{H^\beta(X)} = \| \cdot \|_{\beta, X}$ is a Banach space (in particular, a complete linear metric space). We also set $\|\cdot\|_{\Lip(X)} = \| \cdot \|_{H^1(X)}$ to be the \emph{Lipschitz norm} on $\Lip(X)$. 

Consider now the space $H^\beta_{loc}(X)$ for a (possibly unbounded) set $X \subset \R^N$. Clearly, $H^\beta_{loc}(X)$ is a linear space. To introduce a linear metric in $H^\beta_{loc}(X)$, fix a countable basis $\mB = \{ U_1, U_2, \ldots \}$ of the standard topology in $\R^N$, such that $\mB$ consists of bounded sets. For functions $h, \tilde h \in H^\beta_{loc}(X)$ we define
\[ d_{\beta, X}(h, \tilde h) = \sum \limits_{n=1}^\infty 2^{-n} \min \big\{1, \|h - \tilde h\|_{\beta, U_n \cap X} \big\}\]
(note that it might happen that $\|h - \tilde h\|_{\beta, U_n \cap X}$ is infinite). It is not difficult to check that $d_{\beta, X}(\cdot, \cdot)$ defines a complete linear metric in $H^\beta_{loc}(X)$. In particular, $d_{1, X}(\cdot, \cdot)$ is a complete linear metric in $\Lip_{loc}(X)$.

Let us emphasize that even if $X$ is bounded, it might happen $H^\beta(X) \neq H^\beta_{loc}(X)$. However, if $X$ is compact, then $H^\beta(X) = H^\beta_{loc}(X)$, with $\| \cdot \|_{H^\beta(X)}$ and $d_{\beta, X}(\cdot, \cdot)$ generating the same topology. 


In this paper we use the notion of prevalence in the sense of Definition~\ref{defn:preval} applied suitably to the spaces $V = H^\beta(X)$ or  $V = H^\beta_{loc}(X)$. 

\begin{rem}\label{rem:probe set} 
Similarly as in \cite{SYC91, Rob11, BGS22}, to prove prevalence of appropriate sets of observables, we check the following condition, which is sufficient for prevalence. Let $\{h_1, \ldots, h_m\}$, $m \in \N$, be a finite set of functions in $V = H^\beta(X)$ or $V = H^\beta_{loc}(X)$, called the \emph{probe set}. Define $\xi \colon \R^m \to V$ by $\xi(\alpha_1, \ldots, \alpha_m) = \sum_{j=1}^m \alpha_j h_j$. Then $\nu = \xi_*\Leb$ (cf.~Definition~\ref{defn:support}), where $\Leb$ is the Lebesgue measure in $\R^k$, is a Borel measure in $V$, which is positive and finite on the compact set $\xi(\overline{B_m}(0,1))$. For this measure, a sufficient condition for a set $\mathcal S \subset V$ to be prevalent is that for every $h \in V$, the function $h + \sum_{j=1}^m \alpha_j h_j$ is in $\mathcal S$ for Lebesgue-almost every $(\alpha_1, \ldots, \alpha_m) \in \R^m$. In this case, we say that $\mathcal S$ is \emph{prevalent} in $V$ \emph{with the probe set} $\{h_1, \ldots, h_m\}$. Following \cite{BGS20,BGS22}, as probe sets we take suitable interpolating families of functions in $V$, defined below.
\end{rem}

\begin{defn}\label{defn:interpol}
Let $X$ be a subset of $\R^N$. A family of functions $h_1, \ldots, h_m \colon  X \to \R$ is called a \emph{$k$-interpolating family} in $X$, if for every collection of distinct points $x_1, \ldots, x_k \in X$ and every $y = (y_1, \ldots, y_k) \in \R^k$ there exists $(\alpha_1, \ldots, \alpha_m) \in \R^m$ such that 
$\alpha_1 h_1(x_i) + \cdots + \alpha_m h_m(x_i) = y_i$
for each $i=1, \ldots, k$. In other words, the matrix
\[ \begin{bmatrix} h_1(x_1) & \ldots & h_m(x_1) \\
\vdots &\ddots & \vdots \\
h_1(x_k) & \ldots & h_m(x_k)
\end{bmatrix} \]
has the maximal rank. Note that the same is true for any collection of $\ell$ distinct points with $\ell \leq k$.
\end{defn}
\begin{rem}\label{rem:poly_interp} It is known that any linear basis $\{h_1, \ldots, h_m\}$ of the space of real polynomials of $N$ variables of degree at most $k-1$ is a $k$-interpolating family in $\R^N$ (see e.g.~\cite[Section~1.2, eq.~(1.9)]{poly-interpolation}). Obviously, the functions from this family are locally Lipschitz on any set $X \subset \R^N$, in particular they belong to $H^\beta_{loc}(X)$ for $\beta \in (0, 1]$.
\end{rem} 

\subsection*{Dimensions and measures}

\begin{defn}\label{defn:dim}

For $s>0$, the \emph{$s$-dimensional $($outer$)$ Hausdorff measure} of a set $X \subset \R^N$ is defined as
\[ \mH^s(X) = \lim \limits_{\delta \to 0}\ \inf \Big\{ \sum \limits_{i = 1}^{\infty} |U_i|^s : X \subset \bigcup \limits_{i=1}^{\infty} U_i,\ |U_i| \leq \delta  \Big\}.\]
The \emph{Hausdorff dimension} of $X$ is given as
\[ \hdim X = \inf \{ s > 0 : \mathcal{H}^s(X) = 0 \} = \sup \{ s > 0 : \mathcal{H}^s(X) = \infty \}. \]
For a bounded set $X \subset \R^N$ and $\delta>0$, let $N(X, \delta)$ denote the minimal number of balls of diameter at most $\delta$ required to cover $X$. The \emph{lower} and \emph{upper box-counting $($Minkowski$)$ dimension} of $X$ are defined, respectively, as
\[ \lbdim X = \liminf \limits_{\delta \to 0} \frac{\log N(X,\delta)}{-\log \delta,}\qquad \text{ and }\quad \udim X = \limsup \limits_{\delta \to 0} \frac{\log N(X,\delta)}{-\log \delta}. \]
The lower (resp.~upper) box-counting dimension of an unbounded set is defined as the supremum of the lower (resp.~upper) box-counting dimensions of its bounded subsets. By the separability of $\R^N$, one may always assume that the supremum is over a countable number of bounded subsets.

Using this together with the fact $\hdim \big(\bigcup_{i=1}^\infty X_i\big)=\sup_{i \ge 1} \hdim X_i$, one shows 
\begin{equation}\label{eq:hdim_mbdim_bdim}
\hdim X \leq \lbdim X \leq \udim X
\end{equation}
for every $X \subset \R^N$. 

Recall that for a measure $\mu$ on a set $X$ and a measurable set $Y \subset X$ we say that $Y$ is of \emph{full $\mu$-measure}, if $\mu(X\setminus Y) = 0$. We define dimension of a finite Borel measure $\mu$ in $\R^N$ as
\[
\dim \mu = \inf\{ \dim X: X \subset \R^N \text{ is a Borel set of full $\mu$-measure} \}.
\]
Here $\dim$ may denote any type of the dimensions defined above. 

For a Borel probability measure $\mu$ in $\R^N$ with a compact support define its \emph{lower} and \emph{upper information dimension} as
\[ \lid(\mu) = \liminf \limits_{\eps \to 0} \int \limits_{\supp \mu} \frac{\log \mu(B(x,\eps))}{\log \eps} d\mu(x), \qquad\uid(\mu) = \limsup \limits_{\eps \to 0} \int \limits_{\supp \mu} \frac{\log \mu(B(x,\eps))}{\log \eps} d\mu(x).\]
If $\lid(\mu) = \uid(\mu)$, then we denote their common value by $\idim (\mu)$ and call the \emph{information dimension} of $\mu$. 
For more information on dimension theory in Euclidean spaces see \cite{falconer2014fractal, mattila, Rob11}. 
\end{defn}

\begin{defn}\label{defn:support}
The \emph{support} of a Borel measure $\mu$ in a metric space, denoted $\supp \mu$, is the smallest closed set of full $\mu$-measure. Note that the measure of any open ball centred at a point $x \in \supp \mu$ is positive (or infinite). For a Borel map $\phi$, by $\phi_*\mu$ we denote the \emph{push-forward} of $\mu$ under $\phi$, defined by $\phi_*\mu(E) = \mu(\phi^{-1}(E))$ for Borel sets $E$. We say that measures $\mu$ and $\nu$ are \emph{mutually singular} if there exists a measurable set $X$ of full $\mu$-measure such that $\nu(X) = 0$. We denote this fact by $\mu \perp \nu$.
\end{defn}

\begin{defn}\label{defn:ergodic}
Let $\mu$ be a Borel measure on a Borel set $X$ and let $T\colon X \to X$ be a Borel map. The measure $\mu$ is $T$-\emph{invariant}, if $\mu(T^{-1}(E)) = \mu(E)$ for every Borel set $E \subset X$. The measure $\mu$ is \emph{ergodic}, if for every Borel set $E \subset X$, the condition $T^{-1}(E) = E$ implies $\mu(E) = 0$ or $\mu(X \setminus E) = 0$.
\end{defn}

\begin{defn}[{\bf Natural measure}{}]\label{defn:nat_measure}
Let $M$ be a compact Riemannian manifold and $T \colon M \to M$ a smooth diffeomorphism. A compact $T$-invariant set $X \subset M$ is called an \emph{attractor}, if there exists an open set $B \subset M$ containing $X$, such that $\lim_{n \to \infty} \dist(T^n x, X) = 0$ for every $x \in B$. The largest such  set $B(X)$ is called the (\emph{maximal}) \emph{basin of attraction} of $X$. A $T$-invariant Borel probability measure $\mu$ on $X$ is called a \emph{natural measure} if 
	\[ \lim \limits_{n \to \infty} \frac{1}{n} \sum \limits_{i=0}^{n-1} \delta_{T^i x} = \mu \]
for almost every $x \in B(X)$ with respect to the volume measure on $M$, where $\delta_y$ denotes the Dirac measure at $y$ and the limit is taken in the weak-$^*$ topology.
\end{defn}

\section{Combinatorics of orbits}\label{sec:combin}

Let $X \subset \R^N$ be an arbitrary set and let $T\colon X \to X$ be a transformation. Fix $k \in \N$ and let $h_1, \ldots, h_m \colon X \to \R$ be a $2k$-interpolating family in $X$, according to Definition~\ref{defn:interpol}. Consider an observable $h \colon X \to \R$. For $\alpha = (\alpha_1, \ldots, \alpha_m) \in \R^m$ let $h_\alpha \colon X \to \R$ be given by 
\[
h_\alpha = h + \sum \limits_{j=1}^m \alpha_j h_j.
\]
Let
\[
\phi^T_{\alpha,k} \colon X \to \R^k, \qquad \phi^T_{\alpha,k}(x) = (h_\alpha(x), h_\alpha(Tx), \ldots, h_\alpha(T^{k-1}x)) \]
be the $k$-delay coordinate map corresponding to $h_\alpha$ (here it is useful to include the dependence on $T$ and $k$ in the notation). In this section we study the combinatorics of the orbits of points of $X$ in relation to the properties of $\phi^T_{\alpha,k}$.

Note that for $x,y \in X$ we have
\begin{equation}\label{e:matrix_form} \phi^T_{\alpha,k}(x) - \phi^T_{\alpha,k}(y) = D_{x,y}\alpha + w_{x,y}
\end{equation}
for a $k\times m$ matrix $D_{x,y} = D_{x,y}(T, k, h)$ defined by
\begin{equation}\label{eq:D_xy}
D_{x,y} = \begin{bmatrix} h_1(x) - h_1(y) & \ldots & h_m(x) - h_m(y) \\
h_1(Tx) - h_1(Ty) & \ldots & h_m(Tx) - h_m(Ty) \\
\vdots & \ddots & \vdots \\
h_1(T^{k-1}x) - h_1(T^{k-1}y) & \ldots & h_m(T^{k-1}x) - h_m(T^{k-1}y) \\
\end{bmatrix} 
\end{equation}
and
\[
w_{x,y} = \begin{bmatrix} h(x) - h(y) \\
h(Tx) - h(Ty)\\
\vdots \\
h(T^{k-1} x) - h(T^{k-1}y) \end{bmatrix}.
\]
The aim of this section is to prove the following proposition, which provides a basic technical tool used in the proofs of the main results of this paper. Note that as the proof of the proposition is purely combinatorial, we do not assume any regularity on the dynamics $T$ and observables $h, h_1, \ldots, h_m$, except of the condition that $\{h_1, \ldots, h_m\}$ is a $2k$-interpolating family in $X$.

\begin{prop}
\label{prop: rank or predict} For every $x,y \in X$, if $\rank D_{x,y} < k$, then
\[
\|\phi^T_{\alpha,k}(Tx) - \phi^T_{\alpha,k}(Ty)\| \leq 2k \| \phi^T_{\alpha,k}(x) - \phi^T_{\alpha,k}(y) \| \quad \text{for every }\alpha \in \R^m.
\]
\end{prop}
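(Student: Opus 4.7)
Setting $u_i = h_\alpha(T^i x) - h_\alpha(T^i y)$ for $i\ge 0$, we have $\phi^T_{\alpha,k}(x) - \phi^T_{\alpha,k}(y) = (u_0,\ldots,u_{k-1})$ and $\phi^T_{\alpha,k}(Tx) - \phi^T_{\alpha,k}(Ty) = (u_1,\ldots,u_k)$, so the conclusion reduces to bounding $|u_k|$ by a constant multiple of $\|(u_0,\ldots,u_{k-1})\|$. I plan to encode the orbit combinatorics in a multigraph $G$ on the vertex set $S = \{T^i x, T^i y : 0 \le i \le k-1\}$ (so $|S| \le 2k$), with edges $e_i = \{T^i x, T^i y\}$ for $i = 0,\ldots,k-1$ (a loop when the two endpoints agree). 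Writing $F = (h_1,\ldots,h_m) \colon X \to \R^m$, $M_S$ for the $|S| \times m$ matrix with rows $F(s)$, $s \in S$, and $B$ for the signed $k \times |S|$ incidence matrix of $G$, one factors $D_{x,y} = B \, M_S$. Because $|S| \le 2k$, the $2k$-interpolating property of $\{h_1,\ldots,h_m\}$ forces $\rank M_S = |S|$, so $\rank D_{x,y} = \rank B = |S| - c(G)$ by the standard formula for the rank of a signed incidence matrix.

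The hypothesis $\rank D_{x,y} < k$ thus splits into: (i) $G$ has a loop, i.e.\ $T^{i_0} x = T^{i_0} y$ for some $i_0 \le k-1$; or (ii) $G$ has no loops but contains a cycle. Case (i) is easy: the loop propagates forward, giving $T^j x = T^j y$ for all $j \ge i_0$, whence $u_j = 0$ for $j \ge i_0$ and $\|\phi^T_{\alpha,k}(Tx) - \phi^T_{\alpha,k}(Ty)\| \le \|\phi^T_{\alpha,k}(x) - \phi^T_{\alpha,k}(y)\|$. In case (ii), the sub-case $T^k x = T^k y$ again gives $u_k = 0$ and we are done, so I may additionally assume $T^k x \neq T^k y$.

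The technical heart is then to prove, under (ii) with $T^k x \neq T^k y$, that $T^k x, T^k y \in S$ and lie in a common component of $G$. Fix any cycle in $G$ with distinct edges $e_{i_1}, \ldots, e_{i_r}$ and closed walk $v_0, v_1, \ldots, v_r = v_0$, and introduce $\eta_s \in \{x,y\}$ so that $v_{s-1} = T^{i_s}\eta_s$ and $v_s = T^{i_s}\bar\eta_s$. Since the $i_s$ are distinct, $m_* := \max_s i_s$ is uniquely attained at some $s_*$. The closed-walk conditions at the two endpoints of $e_{m_*}$ read $T^{m_*}\eta_{s_*} = T^{i_{s_*-1}}\bar\eta_{s_*-1}$ and $T^{m_*}\bar\eta_{s_*} = T^{i_{s_*+1}}\eta_{s_*+1}$; applying $T^{k-m_*}$ (which is valid since $k - m_* \ge 1$) and using $i_{s_* \pm 1} < m_*$, both shifted time indices lie in $\{0, \ldots, k-1\}$, so $T^k x, T^k y \in S$. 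For the component claim I consider the \emph{shifted walk} $\tilde v_s := T^{k-m_*} v_s$, whose consecutive pairs $(\tilde v_{s-1}, \tilde v_s)$ are the endpoints of $e_{b_s}$ with $b_s = k - m_* + i_s$: for $s \neq s_*$ one has $b_s \le k-1$ so $e_{b_s}$ is an actual edge of $G$, while $b_{s_*} = k$ corresponds to the hypothetical edge $\{T^k x, T^k y\} \notin G$. Deleting this single edge turns the shifted cycle into a path in $G$ connecting $\tilde v_{s_*-1} = T^k \eta_{s_*}$ to $\tilde v_{s_*} = T^k\bar\eta_{s_*}$, using at most $r - 1 \le k-1$ edges of $G$.

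With $\{T^k x, T^k y\} \subset S$ and $|S| \le 2k$, a second application of $2k$-interpolation upgrades the incidence-level identity $F(T^k x) - F(T^k y) = \sum_{s \neq s_*} \epsilon_s R_{b_s}$ (where $R_i$ denotes the $i$-th row of $D_{x,y}$ and $\epsilon_s \in \{\pm 1\}$ is dictated by the path direction) to the numerical identity $u_k = \sum_{s \neq s_*} \epsilon_s u_{b_s}$, valid for the arbitrary observable $h_\alpha$. Cauchy--Schwarz then gives $|u_k| \le \sqrt{r-1}\,\|(u_0, \ldots, u_{k-1})\| \le \sqrt{k-1}\,\|(u_0, \ldots, u_{k-1})\|$, so $\|\phi^T_{\alpha,k}(Tx) - \phi^T_{\alpha,k}(Ty)\|^2 \le k\, \|\phi^T_{\alpha,k}(x) - \phi^T_{\alpha,k}(y)\|^2$, comfortably within the claimed factor $(2k)^2$. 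I expect the shifted-cycle argument in case (ii) to be the main obstacle: a priori it is unclear why a cycle present in $G$ should witness the connectivity of the pair $(T^k x, T^k y)$ after the forward shift by $T^{k-m_*}$, and the crucial combinatorial input is that the \emph{unique} maximality of $m_*$ forces every other cycle edge to shift into an index still available in $\{0, \ldots, k-1\}$.
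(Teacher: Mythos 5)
Your proof is correct, and it takes a genuinely different and noticeably cleaner route than the paper's argument. The paper also factors $D_{x,y} = J_{x,y}V_{x,y}$ and works with the graph on $\{T^i x, T^i y\}$, but it then proceeds via a long case analysis on orbit types (one orbit long; both orbits short with compatible pre-period and period; both orbits very short; $x$ periodic plus complementary conditions; and finally a reduction for the case where both $x$ and $y$ are strictly pre-periodic). Your argument replaces all of that with a single structural dichotomy: $\rank D_{x,y}=\rank B=|S|-c(G)$, and since $G$ has exactly $k$ edges, $\rank B<k$ iff $G$ has a loop or a cycle. The loop case is trivial by forward propagation, and the cycle case is settled by the ``shift by $T^{k-m_*}$'' idea: uniqueness of the maximal cycle-edge index $m_*$ forces every other shifted index $b_s=k-m_*+i_s$ into $\{0,\dots,k-1\}$, so the shifted cycle minus its top edge is a genuine path from $T^k x$ to $T^k y$ using at most $r-1\le k-1$ distinct edges. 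The paper's Case~1 contains the germ of this shift idea but only deploys it when one orbit is long; you show it works universally and thereby collapse Cases~2--4 and the pre-periodic reduction entirely. As a bonus you obtain the constant $\sqrt{k}$ rather than $2k$.

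One small imprecision in the write-up: you say a ``second application of $2k$-interpolation'' upgrades the $F$-level identity $F(T^kx)-F(T^ky)=\sum_{s\neq s_*}\epsilon_s R_{b_s}$ to the numerical identity $u_k=\sum_{s\neq s_*}\epsilon_s u_{b_s}$. That detour is unnecessary: the latter identity is an immediate telescoping along the shifted path,
\[
h_\alpha(\tilde v_{s_*-1})-h_\alpha(\tilde v_{s_*})=\sum_{s\neq s_*}\bigl(h_\alpha(\tilde v_{s-1})-h_\alpha(\tilde v_s)\bigr),
\]
valid for \emph{any} function $h_\alpha$. The $2k$-interpolation hypothesis is used exactly once in the proof, to guarantee $\rank M_S=|S|$ and hence $\rank D_{x,y}=\rank B$. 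This does not affect correctness, but stating the telescoping directly would make the argument even tighter.
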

This proposition together with Lemma~\ref{lem:rank=singular} implies immediately the following corollary, which is a key result used in this paper.

\begin{cor}\label{cor:crucial}
If $x,y \in X$ and $\alpha \in \R^m$ satisfy
\[
\|\phi^T_{\alpha,k}(Tx) - \phi^T_{\alpha,k}(Ty)\| > 2k \| \phi^T_{\alpha,k}(x) - \phi^T_{\alpha,k}(y) \|,
\]
then $\sigma_k(D_{x,y}) > 0$. In particular, 
\[
\text{if} \quad \phi^T_{\alpha,k}(x) = \phi^T_{\alpha,k}(y) \quad \text{and} \quad \phi^T_{\alpha,k}(Tx) \neq \phi^T_{\alpha,k}(Ty), \quad \text{then} \quad \sigma_k(D_{x,y}) > 0.
\]
\end{cor}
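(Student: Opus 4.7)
The argument is purely combinatorial and passes through the finite space of signed measures on the orbit union. Write $P = \{T^i x, T^i y : 0 \le i \le k-1\}$ and $P' = P \cup \{T^k x, T^k y\}$, and for $0 \le i \le k$ set $v_i = \delta_{T^i x} - \delta_{T^i y} \in \R^{P'}$. The linear map $\Psi \colon \R^{P'} \to \R^m$ defined by $\Psi(\delta_p) = (h_1(p), \ldots, h_m(p))$ carries $v_i$ to the $i$-th row $R_i$ of $D_{x,y}$; provided $|P'| \le 2k$, the $2k$-interpolating property makes $\Psi|_{\R^{P'}}$ injective, so rank equalities and linear relations match between $\R^{P'}$ and the row space of $D_{x,y}$ in $\R^m$. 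I first check that $|P'| \le 2k$: if the $2k$ multiset elements $\{T^i x, T^i y : 0 \le i \le k-1\}$ were all distinct, each $v_i$ would use its own pair of basis vectors of $\R^{P'}$ and the $v_i$ would be independent, forcing $\rank D_{x,y} = k$. Hence $|P| \le 2k-1$. If $|P| \le 2k-2$ then $|P'| \le 2k$. If $|P| = 2k-1$ the multiset contains a single coincidence, either of self type $T^i z = T^j z$ for $z \in \{x,y\}$ and $i<j$ (so $z$ enters a cycle and $T^k z \in P$) or of cross type $T^i x = T^j y$; counting the forced multiset identifications gives $\max(i,j) = k-1$, and a single iterate of $T$ puts $T^k y$ (or $T^k x$, or both, with $T^k x = T^k y$) into $P$. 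In every subcase $|P'| \le |P|+1 \le 2k$.

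Next I show by induction on $k$ that $v_k \in \mathrm{span}(v_0, \ldots, v_{k-1})$ in $\R^{P'}$ whenever $\rank D_{x,y} < k$. The base $k=1$ is immediate: $\rank D_{x,y} = 0$ gives $R_0=0$, so $x=y$ and $v_1=0$. For the inductive step, let $C \ne \{0\}$ be the left kernel of $D_{x,y}$, i.e.~the space of $c = (c_0, \ldots, c_{k-1}) \in \R^k$ with $\sum c_i R_i = \sum c_i v_i = 0$. If some $c \in C$ has $c_{k-1} \ne 0$, then applying the push-forward $T_*$ on signed measures to $\sum c_i v_i = 0$ yields $\sum c_i v_{i+1} = 0$, whence $v_k = -\sum_{j=0}^{k-2}(c_j/c_{k-1})\,v_{j+1} \in \mathrm{span}(v_1, \ldots, v_{k-1})$. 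If instead every $c \in C$ has $c_{k-1} = 0$, then $v_{k-1}$ is linearly independent of $v_0, \ldots, v_{k-2}$, so $\rank(v_0,\ldots,v_{k-2}) < k-1$; but the inductive hypothesis then places $v_{k-1} \in \mathrm{span}(v_0,\ldots,v_{k-2})$, a contradiction. I expect this dichotomy to be the most delicate step, since the shifted relation controls $v_k$ only through coefficients with nonzero last coordinate, and it is the presence of the dynamics (not merely rank deficiency) that forces this.

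To quantify the expression of $v_k$, view the nonzero $v_i$ as oriented edges of a multigraph $G_0$ on vertex set $P'$, the edge $v_i$ joining $T^i y$ to $T^i x$. The inclusion $v_k \in \mathrm{span}(v_0,\ldots,v_{k-1})$ is equivalent to $T^k x$ and $T^k y$ lying in a common connected component of $G_0$, so there is a simple path between them of length $\ell \le |P'|-1 \le 2k-1$; summing signed edge-differences along this path yields $v_k = \sum_{i=0}^{k-1} \beta_i v_i$ with $\beta_i \in \{-1,0,1\}$ and $\sum_i |\beta_i| \le 2k-1$. Applying $\Psi$ converts this to an identity on the rows of $D_{x,y}$, and interpolating $h|_{P'}$ by some $\tilde h = \sum \tilde\alpha_j h_j$ (possible since $|P'| \le 2k$) and expanding $h_\alpha = h + \sum \alpha_j h_j$ linearly yields $a_k = \sum_{i=0}^{k-1} \beta_i a_i$ for every $\alpha \in \R^m$, where $a_i = h_\alpha(T^i x) - h_\alpha(T^i y)$ so that $\phi^T_{\alpha,k}(x) - \phi^T_{\alpha,k}(y) = (a_0,\ldots,a_{k-1})$ and $\phi^T_{\alpha,k}(Tx) - \phi^T_{\alpha,k}(Ty) = (a_1,\ldots,a_k)$. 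Then $|a_k| \le (2k-1)\max_i |a_i| \le (2k-1)\|(a_0,\ldots,a_{k-1})\|$, and therefore
\[
\|(a_1,\ldots,a_k)\|^2 \le \|(a_0,\ldots,a_{k-1})\|^2 + a_k^2 \le \bigl(1 + (2k-1)^2\bigr)\|(a_0,\ldots,a_{k-1})\|^2 \le (2k)^2 \|(a_0,\ldots,a_{k-1})\|^2,
\]
which gives the asserted inequality.
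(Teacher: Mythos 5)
Your proof is correct, and it takes a genuinely different route from the paper's. The paper establishes Proposition~3.1 (from which the corollary follows via $\rank D_{x,y}=\rank J_{x,y}$ and Lemma~2.1) through an elaborate case analysis on the combinatorics of orbits: four cases depending on whether an orbit has length $\ge k$, whether cycle lengths divide each other, whether one point is periodic, plus a separate reduction of the pre-periodic/pre-periodic case, each handled with bespoke row/column manipulations on $J_{x,y}$ and an auxiliary graph lemma (Lemma~3.2). Your argument reaches the same conclusion by a single unified induction on $k$. You pass to the signed measures $v_i = \delta_{T^ix}-\delta_{T^iy}$ (the map $\Psi$ being injective on $\R^{P'}$ once $|P'|\le 2k$, which you correctly derive from $\rank D_{x,y}<k$ forcing a coincidence in the multiset and a short case check); then the dichotomy on whether some left-kernel vector has $c_{k-1}\ne 0$ is resolved either by pushing forward a vanishing relation under $T_*$ (yielding $v_k\in\mathrm{span}(v_1,\ldots,v_{k-1})$) or by extracting a rank drop that contradicts the inductive hypothesis at level $k-1$. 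The push-forward step is the key mechanism by which the dynamics enters, and it cleanly replaces the paper's separate treatments of cyclic structure. Finally, interpreting $\mathrm{span}(v_0,\ldots,v_{k-1})$ as the image of the graph boundary operator on $G_0$ converts membership of $v_k$ into the existence of a simple path of length $\le |P'|-1\le 2k-1$, and interpolating $h$ on $P'$ by the family transfers the identity $v_k=\sum\beta_i v_i$ to the scalar identity $a_k=\sum\beta_i a_i$ uniformly in $\alpha$. Both proofs ultimately rest on a path argument, but your Case~A/Case~B induction via $T_*$ collapses the paper's orbit-type casework into a few lines; the trade-off is that the bookkeeping of the induction (which level-$(k-1)$ statement is being invoked, and why $\Psi$ remains injective there) needs to be spelled out carefully, and you should state explicitly at the outset that if $T^ix=T^iy$ for some $i\le k-1$ the inequality is immediate, so one may assume $T^ix\ne T^iy$ throughout.
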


The rest of this section is devoted to the proof of Proposition~\ref{prop: rank or predict}.

Fix a pair of points $x,y \in X$ and assume 
\begin{equation}\label{eq:rankD<}
\rank D_{x,y} < k.
\end{equation}
Note that we can assume 
\begin{equation}\label{eq:Tx neq Ty}
T^i x \neq T^i y \quad \text{for } i = 0, \ldots, k-1,
\end{equation}
as otherwise $T^k x = T^k y$ and the lemma holds trivially. Denote the elements of the set $\{x, Tx, \ldots, T^{k-1}x, y, Ty, \ldots, T^{k-1}y \}$ by $z_0, \ldots, z_{\ell-1}$ (without multiplicities), preserving the order within the set, so that
\[\ell = \#\{x, Tx, \ldots, T^{k-1}x, y, Ty, \ldots, T^{k-1}y \}.\]
Under this notation, the matrix $D_{x,y}$ can be written as a product of matrices
\[ D_{x,y} = J_{x,y} V_{x,y}, \]
where 
\[ V_{x,y}
= \begin{bmatrix} h_1(z_0) & \ldots & h_m(z_0) \\
\vdots & \ddots & \vdots \\
h_1(z_{\ell-1}) & \ldots & h_m(z_{\ell-1})
\end{bmatrix} \]
and $J_{x,y} = J_{x,y}(T,k) = [m_{i,j}]_{\substack{0 \leq i \leq k-1\\0 \leq j \leq \ell-1}}$ is a $k \times \ell$ matrix, where
\[m_{i,j} = \begin{cases} 1 &\text{if}\ z_j = T^i x\\
-1 &\text{if}\ z_j = T^i y\\
0 &\text{otherwise}
\end{cases}\]
(note that we enumerate rows and columns starting from $0$). By \eqref{eq:Tx neq Ty}, the entries $m_{i,j}$ are well-defined. Moreover, every row of $J_{x,y}$ consists of a single entry $1$, single entry $-1$ and $\ell-2$ zeros. Since $z_0, \ldots, z_{\ell-1}$ are distinct, $\ell \leq 2k$ and $\{h_1, \ldots, h_m\}$ is a $2k$-interpolating family, the matrix $V_{x,y}$ has the maximal rank. Therefore, $\rank D_{x,y} = \rank J_{x,y}$, so by \eqref{eq:rankD<},
\begin{equation}\label{eq:rankJ<}
\rank J_{x,y} < k.
\end{equation}

Let $\vec{G}$ be a directed graph with the set of vertices 
\[ V = \{ x, Tx, \ldots, T^{k-1}x, y, Ty, \ldots, T^{k-1} y\} = \{z_0, \ldots, z_{\ell-1} \} \]
and directed edges $T^i x \longrightarrow T^i y$, $i=0, \ldots, k-1$. Let $G$ denote the non-directed version of $\vec{G}$, i.e.~a non-directed graph with the same set $V$ of vertices and non-directed edges $T^i x \longleftrightarrow T^i y$, $i=0, \ldots, k-1$. Note that the points $T^k x$, $T^k y$ may be contained in $V$, and the graph may have vertices of degree higher than $1$. The key observation is the following.

\begin{lem}\label{lem:claim}
If $T^k x, T^k y \in V$ and there exists a path in $G$ joining $T^k x$ with $T^k y$, then \[
\| \phi^T_{\alpha,k}(Tx) - \phi^T_{\alpha,k}(Ty)\| \leq k \| \phi^T_{\alpha,k}(x) - \phi^T_{\alpha,k}(y) \|
\]
for every $\alpha \in \R^m$.
\end{lem}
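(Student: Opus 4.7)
The plan is to exploit the hypothesis through a telescoping argument on a suitably chosen simple path in $G$. First I would expand
\[ \|\phi^T_{\alpha,k}(Tx)-\phi^T_{\alpha,k}(Ty)\|^2=\sum_{i=1}^{k} d_i^2, \qquad \|\phi^T_{\alpha,k}(x)-\phi^T_{\alpha,k}(y)\|^2=\sum_{i=0}^{k-1} d_i^2, \]
where $d_i=|h_\alpha(T^ix)-h_\alpha(T^iy)|$. The coordinates with index $i=1,\dots,k-1$ appear in both sums, so they already contribute at most $\|\phi^T_{\alpha,k}(x)-\phi^T_{\alpha,k}(y)\|^2$ to the left-hand expression. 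The whole problem therefore reduces to bounding the single remaining coordinate $d_k=|h_\alpha(T^kx)-h_\alpha(T^ky)|$.

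For this I would invoke the hypothesis to select a \emph{simple} path $T^kx=v_0,v_1,\dots,v_\ell=T^ky$ in $G$ (the existence of any walk gives the existence of a simple one by the standard cycle-removal trimming). Every edge of $G$ is of the form $\{T^jx,T^jy\}$ for some $j\in\{0,\dots,k-1\}$, so for each $i\in\{1,\dots,\ell\}$ the edge $\{v_{i-1},v_i\}$ is realised by some index $j_i$. Consecutive edges of a simple path are distinct as unordered pairs of vertices, hence the indices $j_1,\dots,j_\ell$ can be chosen pairwise distinct in $\{0,\dots,k-1\}$; in particular $\ell\le k$. Telescoping along the path then yields
\[ h_\alpha(T^kx)-h_\alpha(T^ky)=\sum_{i=1}^{\ell}\epsilon_i\bigl(h_\alpha(T^{j_i}x)-h_\alpha(T^{j_i}y)\bigr) \]
for some signs $\epsilon_i\in\{\pm 1\}$, and by the Cauchy--Schwarz inequality together with the distinctness of the $j_i$'s,
\[ d_k^2\le \ell\sum_{i=1}^{\ell} d_{j_i}^2\le k\sum_{j=0}^{k-1} d_j^2=k\,\|\phi^T_{\alpha,k}(x)-\phi^T_{\alpha,k}(y)\|^2. \]

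Combining with the coordinate-wise estimate above gives $\|\phi^T_{\alpha,k}(Tx)-\phi^T_{\alpha,k}(Ty)\|^2\le (k+1)\|\phi^T_{\alpha,k}(x)-\phi^T_{\alpha,k}(y)\|^2$, from which the claimed constant $k$ follows for $k\ge 2$ since $k+1\le k^2$. The case $k=1$ is handled separately and trivially: the only edge of $G$ is $x\leftrightarrow y$, so the hypothesis $Tx,Ty\in V$ with a path forces $\{Tx,Ty\}\subseteq\{x,y\}$ and hence $d_1\le d_0$ directly. I do not foresee a genuine obstacle; the only point that needs care is passing to a \emph{simple} path, as this is what secures the distinctness of the $j_i$'s and consequently the Cauchy--Schwarz gain of $\sqrt{k}$, keeping the final constant linear rather than quadratic in $k$.
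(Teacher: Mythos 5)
Your proof is correct and uses the same core idea as the paper: telescope $h_\alpha(T^kx)-h_\alpha(T^ky)$ along a simple path in $G$ whose edges all come from coordinates of $\phi^T_{\alpha,k}(x)-\phi^T_{\alpha,k}(y)$. The paper's version bounds each path increment by $\|\phi^T_{\alpha,k}(x)-\phi^T_{\alpha,k}(y)\|$ and sums, getting $d_k\le L\|\phi^T_{\alpha,k}(x)-\phi^T_{\alpha,k}(y)\|$; you instead keep track of which coordinates the increments come from and apply Cauchy--Schwarz, getting the sharper $d_k^2\le \ell\,\|\phi^T_{\alpha,k}(x)-\phi^T_{\alpha,k}(y)\|^2$. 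This is a genuine improvement in two respects worth noting: it actually closes a minor imprecision in the paper's proof, which reassembles the Euclidean norm via a $\max$ of coordinate bounds as though it were the sup-norm (the displayed inequality $\|\phi^T_{\alpha,k}(Tx)-\phi^T_{\alpha,k}(Ty)\|\le\max\{\|\phi^T_{\alpha,k}(x)-\phi^T_{\alpha,k}(y)\|,|h_\alpha(T^kx)-h_\alpha(T^ky)|\}$ is false for the $\ell^2$-norm in general); and it makes transparent exactly when the constant $k$ is tight, which is why your separate check of the $k=1$ case is needed and correctly handled. Neither discrepancy affects the rest of the paper, since Proposition~\ref{prop: rank or predict} only needs some constant of order $k$ and uses $2k$.
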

\begin{proof}
Note that if $T^k x = u_0 \longleftrightarrow u_1 \longleftrightarrow \dots \longleftrightarrow u_L = T^k y$ is a path in $G$, then for any $\alpha \in \R^m$ we have 
\[ |h_{\alpha}(u_i) - h_\alpha(u_{i+1})| \leq \| \phi^T_{\alpha,k}(x) - \phi^T_{\alpha,k}(y)\|\quad \text{for } i= 0, \ldots, L-1 \]
as for each $i$ there holds $\{ u_i, u_{i+1} \} = \{ T^j x, T^j y \}$ for some $j \in \{0, \ldots, k-1\}$. Hence, 
\[ |h_\alpha(T^k x) - h_\alpha(T^k y)| \leq \sum \limits_{i=0}^{L-1} |h_\alpha(u_i) - h_\alpha(u_{i+1})| \leq L\| \phi^T_{\alpha,k}(x) - \phi^T_{\alpha,k}(y)\|, \]
so
\begin{align*}
\|\phi^T_{\alpha,k}(Tx) - \phi^T_{\alpha,k}(Ty)\| &\leq \max \big\{\|\phi^T_{\alpha,k}(x) - \phi^T_{\alpha,k}(y)\|, |h_\alpha(T^k x) - h_\alpha(T^k y)|\big\}\\
&\leq L\| \phi^T_{\alpha,k}(x) - \phi^T_{\alpha,k}(y)\|.
\end{align*}
This ends the proof, as the total number of edges in $G$ is at most $k$, hence any two vertices connected by a path are in fact connected by a path of length at most $k$.
\end{proof}

We proceed with the proof of Proposition~\ref{prop: rank or predict}. Recall that we have fixed $x,y \in X$ satisfying \eqref{eq:rankD<} and \eqref{eq:Tx neq Ty}. Let us introduce some notation. For $z \in \{x, y\}$, let $\Orb_T(z) = \{ T^n z : n \geq 0 \}$ be its orbit under $T$. We will often write $\Orb(z)$ instead of $\Orb_T(z)$ if the transformation is clear from the context. We call a point $z$ \emph{periodic} if there exists $p \geq 1$ such that $T^p z = z$, and \emph{pre-periodic} (or \emph{eventually periodic}) if it is not periodic and there exists $n \geq 1$ such that $T^n z$ is periodic. If $z$ is neither periodic nor pre-periodic, it is called \emph{aperiodic}. Note that $z$ is aperiodic if and only if $\Orb(z)$ is infinite. Every orbit $\Orb(z)$ can be uniquely presented as a disjoint union of a pre-periodic part and a cycle (we adopt a convention that for aperiodic $z$, the pre-periodic part is infinite and the cycle is empty). We will denote by $P(z)$ the length of the pre-periodic part and by $C(z)$ the length of the cyclic part. 

We will consider four (not mutually disjoint) cases, which together cover all the possibilities, where at least one of the points $x,y$ is periodic or aperiodic (note that only Case~4 will actually require the assumption of periodicity). Then we will show how to reduce the remaining case (with both $x,y$ being pre-periodic) to the previously established ones.

\subsection*{Case 1.} $\#\Orb(x) \geq k$ or $\#\Orb(y) \geq k$.

\medskip

By symmetry, we can assume $\#\Orb(x) \geq k$. Then $\ell \geq k$ and the matrix $J_{x,y}$ has entries $1$ on the main diagonal.\footnote{By the main diagonal of a matrix we understand the elements of the matrix with coordinates of the form $(i,i)$.} Therefore, the first (from the left) $k \times k$ square submatrix of $J_{x,y}$ has the form $I- A$, where $I$ is the $k \times k$ identity matrix and $A$ is a $k \times k$ matrix $[a_{i,j}]_{\substack{0 \leq i \leq k-1\\0 \leq j \leq k-1}}$ defined by
\[ a_{i,j} = \begin{cases} 1 &\text{if}\ z_j = T^i y\\
0 &\text{otherwise}
\end{cases}.\]
As $z_i = T^i x$ for $0 \leq i \leq k-1$, we see that $A$ is the adjacency matrix of a subgraph of $\vec{G}$, obtained by restricting the set of vertices to $\{x, Tx, \ldots, T^{k-1}x \}$ and including only those edges $T^i x \to T^i y$, $0 \leq i \leq k-1$, for which $T^i y \in \{x, Tx, \ldots, T^{k-1}x \}$. If $A$ is nilpotent, then $I - A$ is invertible, hence $\rank J_{x,y} = k$, which contradicts \eqref{eq:rankJ<}. Therefore, we can assume that $A$ is not nilpotent, so the graph described by $A$ contains a cycle. Consequently, $\vec{G}$ contains a cycle. We will prove that this implies $T^k x, T^k y \in V$ and the existence of a (directed) path from $T^k y$ to $T^k x$ in $\vec{G}$. Then there is a path from $T^k x$ to $T^k y$ in $G$ and the assertion of Proposition~\ref{prop: rank or predict} holds by Lemma~\ref{lem:claim} (with constant $k$ instead of $2k$).

To show that there is a path from $T^k y$ to $T^k x$ in $\vec{G}$, note first that any cycle in $\vec{G}$ is of the form
\begin{equation}\label{eq:G general cycle} T^{i_1} x \longrightarrow T^{i_1} y = T^{i_2} x \longrightarrow T^{i_2} y = T^{i_3} x \longrightarrow \cdots \longrightarrow T^{i_s} y = T^{i_1} x
\end{equation}
for some $i_1, \ldots, i_s \in \{0, \ldots, k-1\}$. If $i_j < k - 1$ for each $j = 1, \ldots, s$, then the following path (the image of the path \eqref{eq:G general cycle} under $T$)
\[ T^{i_1 + 1} x \longrightarrow T^{i_1 + 1} y = T^{i_2 + 1} x \longrightarrow T^{i_2 + 1} y = T^{i_3 + 1} x \longrightarrow \cdots \longrightarrow T^{i_s + 1} y = T^{i_1 + 1} x \]
also describes a cycle in $\vec{G}$. Therefore, there exists a cycle in $\vec{G}$ containing an edge $T^{k-1} x \longrightarrow T^{k-1} y$, and hence there is a path in $\vec{G}$ of the form
\[ T^{k-1}y = T^{i_1} x \longrightarrow T^{i_1} y = T^{i_2} x \longrightarrow \cdots \longrightarrow T^{i_s} y = T^{k-1} x,  \]
where $i_1, \ldots, i_s \in \{0, \ldots, k-1\}$. In fact, $i_1, i_s < k-1$ since $T^{k-1}x \neq T^{k-1} y$ and all others indices can be assumed to be smaller than $k-1$, as otherwise we can consider a shorter path. Therefore, $T^k x, T^k y \in V$ and
\[ T^k y = T^{i_1 + 1} x \longrightarrow T^{i_1 + 1} y = T^{i_2 + 1} x \longrightarrow \cdots \longrightarrow T^{i_s + 1} y  = T^k x \]
is a path in $\vec{G}$. This ends the proof of Proposition~\ref{prop: rank or predict} in Case~1.

\medskip
In the remaining cases we assume that $x,y$ have bounded orbits. Then each of the points $x,y$ is periodic or pre-periodic, and hence the numbers $C(x), P(x), C(y), P(y)$ are finite with $C(x), C(y) > 0$. For simplicity, denote
\[
p = C(x), \quad q = C(y)
\]
and notice that $\#\Orb(x) = P(x) + p$ and $\#\Orb(y) = P(y) + q$.

\subsection*{Case 2.} $\#\Orb(x) \leq k$, $\#\Orb(y) \leq k$, $C(y)|C(x)$ and $P(y) + C(x) \leq k$.

\medskip
By assumption, we have $p =  rq $ for some $r \in \N$. As $k - p \geq P(x)$ and $k - p \geq P(y)$, we see that $T^{k-p}x$ belongs to the cyclic part of $\Orb(x)$ and $T^{k-p}y$ belongs to the cyclic part of $\Orb(y)$. Therefore, $T^{k-p}x = T^k x$ and $T^{k-p} y = T^{k-rq} y = T^{k} y$. Consequently, $T^k x, T^k y \in V$ and $G$ contains the path
\[ T^{k} x = T^{k - p} x \longleftrightarrow T^{k - p} y = T^k y. \]
Hence, we can use Lemma~\ref{lem:claim} to show Proposition~\ref{prop: rank or predict}, with the constant $k$ instead of $2k$. This ends the proof in Case~2.

\medskip
Note that if $\Orb(x) \cap \Orb(y) \neq \emptyset$ and both orbits are finite, then the cyclic parts of $\Orb(x)$ and $\Orb(y)$ have to be equal, hence $C(x) = C(y)$. Therefore, Cases~1--2 cover all the possibilities with $\Orb(x) \cap \Orb(y) \neq \emptyset$ (since then $\#\Orb(y) \leq k$ implies $P(y) + C(x) \leq k$).

\subsection*{Case 3.} $\#\Orb(x) + \#\Orb(y) \leq k$.

\medskip
By assumption,
\begin{equation*}\label{eq:sum of orbits bound}
k \geq p + q + P(x) + P(y).
\end{equation*}
This implies the following equalities.
\begin{itemize}
	\item $T^{k-p}x = T^k x$ (as $k-p \geq P(x)$, and hence $T^{k-p} x$ belongs to the cyclic part of $\Orb(x)$ and $p$ is its period),
	\item $T^{k - q - p} y = T^{k - p} y$ (as $k - q - p \geq P(y)$),
	\item $T^{k - q - p} x = T^{k - q} x$ (as $k - q -  p \geq P(x)$),
	\item $T^{k - q} y  = T^k y$ (as $k-q \geq P(y)$).
\end{itemize}
Using these equalities, we see that $T^k x, T^k y \in V$ and 
\[ T^k x = T^{k-p} x \longleftrightarrow T^{k-p} y = T^{k-p-q} y \longleftrightarrow T^{k - p - q} x = T^{k - q} x \longleftrightarrow T^{k-q} y = T^k y. \]
is a path in $G$. Again, Proposition~\ref{prop: rank or predict} holds by Lemma~\ref{lem:claim}, with the constant $k$ instead of $2k$. This ends the proof in Case~3.

\subsection*{Case 4.} $x$ is periodic, $\#\Orb(x) + \#\Orb(y) \geq k$, $\#\Orb(x) \leq k,$ $\#\Orb(y) \leq k$, $\Orb(x) \cap \Orb(y) = \emptyset$ and $\big( C(y) \nmid C(x) \text{ or } P(y) + C(x) \geq k \big)$.

\smallskip
As $x$ is periodic, the matrix $J_{x,y}$ has the following form.
\[ 
J_{x,y} = \;\renewcommand{\arraystretch}{1.75}
\begin{NiceArray}{r}
\\
\\
\\
\\
\\
\\
\\
\\
\\[1mm]
\Block{2-1}{k\ \mathrm{mod}\ p}\\
\\[3mm]
\\
\end{NiceArray}
\hspace{5.5mm} 
\begin{bNiceArray}{ccccIccIccccc}
\;\; 1 \;\; &  \fatgreyzero &  \fatgreyzero &  \fatgreyzero & -1 &  \fatgreyzero &  \fatgreyzero &  \fatgreyzero &  \fatgreyzero &  \fatgreyzero &  \fatgreyzero\\
 \fatgreyzero & \;\; 1 \;\; &  \fatgreyzero & \fatgreyzero & \fatgreyzero & -1 & \fatgreyzero &\fatgreyzero  & \fatgreyzero & \fatgreyzero & \fatgreyzero\\
\fatgreyzero & \fatgreyzero & \;\; 1 \;\; & \fatgreyzero & \Block[borders={top,tikz=dashed}]{1 - 2}{}\fatgreyzero &\fatgreyzero &\Block[borders={top,bottom,tikz=dashed}]{5 - 5}{} -1 & \fatgreyzero & \fatgreyzero & \fatgreyzero & \fatgreyzero\\
\fatgreyzero & \fatgreyzero & \fatgreyzero & \;\; 1 \;\; & \fatgreyzero & \fatgreyzero & \fatgreyzero & -1 & \fatgreyzero & \fatgreyzero & \fatgreyzero\\
\Block[borders={top, bottom,tikz=dashed}]{4 - 4}{}\;\; 1 \;\; & \fatgreyzero & \fatgreyzero & \fatgreyzero & \fatgreyzero & \fatgreyzero & \fatgreyzero & \fatgreyzero & -1 & \fatgreyzero & \fatgreyzero\\
\fatgreyzero & \;\; 1 \;\; & \fatgreyzero & \fatgreyzero & \fatgreyzero & \fatgreyzero & \fatgreyzero & \fatgreyzero & \fatgreyzero & -1 & \fatgreyzero\\
\fatgreyzero & \fatgreyzero & \;\; 1 \;\; &\fatgreyzero  & \fatgreyzero & \fatgreyzero & \fatgreyzero & \fatgreyzero & \fatgreyzero &\fatgreyzero  & -1\\
\fatgreyzero & \fatgreyzero & \fatgreyzero & \;\; 1 \;\; &\fatgreyzero  &\fatgreyzero  & -1 & \fatgreyzero & \fatgreyzero & \fatgreyzero & \fatgreyzero \\
\;\; 1 \;\; & \fatgreyzero & \fatgreyzero & \fatgreyzero & \fatgreyzero & \fatgreyzero & \fatgreyzero & -1 & \fatgreyzero &\fatgreyzero  &\fatgreyzero \\
\fatgreyzero & \;\; 1 \;\; & \fatgreyzero & \fatgreyzero & \fatgreyzero & \fatgreyzero & \fatgreyzero & \fatgreyzero & -1 & \fatgreyzero & \fatgreyzero\\[1mm]
\CodeAfter
\OverBrace{1-1}{1-4}{p}[yshift=2mm]
\OverBrace{1-5}{1-6}{P(y)}[yshift=2mm]
\OverBrace{1-7}{1-11}{q}[yshift=2mm]
\UnderBrace{10-1}{10-4}{\#\Orb(x)}[yshift=2mm]
\UnderBrace{10-5}{10-11}{\#\Orb(y)}[yshift=2mm]
\SubMatrix{\{}{9-1}{10-1}{.}[xshift=2mm, extra-height=4.2mm,code = {\tikz \draw [dashed] (-7.7,-2.25) -- (-7.7,-3.95) ;}]
\SubMatrix{.}{8-11}{10-11}{\rbrace}[xshift=2mm,extra-height=4.5mm, code = {\tikz \draw [dashed] (-1.75,-1.5) -- (-1.75,-3.95) ;}]
\SubMatrix{.}{1-11}{2-11}{\rbrace}[xshift=2mm,extra-height=3mm]
\SubMatrix{.}{3-11}{7-11}{\rbrace}[xshift=2mm,extra-height=4.3mm]
\end{bNiceArray}
\hspace{3.5mm}
\begin{NiceArray}{l}
\Block{2-1}{P(y)}\\
\\
\Block{5-1}{q}\\
\\
\\
\\
\\
\Block{3-1}{(k - P(y))\ \mathrm{mod}\ q}\\
\\
\\
\end{NiceArray}
\]
(we present here a particular case $k=10$, $p=4$, $P(y) =2$, $q=5$ for illustration, but the structure of the matrix is general).

We will now perform a sequence of elementary row operations on the matrix  $J_{x,y}$ to `move' all its entries $1$ to the main diagonal. Namely, we perform the following sequence of $k - p$ operations on $J_{x,y}$, to obtain a new matrix $K_{x,y}$ (recall that we enumerate rows and columns starting from $0$).
\begin{itemize}
	\item[($1$)] Subtracting the $(k - 1 - p)$-th row from the $(k-1)$-th row (the last row of the matrix)
	
	\smallskip
	As $T^{k-1}x = T^{k-1 - p}x$, this operation deletes the entry $1$ from the $(k-1)$-th row. Since $k-1-p < \#\Orb(y)$ (as $k \leq \#\Orb(x) + \#\Orb(y)$), this also adds the entry $1$ to the element in the $(k - 1 - p + p = k - 1)$-th column and $(k-1)$-th row. We claim that this entry does not cancel with the entry $-1$ in the $(k-1)$-th row. For that, we have to check that the unique entry $-1$ in the $(k-1)$-th row is not located in the $(k-1)$-th column. There are two possibilities, corresponding to the assumptions $P(y) + C(x) \geq k$ and $C(y) \nmid C(x)$, respectively. If $P(y) + C(x) \geq k$, then $k - p - 1 < P(y)$, hence the $(k-1)$-th column 'corresponds' to the preperiodic part of the $\Orb(y)$ and thus contains a single entry $-1$ (in the $(k-p-1)$-th row), which is therefore not deleted. Alternatively, we have $C(y) \nmid C(x)$. Then the cancellation of the entry $-1$ from the $(k-1)$-th row by subtracting the $(k-1-p)$-th row would mean that $T^{k-1} y = T^{k - 1 - p} y$. This cannot happen as $C(y) \nmid C(x)$. Therefore, the final effect of this operation is moving the entry $1$ in the last row from the $(k\ \mathrm{mod}\ p)$-th column to the $(k - 1)$-th column in the same row, i.e.~to the main diagonal (without cancelling any entries $-1$).
	
	\smallskip
	\item[($2$)] Subtracting the $(k - 2 - p)$-th row the $(k-2)$-th row
	
	\smallskip
	Similarly, this moves the entry $1$ in the $(k - 2)$-th row to the main diagonal.
	\item[$\vdots$\;\;]
	\item[]
		
	\item[($k-p$)] Subtracting the $0$-th row from the $p$-th row.
	
	\smallskip
	This moves the entry $1$ in the $p$-th row to the diagonal of the first (from the left) $k \times k$ square submatrix.
\end{itemize}

\smallskip
Note that in the above sequence of operations, once the row is modified, it is never being used later to modify another row, hence indeed the only results of the operations are the ones described above. Consequently, in the new matrix $K_{x,y}$, all the rows between the $p$-th and $(k-1)$-th ones have their unique entry $1$ on the main diagonal, while all the entries $-1$ remain at the same location as in $J_{x,y}$. As $J_{x,y}$ has entries $1$ on the main diagonal in the rows between $0$ and $p-1$, and these rows were not modified by the above operations, we conclude that $K_{x,y}$ has the following form.
\[
K_{x,y} = \;\renewcommand{\arraystretch}{1.75}
\begin{NiceArray}{r}
\\
\\
\\
\\
\\[0.5mm]
\Block{6-1}{k - p}\\
\\[2.5mm]
\\
\\
\\
\\
\\
\end{NiceArray}
\hspace{5.5mm} 
\begin{bNiceArray}{ccccIccIccccc}
\;\; \boldsymbol{1} \;\; &  \fatgreyzero &  \fatgreyzero &  \fatgreyzero & -1 &  \fatgreyzero &  \fatgreyzero  &  \fatgreyzero & \fatgreyzero & \fatgreyzero& \fatgreyzero\\
\fatgreyzero & \;\; \boldsymbol{1} \;\; & \fatgreyzero & \fatgreyzero & \fatgreyzero & -1 & \fatgreyzero & \fatgreyzero & \fatgreyzero & \fatgreyzero &\fatgreyzero \\
\fatgreyzero & \fatgreyzero & \;\; \boldsymbol{1} \;\; & \fatgreyzero &\Block[borders={top,tikz=dashed}]{1 - 2}{}  \fatgreyzero & \fatgreyzero &\Block[borders={top,bottom,tikz=dashed}]{5 - 5}{} -1 & \fatgreyzero & \fatgreyzero & \fatgreyzero &\fatgreyzero \\
\fatgreyzero & \fatgreyzero & \fatgreyzero & \;\; \boldsymbol{1} \;\; & \fatgreyzero & \fatgreyzero & \fatgreyzero & -1 & \fatgreyzero & \fatgreyzero & \fatgreyzero\\
\Block[borders={top, tikz=dashed}]{4 - 4}{} \fatgreyzero& \fatgreyzero & \fatgreyzero & \fatgreyzero & \;\; \boldsymbol{1} \;\; & \fatgreyzero & \fatgreyzero & \fatgreyzero & -1 & \fatgreyzero & \fatgreyzero\\
\fatgreyzero & \fatgreyzero & \fatgreyzero & \fatgreyzero & \fatgreyzero & \;\; \boldsymbol{1} \;\; & \fatgreyzero & \fatgreyzero & \fatgreyzero & -1 & \fatgreyzero\\
\fatgreyzero & \fatgreyzero & \fatgreyzero & \fatgreyzero & \fatgreyzero & \fatgreyzero & \;\; \boldsymbol{1} \;\; & \fatgreyzero & \fatgreyzero & \fatgreyzero & -1\\
\fatgreyzero & \fatgreyzero & \fatgreyzero & \fatgreyzero & \fatgreyzero & \fatgreyzero & -1 & \;\; \boldsymbol{1} \;\; & \fatgreyzero & \fatgreyzero & \fatgreyzero \\
\fatgreyzero & \fatgreyzero & \fatgreyzero & \fatgreyzero & \fatgreyzero & \fatgreyzero & \fatgreyzero & -1 & \;\; \boldsymbol{1} \;\; & \fatgreyzero & \fatgreyzero\\
\fatgreyzero & \fatgreyzero & \fatgreyzero & \fatgreyzero & \fatgreyzero & \fatgreyzero & \fatgreyzero & \fatgreyzero & -1 & \;\; \boldsymbol{1} \;\; & \fatgreyzero\\[1mm]
\CodeAfter
\OverBrace{1-1}{1-4}{p}[yshift=2mm]
\OverBrace{1-5}{1-6}{P(y)}[yshift=2mm]
\OverBrace{1-7}{1-11}{q}[yshift=2mm]
\UnderBrace{10-1}{10-10}{k}[yshift=9.5mm]
\UnderBrace{10-1}{10-4}{\#\Orb(x)}[yshift=2mm]
\UnderBrace{10-5}{10-11}{\#\Orb(y)}[yshift=2mm]
\SubMatrix{\{}{5-1}{10-1}{.}[xshift=2mm, extra-height=4mm]
\SubMatrix{.}{8-11}{10-11}{\rbrace}[xshift=2mm,extra-height=4.5mm, code = {\tikz \draw [dashed] (-1.8,-1.5) -- (-1.8,-3.85) ;}]
\SubMatrix{.}{1-11}{2-11}{\rbrace}[xshift=2mm,extra-height=3mm]
\SubMatrix{.}{3-11}{7-11}{\rbrace}[xshift=2mm,extra-height=4.3mm]
\end{bNiceArray}
\hspace{3.5mm}
\begin{NiceArray}{l}
\Block{2-1}{P(y)}\\
\\
\Block{5-1}{q}\\
\\
\\
\\
\\
\Block{3-1}{(k - P(y))\ \mathrm{mod}\ q}\\
\\
\\
\end{NiceArray}\vspace*{6mm}
\]
The key observation is that the matrix $K_{x,y}$ is equal to $J_{x,y}(\tilde T,k)$ for a suitably changed dynamics $\tilde T$ on the set $\Orb(x) \cup \Orb(y)$. Indeed, define 
\[\tilde T \colon \Orb(x) \cup \Orb(y) \to \Orb(x) \cup \Orb(y)\] 
by
\[ \tilde T|_{\Orb(y)} = T|_{\Orb(y)},\qquad \tilde T(T^j x) = \begin{cases} T^{j+1}x & \text{for } 0 \leq j < p-1 \\
y &\text{for } j = p-1\end{cases}. \]
In order words, $\tilde T$ follows the dynamics of $T$ on $\Orb(y)$ and changes the periodic orbit of $x$ into a pre-periodic part, whose final element is mapped to $y$ instead of returning to $x$. Note that $\#\Orb_{\tilde T}(x) = \Orb_T(x) + \Orb_T(y) \geq k$, hence $J_{x,y}(\tilde T, k)$ has entries $1$ exactly on the main diagonal. The same is true for $K_{x,y}$. As additionally $\tilde T|_{\Orb(y)} = T|_{\Orb(y)}$ and the matrices $J_{x,y}(T)$ and $K_{x,y}$ have entries $-1$ at the same positions, we see that indeed 
\[
K_{x,y} = J_{x,y}(\tilde T,k).
\]
Note that $\rank{J_{x,y}(\tilde T, k)} < k$, because otherwise $\rank J_{x,y}(T,k)  = \rank K_{x,y} = \rank{J_{x,y}(\tilde T, k}) = k$, which is impossible by \eqref{eq:rankJ<}. Since $\Orb_{\tilde T}(x) \geq k$, (the previously proved) Case~1 applies to the points $x,y$ undergoing the dynamics of $\tilde T$. Hence, we can use Proposition~\ref{prop: rank or predict} (for the dynamics $\tilde T$) to obtain
\[\| \phi^{\tilde T}_{\alpha,k}(\tilde T x) - \phi^{\tilde T}_{\alpha,k}(\tilde T y)\| \leq k \| \phi^{\tilde T}_{\alpha,k}(x) - \phi^{\tilde T}_{\alpha,k}(y)\| \quad \text{for every } \alpha \in \R^k.\]
Therefore,
\[
\begin{aligned}
|h_{\alpha}(T^{k - p} y ) - h_{\alpha}(T^{k} y )|  & = |h_{\alpha}({\tilde T}^{k} x ) - h_{\alpha}({\tilde T}^{k} y )| \leq k \| \phi^{\tilde T}_{\alpha,k}(x) - \phi^{\tilde T}_{\alpha,k}(y)\|\\
&\leq k \max \{|h_{\alpha}(x) - h_{\alpha}(y)|, \ldots, |h_{\alpha}(T^{p-1}x) - h_{\alpha}(T^{p-1}y)|,\\
&\phantom{\leq k \max \{ \;} |h_{\alpha}(y) - h_{\alpha}(T^p y)|, \ldots, |h_{\alpha}(T^{k-p-1}y) - h_{\alpha}(T^{k-1} y)|  \}.
\end{aligned}  \]
Consequently, as $x$ is $p$-periodic under $T$, we have
\[
\begin{aligned} |h_{\alpha}(T^k x) - h_{\alpha}(T^k y)| &= |h_{\alpha}(T^{k-p} x) - h_{\alpha}(T^k y)| \\
&\leq |h_{\alpha}(T^{k-p} x) - h_{\alpha}(T^{k-p} y)| + |h_{\alpha}(T^{k - p} y ) - h_{\alpha}(T^{k} y )|\\
&\leq |h_{\alpha}(T^{k-p} x) - h_{\alpha}(T^{k-p} y)| \\
&\phantom{\leq} \; + k \max \{|h_{\alpha}(x) - h_{\alpha}(y)|, \ldots, |h_{\alpha}(T^{p-1}x) - h_{\alpha}(T^{p-1}y)|,\\
&\phantom{\leq + k \max \{ }\;\: |h_{\alpha}(y) - h_{\alpha}(T^p y)|, \ldots, |h_{\alpha}(T^{k-p-1}y) - h_{\alpha}(T^{k-1} y)|  \} \\
& \leq |h_{\alpha}(T^{k-p} x) - h_{\alpha}(T^{k-p} y)| + k \| \phi^T_{\alpha,p}(x) - \phi^T_{\alpha,p}(y)\| \\
&\phantom{\leq} \; + k \max\{ |h_{\alpha}(x) - h_{\alpha}(y)| + |h_{\alpha}(T^p x) - h_{\alpha}(T^p y)|, \ldots,\\
&\phantom{\leq + k \max \{ }\;\: |h_{\alpha}(T^{k-p-1}x) - h_{\alpha}(T^{k-p-1}y)| + |h_{\alpha}(T^{k-1} x) - h_{\alpha}(T^{k-1} y)| \} \\
& = |h_{\alpha}(T^{k-p} x) - h_{\alpha}(T^{k-p} y)| + k \| \phi^T_{\alpha,p}(x) - \phi^T_{\alpha,p}(y)\| \\
&\phantom{\leq} \; + k \max\{ |h_{\alpha}(x) - h_{\alpha}(y)|, \ldots, |h_{\alpha}(T^{k-p-1} x) - h_{\alpha}(T^{k-p-1} y)|\}\\
&\phantom{\leq} \; + k \max \{ |h_{\alpha}(T^{p}x) - h_{\alpha}(T^{p}y)|, \ldots, |h_{\alpha}(T^{k-1} x) - h_{\alpha}(T^{k-1} y)| \} \\
& \leq k\| \phi^T_{\alpha,k-p + 1}(x) - \phi^T_{\alpha,k-p + 1}(y)\| + k \| \phi^T_{\alpha,k}(x) - \phi^T_{\alpha,k}(y)\| \\
& \leq 2k \| \phi^T_{\alpha,k}(x) - \phi^T_{\alpha,k}(y)\|,
\end{aligned},
\]
which completes the proof of Proposition~\ref{prop: rank or predict} in Case~4.

\medskip

Note that Cases 1--4 (by applying them with $x$ and $y$ possibly exchanged) cover all the possibilities when at least one of the points $x,y$ is periodic or aperiodic. Therefore, it remains to consider the case when both points $x$ and $y$ are pre-periodic. Now we explain how to reduce this case to the previous ones. We can assume $\#\Orb(x) \leq k$ and $\#\Orb(y) \leq k$, as otherwise Case~1 applies (which requires no assumptions on the periodicity of $x$ or $y$). Moreover, as noted above, we can assume $\Orb(x) \cap \Orb(y) = \emptyset$, as the case of non-disjoint orbits follows from Cases 1--2, which do not require the assumption that at least one of the points is periodic. By symmetry, we can assume $0 < P(x) \leq P(y)$. Then the matrix $J_{x,y}$ is of the following block form, consisting of four column blocks:
\[\renewcommand{\arraystretch}{1.4} J_{x,y} = \begin{bNiceArray}{ccccIcccccIcccccccIcccccc}
\Block[borders={bottom, tikz=dashed}]{2-4}<\Large>{I_{P(x)}} & & & & \Block[borders={bottom, tikz=dashed}]{2-5}<\Large>{0} & & & & &   \Block[borders={bottom, tikz=dashed}]{4-7}<\Large>{-I_{P(y)}} & & & & & & & \Block[borders={bottom, tikz=dashed}]{4-6}<\Large>{0} & & & & & \\
& & & & & & & & & & & & & & & & & & & & & \\
\Block{6-4}<\Large>{0} & & & & \Block{6-5}<\Large>{C_x}  & & & & & & & & & & & & & &  \\
& & & & & & & & & & & & & & & & &  \\ 
& & & & & & & & &    \Block{4-7}<\Large>{0} & & & & & & & \Block{4-6}<\Large>{-C_y} & & & & &  \\
& & & & & & & & & & & & & & & & & & & & & \\
& & & & & & & & & & & & & & & & & & & & & \\
& & & & & & & & & & & & & & & & & & & & &
\end{bNiceArray},\]
where $I_{P(x)}$ and $I_{P(y)}$ are identity matrices of size $P(x) \times P(x)$ and $P(y) \times P(y)$, respectively, and $C_x, C_y$ are circulant matrices of the form 
\[
C_x = 
\begin{bNiceArray}{ccc}
	1 & \greyzero &  \greyzero \\
	 \greyzero & 1 &  \greyzero \\
	 \greyzero &  \greyzero & 1 \\
\Block[borders={top, bottom, tikz=dashed}]{3 - 3}{} 1 &  \greyzero &  \greyzero \\
	 \greyzero & 1 &  \greyzero \\
	 \greyzero &  \greyzero & 1 \\
	1 &  \greyzero &   \greyzero\\
	\CodeAfter
	\UnderBrace{7-1}{7-3}{C(x)}[yshift=2mm]
	\SubMatrix{.}{1-3}{7-3}{\rbrace}[xshift=2mm]
\end{bNiceArray}
\hspace{3.5mm}
\begin{NiceArray}{l}\\[-1mm]
\Block{5-1}{k - P(x),}\\[2mm]
\\
\\
\\
\\
\\
\end{NiceArray}
\ \ \ \ \ \ \ \ \ \ \ \
 C_y = 
\begin{bNiceArray}{cc}
1 &  \greyzero \\ 
 \greyzero & 1 \\
\Block[borders={top, bottom, tikz=dashed}]{2 - 2}{}1 &  \greyzero \\
 \greyzero & 1 \\
1 &  \greyzero \\
\CodeAfter
\UnderBrace{5-1}{5-2}{C(y)}[yshift=2mm]
\SubMatrix{.}{1-2}{5-2}{\rbrace}[xshift=2mm]
\end{bNiceArray}
\hspace{3.5mm}
\begin{NiceArray}{l}
\Block{5-1}{k - P(y)}\\
\\
\\
\\
\\
\end{NiceArray}\vspace*{7mm}
\]
(again, we take here $p=3,\ k-P(x) = 7$ and $q=2,\ k-P(y) = 5$ for illustration, but the circulant structure is general).

Add the first $P(x)$ columns of $J_{x,y}$ to the first $P(x)$ columns of the third column block of $J_{x,y}$ (the one containing $-I_{P(y)}$). As $P(x) \leq P(y)$, this deletes the first $P(x)$ entries $-1$ in the third column block. Therefore, we obtain a new matrix $J'_{x,y}$ of the form
\[\renewcommand{\arraystretch}{1.4} J'_{x,y} = \begin{bNiceArray}{ccccIccccccccc}
\Block[borders={bottom, tikz=dashed}]{2-4}<\Large>{I_{P(x)}} & & & & \Block[borders={bottom, tikz=dashed}]{2-9}<\Large>{0} & & & & & & & & \\
& & & & & & & & & & & & \\
\Block{5-4}<\Large>{0} & & & & \Block{5-9}<\Large>{K_{x,y}}  & & & & & & & & \\
& & & & & & & & & & & & \\
& & & & & & & & & & & & \\
& & & & & & & & & & & & \\
& & & & & & & & & & & & \\
\end{bNiceArray},\]
such that $\rank J'_{x,y} = \rank J_{x,y}$, where $K_{x,y}$ is a $(k - P(x)) \times (C(x) + \#\Orb(y))$ matrix of the form
\[\renewcommand{\arraystretch}{1.4} K_{x,y} = \begin{bNiceArray}{cccccIcccIccccIcccccc}
\Block{6-5}<\Large>{C_x} & & & & &   \Block{6-3}<\Large>{0} & & &  \Block[borders={bottom, tikz=dashed}]{2-4}<\Large>{-I_s}   & & & & \Block[borders={bottom, tikz=dashed}]{2-6}<\Large>{0} & & & & & \\
& & & & & & & & & & & & & & & & & \\ 
& & & & & & & & \Block{4-4}<\Large>{0} & & & & \Block{4-6}<\Large>{-C_y} & & & & &  \\
& & & & & & & & & & & & & & & & & \\
& & & & & & & & & & & & & & & & & \\
& & & & & & & & & & & & & & & & &
\end{bNiceArray},\]
with $s = P(y) - P(x)$. Let $K'_{x,y}$ be the matrix obtained from $K_{x,y}$ by deleting zero columns. As there are $P(x)$ such columns, we obtain a $(k - P(x)) \times (C(x) + \#\Orb(y) - P(x))$ matrix of the form
\[\renewcommand{\arraystretch}{1.4} K'_{x,y} = \begin{bNiceArray}{cccccIccccIcccccc}
\Block{6-5}<\Large>{C_x} & & & & &   \Block[borders={bottom, tikz=dashed}]{2-4}<\Large>{-I_s}   & & & & \Block[borders={bottom, tikz=dashed}]{2-6}<\Large>{0} & & & & & \\
& & & & & & & & & & & & & & \\ 
& & & & & \Block{4-4}<\Large>{0} & & & & \Block{4-6}<\Large>{-C_y} & & & & &  \\
& & & & & & & & & & & & & & \\
& & & & & & & & & & & & & & \\
& & & & & & & & & & & & & &
\end{bNiceArray}.\]
Note that
\begin{equation}\label{eq:rank eq}  \rank J_{x,y} = \rank J'_{x,y} = P(x) + \rank K_{x,y} = P(x) + \rank K'_{x,y}.
\end{equation}
A crucial observation is that
\[K'_{x,y} = J_{T^{P(x)}x, T^{P(x)}y}(T, k - P(x)).\]
Moreover, $\rank K'_{x,y} < k - P(x)$, since otherwise \eqref{eq:rank eq} implies $\rank J_{x,y} = k$, which contradicts \eqref{eq:rankJ<}. Hence, as $T^{P(x)}x$ is periodic, one of Cases~1--4 applies for the points $T^{P(x)}x, T^{P(x)}y$ and $k - P(x)$ instead of $k$. Hence, we can use Proposition~\ref{prop: rank or predict} to show
\begin{align*}
\| &\phi^T_{\alpha, k - P(x)}(T^{P(x)+1}x) - \phi^T_{\alpha, k - P(x)}(T^{P(x)+1}y)\|\\ &\leq 2(k-P(x)) \| \phi^T_{\alpha, k - P(x)}(T^{P(x)}x) - \phi^T_{\alpha, k - P(x)}(T^{P(x)}y) \|.
\end{align*}
This implies
\[
\begin{aligned}
|h_{\alpha}(T^k x)  - h_{\alpha}(T^k y)| &\leq \| \phi^T_{\alpha, k - P(x)}(T^{P(x)+1}x) - \phi^T_{\alpha, k - P(x)}(T^{P(x)+1}y)\| \\
&\leq 2(k-P(x)) \| \phi^T_{\alpha, k - P(x)}(T^{P(x)}x) - \phi^T_{\alpha, k - P(x)}(T^{P(x)}y)\| \\
& \leq 2k \| \phi^T_{\alpha,k}(x) - \phi^T_{\alpha,k}(y) \|
\end{aligned}
\]
and, consequently, Proposition~\ref{prop: rank or predict} holds in this case. This completes the proof of the lemma.

\section{Covering bounds}\label{sec:trans}

In this section we give bounds on the Lebesgue measure of the set of `bad' parameters $\alpha \in \R^m$ in terms of covers of the set $X$ (or its subsets). Here and in the subsequent sections we keep all the notation from the beginning of Section~\ref{sec:combin} except that we write $\phi_\alpha$ instead of $\phi^T_{\alpha,k}$ for short. We consider also the matrix $D_{x,y}$ defined in \eqref{eq:D_xy}.

\begin{lem}\label{lem: general cover bound}
Let $Y \subset X$ be a bounded set such that $T, T^2, \ldots, T^{k}$ are Lipschitz on $Y$ and $h, h_1, \ldots, h_m$ are $\beta$-H\"older $(\beta \in (0,1])$ on each set $T^n(Y)$, $n = 0, \ldots, k$. Then there exists $D = D(Y) > 0$ such that for every $0 \le \eps \le 1$  the following hold.
\begin{enumerate}[$($i$)$]
\item\label{it:Y cover} If $\{B(y_i, \eps_i)\}_{i \in I}$ is a finite or countable cover of $Y$ by balls centred at $y_i \in Y$, with $\sigma_k(D_{x,y_i})>0$, then 
\[ \Leb\big( \big\{ \alpha \in \overline{B_m} (0,1) : \underset{y \in Y}{\exists}\ \|\phi_\alpha(x) - \phi_\alpha(y)\| \leq \eps\big\} \big) \leq D\sum \limits_{i \in I}\left(\frac{(\eps + \eps_i)^\beta}{\sigma_k(D_{x,y_i})}\right)^{k} \]
for every $x \in Y$.
\item\label{it:YxY cover} For a set $Z \subset Y \times Y$, if $\{B((x_i, y_i), \eps_i)\}_{i \in I}$ is a finite or countable cover of $Z$ by balls $($in the maximum metric on $Y \times Y)$ centred at $(x_i, y_i) \in Z$, with $\sigma_k(D_{x_i,y_i})>0$, then
\[ \Leb\big( \big\{ \alpha \in \overline{B_m} (0,1) : \underset{(x,y) \in Z}{\exists}\ \|\phi_\alpha(x) - \phi_\alpha(y)\| \leq \eps  \big\} \big) \leq D\sum \limits_{i \in I}\left(\frac{(\eps + \eps_i)^\beta}{\sigma_k(D_{x_i,y_i})}\right)^{k}. \]
\end{enumerate}
\end{lem}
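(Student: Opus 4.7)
My plan is to reduce the measure bound to a countable sum of bounds produced by Lemma~\ref{lem: key_ineq_inter}, using the affine representation~\eqref{e:matrix_form} together with a uniform $\beta$-H\"older estimate on the family $\{\phi_\alpha\}_{\alpha \in \overline{B_m}(0,1)}$ over the bounded set $Y$.

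First I would observe that, by the hypotheses, for each $n = 0, \ldots, k-1$ the compositions $h \circ T^n$ and $h_j \circ T^n$ are $\beta$-H\"older on $Y$ with constants depending only on $Y, T, h, h_1, \ldots, h_m, \beta$ and $k$. Combining these over $n$ and using $|\alpha_j| \le 1$ for $\alpha \in \overline{B_m}(0,1)$, one obtains a constant $C_1 = C_1(Y) > 0$ such that
\[
\|\phi_\alpha(u) - \phi_\alpha(v)\| \le C_1 \|u - v\|^\beta
\]
for every $u, v \in Y$ and every such $\alpha$.

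For assertion~\eqref{it:Y cover}, fix $x \in Y$ and suppose $\alpha$ lies in the set on the left-hand side. Then some $y \in Y$ with $\|\phi_\alpha(x) - \phi_\alpha(y)\| \le \eps$ must belong to one of the balls $B(y_i, \eps_i)$ of the cover, and the H\"older bound yields $\|\phi_\alpha(x) - \phi_\alpha(y_i)\| \le \eps + C_1 \eps_i^\beta$. Since $0 \le \eps \le 1$ and $\beta \le 1$ force $\eps \le \eps^\beta \le (\eps + \eps_i)^\beta$, this upper bound is dominated by $C_2 (\eps + \eps_i)^\beta$ with $C_2 = 1 + C_1$. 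Substituting~\eqref{e:matrix_form} recasts the condition as $\|D_{x, y_i}\, \alpha + w_{x, y_i}\| \le C_2 (\eps + \eps_i)^\beta$. I would then apply Lemma~\ref{lem: key_ineq_inter} with $\rho = 1$, $p = k$, $\psi = D_{x,y_i}$ and $z = w_{x,y_i}$ to bound the Lebesgue measure of the corresponding set by $C_3 \bigl(C_2 (\eps + \eps_i)^\beta / \sigma_k(D_{x, y_i})\bigr)^k$ whenever $\sigma_k(D_{x,y_i}) > 0$ (when $\sigma_k(D_{x,y_i}) = 0$, the right-hand side of the asserted inequality is $+\infty$ and the bound is trivial). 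Summing over $i \in I$ by countable subadditivity of Lebesgue measure and absorbing all universal constants into a single $D = D(Y) > 0$ yields~\eqref{it:Y cover}.

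Part~\eqref{it:YxY cover} follows by the same scheme. If $(x, y) \in Z$ belongs to $B((x_i, y_i), \eps_i)$ in the max metric, then applying the H\"older bound to both coordinates gives
\[
\|\phi_\alpha(x_i) - \phi_\alpha(y_i)\| \le \|\phi_\alpha(x_i) - \phi_\alpha(x)\| + \|\phi_\alpha(x) - \phi_\alpha(y)\| + \|\phi_\alpha(y) - \phi_\alpha(y_i)\| \le 2 C_1 \eps_i^\beta + \eps,
\]
which is again at most $C_2'(\eps + \eps_i)^\beta$ by the same elementary inequality. The remaining steps, invoking~\eqref{e:matrix_form} at the base pair $(x_i, y_i)$ and then Lemma~\ref{lem: key_ineq_inter}, proceed exactly as in~\eqref{it:Y cover}. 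The only mildly delicate point is to package the H\"older and triangle-inequality estimates so that the final bound takes the clean form $(\eps + \eps_i)^\beta$ rather than $\eps + \eps_i^\beta$; once this is done, the rest is a routine application of Lemma~\ref{lem: key_ineq_inter} combined with countable subadditivity.
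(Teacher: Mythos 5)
Your proof is correct and takes essentially the same approach as the paper: both establish a uniform $\beta$-H\"older bound for $\{\phi_\alpha\}_{\alpha \in \overline{B_m}(0,1)}$ on $Y$, pass to the cover centres, rewrite via \eqref{e:matrix_form}, invoke Lemma~\ref{lem: key_ineq_inter} with $\rho=1$ and $p=k$, and sum by countable subadditivity. Your explicit remarks on absorbing $\eps + C_1\eps_i^\beta$ into $(1+C_1)(\eps+\eps_i)^\beta$ via $\eps \le \eps^\beta$ and on the trivial case $\sigma_k(D_{x,y_i})=0$ are details the paper leaves implicit, but there is no substantive difference.
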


\begin{rem}
Without any assumptions on the measurability of $Y$ and $Z$, the sets appearing on the left hand sides of the inequalities in assertions~\ref{it:Y cover} and~\ref{it:YxY cover} might not be Lebesgue measurable. In this case, it follows from the proof that the inequality holds for the outer Lebesgue measure.
\end{rem}

\begin{proof}[Proof of Lemma \rm\ref{lem: general cover bound}]
	By assumptions, $\phi_\alpha$ and $\phi_\alpha \circ T$ are $\beta$-H\"older on $Y$ with constants which are uniform with respect to $\alpha \in \overline{B_m} (0,1)$ (see \cite[p.~4955]{BGS20}). Hence, we can set
	\[ L = \sup \big\{ \|\phi_\alpha\|_{H^{\beta}(Y)}, \|\phi_\alpha \circ T\|_{H^{\beta}(Y)}: \alpha \in \overline{B_m} (0,1) \big\} < \infty.\]
	To prove assertion~\ref{it:Y cover}, fix $x \in Y$ and consider $\alpha \in \overline{B_m} (0,1)$ and $y \in Y$ such that $\|\phi_\alpha(x) - \phi_\alpha(y)\| \leq \eps$. Let $y_i$ be such that $\|y-y_i\| < \eps_i$. Then
\[ \|\phi_\alpha(x) - \phi_\alpha(y_i)\| \leq \|\phi_\alpha(x) - \phi_\alpha(y)\| + \|\phi_\alpha(y) - \phi_\alpha(y_i)\| \leq \eps + L \eps_i^\beta.\]
	Consequently,
	\[
	\begin{split}
		\big\{ \alpha &\in \overline{B_m} (0,1) :  \underset{y \in Y}{\exists}\ \|\phi_\alpha(x) - \phi_\alpha(y)\| \leq \eps\big\}\\  &\subset \bigcup \limits_{i \in I} \big\{ \alpha \in \overline{B_m} (0,1) : \|\phi_\alpha(x) - \phi_\alpha(y_i)\| \leq \eps + L \eps_i^\beta \big\}.
	\end{split}
	\]
		Therefore, by \eqref{e:matrix_form} and Lemma~\ref{lem: key_ineq_inter}
	\[
	\begin{split}
		\Leb & \big( \big\{ \alpha \in \overline{B_m} (0,1) : \underset{y \in Y}{\exists}\ \|\phi_\alpha(x) - \phi_\alpha(y)\| \leq \eps  \big\} \big) \\
		& \leq \sum \limits_{i \in I} \Leb (  \{ \alpha \in \overline{B_m} (0,1) : \|\phi_\alpha(x) - \phi_\alpha(y_i)\| \leq \eps + L \eps_i^\beta \} ) \\
		& = \sum \limits_{i \in I} \Leb ( \{ \alpha \in \overline{B_m} (0,1) : \|D_{x,y_i}\alpha + w_{x,y_i}\| \leq \eps + L \eps_i^\beta \} ) \\
		& \leq C \sum \limits_{i \in I} \left(\frac{\eps + L\eps_i^\beta}{\sigma_k(D_{x,y_i})}\right)^{k}  \leq D \sum \limits_{i \in I} \left(\frac{(\eps + \eps_i)^\beta}{\sigma_k(D_{x,y_i})}\right)^{k}
	\end{split}
	\]
	for some $C,D>0$.
	This completes the proof of assertion~\ref{it:Y cover}. Assertion~\ref{it:YxY cover} is proved in the same way, where instead of fixing $x \in Y$ and considering an approximation of $y$ by $y_i$, we approximate the pair $(x,y)$ by $(x_i,y_i)$. We omit the details.
\end{proof}

If $Y$ is compact, we can apply Corollary~\ref{cor:crucial} to obtain the following result.

\begin{lem}\label{lem: compact cover bound}
Let $Y \subset X$ be as in Lemma~{\rm\ref{lem: general cover bound}} and assume additionally that $Y$ is compact. Then for every $\delta>0$ there exists $D_\delta = D_\delta(Y) > 0$, such that if $\{B(y_i, \eps_i)\}_{i \in I}$ is a finite or countable cover of $Y$ by balls centred at $y_i \in Y$, then 
\[ 
\begin{split}
\Leb&\big( \big\{ \alpha \in \overline{B_m} (0,1) : \underset{y \in Y}{\exists}\ \|\phi_\alpha(x) - \phi_\alpha(y)\| \leq \eps \text{ and } \|\phi_\alpha(Tx) - \phi_\alpha(Ty)\| \geq \delta\big\} \big)\\ &\leq D_\delta \sum \limits_{i \in I}(\eps + \eps_i)^{\beta k}
\end{split}
\]
for every $x \in Y$ and $0 \leq \eps \leq \frac{\delta}{3k}$.
\end{lem}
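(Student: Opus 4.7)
The plan is to adapt the argument of Lemma~\ref{lem: general cover bound}(\ref{it:Y cover}), using the extra hypothesis $\|\phi_\alpha(Tx) - \phi_\alpha(Ty)\| \geq \delta$ to replace the denominator $\sigma_k(D_{x,y_i})^k$ by a uniform positive constant. Fix $x \in Y$ and let $L$ be a uniform $\beta$-H\"older bound for $\phi_\alpha$ and $\phi_\alpha \circ T$ over $\alpha \in \overline{B_m}(0,1)$, exactly as in the proof of Lemma~\ref{lem: general cover bound}. For each $i \in I$ set
\[
E_i = \bigl\{\alpha \in \overline{B_m}(0,1) : \exists\, y \in Y \cap B(y_i,\eps_i),\ \|\phi_\alpha(x)-\phi_\alpha(y)\| \leq \eps,\ \|\phi_\alpha(Tx)-\phi_\alpha(Ty)\| \geq \delta\bigr\}.
\]
The set to be estimated is contained in $\bigcup_{i} E_i$, and H\"older continuity gives, for every $\alpha \in E_i$ with witness $y$,
\[
\|\phi_\alpha(x)-\phi_\alpha(y_i)\| \leq \eps + L\eps_i^\beta, \qquad \|\phi_\alpha(Tx)-\phi_\alpha(Ty_i)\| \geq \delta - L\eps_i^\beta.
\]

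I will split $I$ at a threshold $\eta_1 = \eta_1(\delta,k) > 0$. For $i$ with $L\eps_i^\beta \geq \eta_1$ I use the trivial bound $\Leb(E_i) \leq \Leb(\overline{B_m}(0,1))$ and absorb it into $(\eps + \eps_i)^{\beta k}$ via $(\eps+\eps_i)^{\beta k} \geq (\eta_1/L)^k$. The main case is $L\eps_i^\beta < \eta_1$. Here I intend to apply Lemma~\ref{lem: key_ineq_inter} to the inclusion $E_i \subset \{\alpha : \|D_{x,y_i}\alpha + w_{x,y_i}\| \leq \eps + L\eps_i^\beta\}$ coming from \eqref{e:matrix_form}, provided I can establish a uniform lower bound $\sigma_k(D_{x,y_i}) \geq \sigma_0 > 0$ depending only on $Y,\delta,k,L,\beta$. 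Granting this, the resulting estimate $C(\eps + L\eps_i^\beta)^k/\sigma_0^k$ is bounded by a constant multiple of $(\eps + \eps_i)^{\beta k}$ via the elementary inequality $\eps + L\eps_i^\beta \leq (1+L)(\eps^\beta + \eps_i^\beta) \lesssim (\eps + \eps_i)^\beta$ valid for $\eps,\eps_i \in [0,1]$.

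The crux is the uniform lower bound on $\sigma_k(D_{x,y_i})$. The function $(x',y') \mapsto \sigma_k(D_{x',y'})$ is continuous on the compact set $Y \times Y$ and, by Lemma~\ref{lem:rank=singular}, vanishes exactly on the closed set $F = \{(x',y') \in Y\times Y : \rank D_{x',y'} < k\}$. Hence, on the complement of any open $\eta$-neighbourhood $F_\eta$ of $F$, this function is bounded below by some constant $c_\eta > 0$. It therefore suffices to show that $(x,y_i) \notin F_\eta$ for suitably chosen $\eta = \eta(\delta,k,L,\beta) > 0$ whenever $E_i \neq \emptyset$ and $L\eps_i^\beta < \eta_1$. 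Suppose for contradiction $(x,y_i) \in F_\eta$, choose $(x'',y'') \in F$ with $\max(\|x-x''\|,\|y_i-y''\|) < \eta$, and apply Proposition~\ref{prop: rank or predict} to $(x'',y'')$ and any fixed $\alpha \in E_i$:
\[
\|\phi_\alpha(Tx'') - \phi_\alpha(Ty'')\| \leq 2k\,\|\phi_\alpha(x'') - \phi_\alpha(y'')\|.
\]
Combining with the H\"older perturbations between $(x,y_i)$ and $(x'',y'')$ and the bounds from the first paragraph yields
\[
\delta - L\eps_i^\beta - 2L\eta^\beta \;\leq\; 2k\bigl(\eps + L\eps_i^\beta + 2L\eta^\beta\bigr),
\]
which using $\eps \leq \delta/(3k)$ rearranges to $\delta/3 \leq (2k+1)L\eps_i^\beta + (4k+2)L\eta^\beta$. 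Choosing $\eta$ so that $(4k+2)L\eta^\beta < \delta/6$ and $\eta_1 = \delta/(6(2k+1))$ contradicts this, forcing $(x,y_i) \notin F_\eta$ and hence $\sigma_k(D_{x,y_i}) \geq c_\eta$.

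The main obstacle I expect is precisely this compactness/perturbation step: translating the hypothesis $\|\phi_\alpha(Tx)-\phi_\alpha(Ty)\| \geq \delta$ (a statement about the particular observable $h_\alpha$ and witness $y$) into the uniform quantitative lower bound on $\sigma_k(D_{x,y_i})$ (a statement about the probe matrix at the cover point) requires carefully tracking several error terms ($L\eps_i^\beta$ from the covering, $L\eta^\beta$ from the distance to $F$, and $2k\eps$ from the observable). Once the lower bound is secured, summing $\Leb(E_i) \lesssim (\eps + \eps_i)^{\beta k}$ across both parts of the partition and setting $D_\delta$ to be the maximum of the two resulting constants completes the proof.
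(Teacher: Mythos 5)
Your proof is correct, but it takes a genuinely different route from the paper's. The paper defines the compact set
\[
F_\delta = \bigl\{(x,y) \in Y\times Y : \exists\,\alpha \in \overline{B_m}(0,1),\ \|\phi_\alpha(Tx)-\phi_\alpha(Ty)\| \geq 3k\|\phi_\alpha(x)-\phi_\alpha(y)\| \text{ and } \geq \delta\bigr\}
\]
(a projection of a compact subset of $Y\times Y\times \overline{B_m}(0,1)$), applies Corollary~\ref{cor:crucial} to see that $\sigma_k(D_{x,y})>0$ on $F_\delta$, and then uses compactness of $F_\delta$ together with continuity of $\sigma_k$ to obtain a uniform lower bound $s_\delta > 0$ directly on $F_\delta$. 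Since $\eps\leq\delta/(3k)$ forces any bad witness $y$ into the slice $F_{\delta,x}\subset F_\delta$, the given cover of $Y$ (restricted to the balls meeting $F_{\delta,x}$, with centres shifted into $F_{\delta,x}$ and radii doubled) feeds into Lemma~\ref{lem: general cover bound}\ref{it:Y cover} with $\sigma_k\geq s_\delta$, and the estimate falls out immediately. You instead work with the rank-deficiency locus $F=\{\sigma_k(D_{x',y'})=0\}$ and its $\eta$-neighbourhood $F_\eta$, extract the uniform lower bound $c_\eta$ from compactness of $Y\times Y\setminus F_\eta$, and establish $(x,y_i)\notin F_\eta$ by a perturbation argument: apply Proposition~\ref{prop: rank or predict} at a nearby $(x'',y'')\in F$ and track the H\"older error terms $L\eps_i^\beta$ and $L\eta^\beta$. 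This requires the extra device of splitting the index set by a radius threshold $\eta_1$ to absorb the contribution of large balls trivially. Both proofs hinge on the same compactness-plus-continuity principle for $\sigma_k$ and on the key combinatorial Proposition~\ref{prop: rank or predict}, but the paper reaches the uniform $\sigma_k$-bound in one step by building $\sigma_k>0$ directly into $F_\delta$, whereas your approach is somewhat longer but makes the role of the rank-deficiency locus explicit and avoids ever forming the set $F_\delta$ or shifting cover centres. Both are correct; the margin $3k$ (and hence the hypothesis $\eps\leq\delta/(3k)$) plays the same quantitative role in each.
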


\begin{proof}
It is enough to consider $\eps \leq 1$. Define
	\[\begin{split} F_\delta = \big\{ (x,y) \in Y \times Y : &\underset{\alpha \in \overline{B_m} (0,1)}{\exists}\ \ \|\phi_\alpha(Tx) - \phi_\alpha(Ty)\| \geq 3k  \|\phi_\alpha(x) - \phi_\alpha(y)\|\\
	& \text{and } \|\phi_\alpha(Tx) - \phi_\alpha(Ty)\| \geq \delta  \big\}.
\end{split}\]
Note that $F_\delta$ is compact as a projection of the compact set
\[\begin{split} \big\{ (x,y,\alpha) \in Y \times Y \times \overline{B_m} (0,1)\ :\ & \|\phi_\alpha(Tx) - \phi_\alpha(Ty)\| \geq 3k  \|\phi_\alpha(x) - \phi_\alpha(y)\|\\
	& \text{ and } \|\phi_\alpha(Tx) - \phi_\alpha(Ty )\| \geq \delta  \big\}
\end{split} \]
onto the first two coordinates plane. Corollary~\ref{cor:crucial} implies that the $k$-th singular value $\sigma_k(D_{u,v})$ is positive on $F_\delta$. By the continuity of $h_i \circ T^j$, the matrix $D_{x,y}$ depends continuously on $(x,y)$ and hence so does $\sigma_k(D_{x,y})$ (see e.g.~\cite[Corollary 8.6.2]{MatrixComputations}). Therefore, by the compactness of $F_\delta$, we can set
\[ s_\delta= \inf \{ \sigma_k(D_{x,y}) : x,y \in F_\delta\} > 0. \]
If we now fix $x \in Y$ and consider the set
	\[\begin{split} F_{\delta, x} = \Big\{ y \in Y : &\underset{\alpha \in \overline{B_m} (0,1)}{\exists}\ \  \|\phi_\alpha(x) - \phi_\alpha(y)\|
  \leq \frac{1}{3k }\|\phi_\alpha(Tx) - \phi_\alpha(Ty)\|\\
& \text{ and } \|\phi_\alpha(Tx) - \phi_\alpha(Ty)\| \geq \delta  \Big\}, \end{split}\]
then
\begin{equation}\label{eq:sigma c_delta}\inf\{ \sigma_k(D_{x,y}) : y \in F_{\delta, x}\} \geq s_\delta,
\end{equation}
with the bound independent of $x \in Y$. Let  $\{ B(y_i, \eps_i)\}_{i \in I}$ be a finite or countable cover of $Y$ with $y_i \in Y$. Set $J = \{ i\in I : B(y_i, \eps_i) \cap F_{\delta, x} \neq \emptyset \}$ and for $i \in J$ choose an arbitrary $z _i \in F_{\delta, x} \cap B(y_i, \eps_i)$. Clearly, $\{ B(z_i, 2\eps_i)\}_{i \in J}$ is a cover of $F_{\delta, x}$. If $\eps \leq \frac{\delta}{3k}$, then by the definition of $F_{\delta, x}$, Lemma \ref{lem: general cover bound}\ref{it:Y cover} and \eqref{eq:sigma c_delta}, there exists $D=D(Y)$ such that
\[\begin{split}
\Leb&\big( \big\{ \alpha \in \overline{B_m} (0,1) : \underset{y \in Y}{\exists}\ \|\phi_\alpha(x) - \phi_\alpha(y)\| \leq \eps \text{ and } \|\phi_\alpha(Tx) - \phi_\alpha(Ty)\| \geq \delta\big\} \big) \\
& \leq \Leb \big( \big\{ \alpha \in \overline{B_m} (0,1) : \underset{y \in F_{\delta,x}}{\exists}\  \|\phi_\alpha(x) - \phi_\alpha(y)\| \leq \eps \big\} \big) \\
& \leq D \sum \limits_{i \in J}\left(\frac{(\eps + 2\eps_i)^{\beta}}{\sigma_k(D_{x,z_i})}\right)^k \leq \frac{D}{s_\delta^k} \sum \limits_{i \in J} (\eps + 2\eps_i)^{\beta k} \leq \frac{D 2^{\beta k}}{s_\delta^k} \sum \limits_{i \in I} (\eps + \eps_i)^{\beta k}.
\end{split}\]
\end{proof}

\section{Deterministic predictability -- proof of Theorem~\ref{thm:predict_determ}}\label{sec:proof_takens}

We begin by proving Proposition~\rm\ref{prop:determ_predict_equiv}, which shows the continuity of the prediction map for continuous deterministically predictable observables on compact spaces. 

\begin{proof}[Proof of Proposition~\rm\ref{prop:determ_predict_equiv}] The equivalence of assertions~(a)--(c) is obvious. Suppose $X$ is compact and $T$ and $h$ are continuous. For every closed set $F \subset \R^k$, we have
\[
S^{-1}(F) = \phi((S\circ \phi)^{-1}(F)) = \phi((\phi\circ T)^{-1}(F)).
\]
Since $\phi\circ T$ is continuous and $X$ is compact, the set $(\phi\circ T)^{-1}(F)$ is compact. Hence, by the continuity of $\phi$, the set $S^{-1}(F)$ is compact. This shows that the map $S$ is continuous.
\end{proof}

We will also repeatedly use the following lemma.

\begin{lem}\label{lem:lipschitz decomposition}
Let $T\colon X \to X$ be a locally Lipschitz map on a set $X \subset \R^N$. Let $h_1,\ldots, h_m \colon X \to \R$ be locally $\beta$-H\"older observables and let $k \in \N$. Then $X$ can be presented as a countable union $X = \bigcup_{n=1}^\infty X_n$ of bounded sets $X_n$, such that $X_n \subset X_{n+1}$ for $n \in \N$, the maps $T, T^2, \ldots, T^{k}$ are Lipschitz on $X_n$ and $h_1, \ldots, h_m$ are $\beta$-H\"older on each set $T^{i}(X_n)$, $i = 0, \ldots, k$. If $X$ is Borel, then the sets $X_n$ can be also chosen to be Borel.
\end{lem}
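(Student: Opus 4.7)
The plan is to construct the sets $X_n$ by imposing a finite list of uniformity conditions that simultaneously bound $\|T^i x\|$, $|h_j(T^i x)|$, a local Lipschitz/Hölder constant, and the reciprocal of a local radius, all by the same $n$. The key preliminary observation is that, thanks to continuity of $T$ at each of the finitely many points $x, Tx, \ldots, T^{k-1}x$, one can shrink a ball around $x$ enough to witness the local Lipschitz/Hölder behaviour of all iterates simultaneously. Concretely, for every $x \in X$ there exist $r(x), L(x) > 0$ such that for every $i = 0, \ldots, k$ the iterate $T^i$ is Lipschitz on $X \cap B(x, r(x))$ with constant $L(x)$, and each $h_j$ is $\beta$-Hölder on $X \cap B(T^i x, r(x))$ with constant $L(x)$, with $T^i(X \cap B(x, r(x))) \subset B(T^i x, r(x))$.

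Given this, I would set
\[
X_n = \bigl\{ x \in X : \|T^i x\| \le n,\ |h_j(T^i x)| \le n \text{ for all } i \le k,\ j \le m,\ r(x) \ge 1/n,\ L(x) \le n \bigr\}.
\]
Each $X_n$ is bounded, the sequence is increasing, and $X = \bigcup_n X_n$ because the quantities in the definition are finite at every individual point. The required Lipschitz bound on $T^i|_{X_n}$ follows from a standard near/far dichotomy: if $x, y \in X_n$ satisfy $\|x-y\| < 1/n \le r(x)$, then $\|T^i x - T^i y\| \le L(x)\|x-y\| \le n\|x-y\|$; otherwise $\|T^i x - T^i y\| \le \|T^i x\| + \|T^i y\| \le 2n \le 2n^2 \|x-y\|$. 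An analogous split, using that $h_j$ is $\beta$-Hölder on $B(T^i x, r(x)) \cap X$ (which contains the image under $T^i$ of small neighbourhoods within $X_n$), together with the uniform pointwise bound $|h_j(T^i z)| \le n$ on $X_n$, shows that each $h_j$ is $\beta$-Hölder on $T^i(X_n)$.

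The main technical point I anticipate is ensuring that $X_n$ is Borel when $X$ is Borel, since the auxiliary functions $r(x), L(x)$ arise from pointwise choices and need not be Borel. I would bypass this by rewriting the defining conditions as intrinsic predicates, for instance by replacing ``$r(x) \ge 1/n$ and $L(x) \le n$'' with ``$T^i$ is $n$-Lipschitz on $X \cap B(x, 1/n)$ for every $i = 1, \ldots, k$'' and the analogous Hölder statement for $h_j$ at $T^i x$. Using separability of $X$ and continuity of each $T^i$, the universal quantifiers in these predicates can be restricted to a fixed countable dense subset of $X$; this rewrites each predicate as a countable intersection of Borel sets, making $X_n$ Borel. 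Apart from this bookkeeping, the proof amounts to a two-case computation.
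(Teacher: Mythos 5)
Your proof is correct but takes a genuinely different route from the paper's. The paper's proof uses the hereditarily Lindelöf property of $\R^N$ to extract a countable open cover $\mathcal V$ of $X$ by bounded sets on which $T$ is Lipschitz and the $h_j$ are H\"older, forms the countable family $\mathcal U = \{V_0 \cap T^{-1}(V_1) \cap \dots \cap T^{-k}(V_k) : V_i \in \mathcal V\}$ so that $T^i$ is automatically Lipschitz on each $U \in \mathcal U$ and each $h_j$ is H\"older on $T^i(U)$, and then sets $X_n = X \cap (U_1 \cup \dots \cup U_n)$; Borel-ness comes for free since each $U_j$ is open in $X$, and the paper refers to BGS20 for the remaining details. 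Your approach instead defines $X_n$ directly by pointwise uniformity bounds and verifies the required Lipschitz/H\"older property via the near/far dichotomy, using boundedness and the pointwise bounds $\|T^ix\|, |h_j(T^ix)| \le n$ to dispatch the far case. This makes the "finite union is still Lipschitz" step explicit and self-contained rather than delegated to a cited reference, which is a genuine gain; the price you pay is that you must rewrite the pointwise conditions $r(x)\ge 1/n$, $L(x)\le n$ as intrinsic predicates and argue Borel-ness via a countable dense subset and continuity of $T^i$ and $h_j$, rather than getting it for free from the openness of the covering sets. One small overclaim: your preliminary condition $T^i(X\cap B(x,r(x)))\subset B(T^ix,r(x))$ with the same radius on both sides need not hold when the local Lipschitz constant exceeds $1$, but as your own near/far argument shows, this inclusion is never actually used (the H\"older case splits directly on $\|T^ix - T^iy\|$), so it should simply be dropped from the list of requirements on $r(x),L(x)$.
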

\begin{proof}
Using local Lipschitz and H\"older conditions together with the fact that $\R^N$ is a hereditarily Lindelöf space, one can find a countable open cover $\mathcal V$ of $X$ by bounded sets, such that for every $V \in \mathcal V$, the map $T$ is Lipschitz on $V$ and $h, h_1, \ldots, h_m$ are $\beta$-H\"older on $V$. Let $\mathcal U$ be the collection of all sets of the form $U = V_0 \cap T^{-1}(V_1) \cap \ldots \cap T^{-k}(V_k)$, where $V_0, \ldots, V_k \in \mathcal V$. Taking $X_n = X \cap \left( U_1 \cup \ldots 
\cup U_n \right)$, where $\mathcal{U} = \{U_n \}_{n=1}^\infty$, gives the desired family of sets. See the proof of \cite[Theorem 4.3]{BGS20} for more details.
\end{proof}

Let us turn now to the proof of the deterministic time-delay prediction theorem (Theorem~\ref{thm:predict_determ}). We will actually prove the following extended version of the result. Recall that $\mathcal H^s$, $s > 0$, denotes the $s$-dimensional Hausdorff measure (see Definition~\ref{defn:dim}).

\begin{thm}[{\bf Deterministic time-delay prediction theorem -- extended version}{}]\label{thm:predict_determ_ext}

Let $T\colon X \to X$ be a locally Lipschitz map on a set $X \subset \R^N$. Fix $k \in \N$ and $\beta \in (0,1]$ such that $\mH^{\beta k}(X \times X) = 0$. Let $\{h_1,\ldots, h_m\}$ be a $2k$-interpolating family in $X$ consisting of locally $\beta$-H\"older functions. Let $h \colon X \to \R$ be a locally $\beta$-H\"older observable. Then the observable $h_\alpha = h + \sum_{j=1}^m \alpha_j h_j$ is deterministically $k$-predictable for Lebesgue-almost every $\alpha = (\alpha_1, \ldots, \alpha_m) \in \R^m$. Furthermore, if $X$ is compact, then the prediction map $S_\alpha$  corresponding to $h_\alpha$ is continuous for Lebesgue-almost every $\alpha \in \R^m$.
\end{thm}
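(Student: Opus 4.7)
The plan is to apply the probe-set approach of Remark~\ref{rem:probe set} directly to the interpolating family $\{h_1,\ldots,h_m\}$ provided by the hypothesis, and to show that the set of parameters $\alpha \in \R^m$ for which $h_\alpha = h + \sum_{j=1}^m \alpha_j h_j$ fails to be deterministically $k$-predictable has zero Lebesgue measure. The continuity statement for compact $X$ will then be automatic from Proposition~\ref{prop:determ_predict_equiv}, since $h_\alpha$ (as a sum of locally $\beta$-H\"older functions) and $T$ (being locally Lipschitz) are continuous.

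I would first invoke Lemma~\ref{lem:lipschitz decomposition} to write $X = \bigcup_n X_n$ as an increasing union of bounded subsets on which $T, T^2, \ldots, T^k$ are Lipschitz and $h, h_1, \ldots, h_m$ are $\beta$-H\"older on each iterate $T^i(X_n)$. By countable subadditivity, it then suffices to show that for every $n \in \N$ and $\delta > 0$ the set
\[
\mathcal B_{n,\delta} = \{\alpha \in \overline{B_m}(0,1) : \exists\, (x,y) \in X_n \times X_n,\ \phi_\alpha(x) = \phi_\alpha(y) \text{ and } \|\phi_\alpha(Tx) - \phi_\alpha(Ty)\| > \delta\}
\]
has zero Lebesgue measure. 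By Corollary~\ref{cor:crucial}, every such witness pair satisfies $\sigma_k(D_{x,y}) > 0$, so with $Z^{(N)} = \{(x,y) \in X_n \times X_n : \sigma_k(D_{x,y}) \ge 1/N\}$ one has $\mathcal B_{n,\delta} \subset \bigcup_{N} \mathcal B^{(N)}_{n,\delta}$, where $\mathcal B^{(N)}_{n,\delta}$ restricts the quantifier to $Z^{(N)}$. Since $\phi_\alpha(x) = \phi_\alpha(y)$ trivially yields $\|\phi_\alpha(x) - \phi_\alpha(y)\| \le \eps$ for every $\eps > 0$, there is the inclusion into the relaxed set $\tilde{\mathcal B}^{(N)}_{n,\eps} = \{\alpha \in \overline{B_m}(0,1) : \exists\, (x,y) \in Z^{(N)},\ \|\phi_\alpha(x) - \phi_\alpha(y)\| \le \eps\}$.

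Next, applying Lemma~\ref{lem: general cover bound}\,(ii) with $Z = Z^{(N)}$ (which inherits the Lipschitz/H\"older regularity from $X_n \times X_n$) and using $\sigma_k(D_{x_i,y_i}) \ge 1/N$ at the centres yields, for any cover $\{B((x_i,y_i),\eps_i)\}_{i \in I}$ of $Z^{(N)}$ with centres in $Z^{(N)}$, the estimate
\[
\Leb\bigl(\tilde{\mathcal B}^{(N)}_{n,\eps}\bigr) \le D\, N^k \sum_{i \in I} (\eps + \eps_i)^{\beta k}.
\]
The hypothesis $\mathcal H^{\beta k}(X \times X) = 0$ provides, for every $\eta > 0$, covers of $Z^{(N)}$ with $\sum_i \eps_i^{\beta k} < \eta$. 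The plan is to couple $\eps$ with the choice of cover so that $\sum_i (\eps + \eps_i)^{\beta k}$ can be made arbitrarily small, forcing $\Leb(\mathcal B^{(N)}_{n,\delta}) = 0$; a countable union over $n$, over $\delta = 1/j$, and over $N$ then completes the proof.

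The main obstacle is precisely this final cover step. A generic $\mathcal H^{\beta k}$-cover may have infinitely many balls of radii $\eps_i \ll \eps$, each contributing $(\eps+\eps_i)^{\beta k} \sim \eps^{\beta k}$, so that the bound $\sum_i (\eps + \eps_i)^{\beta k}$ can diverge if taken naively. The resolution is to merge the subcollection of small balls by re-covering their union by finitely many balls of radius $\eps$, using the boundedness of $Z^{(N)} \subset X_n \times X_n$, and to balance the cost of this merger against the Hausdorff-summable contribution of the remaining (large) balls. This is the only place where the gap between the Hausdorff and box-counting hypotheses requires genuine care; once managed, letting $\eta \to 0$ closes the argument.
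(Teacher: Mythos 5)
Your skeleton is correct and matches the paper's: reduce to $\alpha \in \overline{B_m}(0,1)$, decompose $X = \bigcup_n X_n$ via Lemma~\ref{lem:lipschitz decomposition}, use Corollary~\ref{cor:crucial} to stratify the witness pairs into sets $Z_{n,j} = \{(x,y) \in X_n \times X_n : \sigma_k(D_{x,y}) > 1/j\}$, cover $Z_{n,j}$ using $\mH^{\beta k}(X\times X)=0$, apply Lemma~\ref{lem: general cover bound}(ii), and finish by Proposition~\ref{prop:determ_predict_equiv} for continuity on compact $X$. Up to there you are on the paper's track.

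The gap is the one you yourself flag, but the resolution you propose is wrong. You relaxed the event $\phi_\alpha(x)=\phi_\alpha(y)$ to $\|\phi_\alpha(x)-\phi_\alpha(y)\| \le \eps$ with $\eps > 0$, and then found that the resulting bound $D\,N^k\sum_i(\eps+\eps_i)^{\beta k}$ need not be small, because a Hausdorff cover can have infinitely many balls with $\eps_i \ll \eps$, each contributing $\sim \eps^{\beta k}$. Your proposed fix -- re-cover the union of the small balls by finitely many balls of radius $\eps$ using boundedness of $X_n \times X_n$ -- does not close the argument: covering a bounded set in $\R^{2N}$ by radius-$\eps$ balls requires on the order of $\eps^{-2N}$ balls, giving a contribution $\sim \eps^{\beta k - 2N}$, which diverges as $\eps \to 0$ unless $\beta k > 2N$, and no such assumption is available (it would essentially re-import a box-counting hypothesis, which is precisely what the statement avoids). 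In fact the entire difficulty is self-imposed: Lemma~\ref{lem: general cover bound} is stated for all $0 \le \eps \le 1$, and you should simply take $\eps = 0$, since the event $\phi_\alpha(x)=\phi_\alpha(y)$ is exactly $\|\phi_\alpha(x)-\phi_\alpha(y)\| \le 0$. With $\eps=0$ the bound reads $\Leb(A_{n,j}) \le D\,j^{-k}\sum_i \eps_i^{\beta k}$, and since $\mH^{\beta k}(X\times X)=0$ supplies covers with $\sum_i \eps_i^{\beta k}$ arbitrarily small, you immediately get $\Leb(A_{n,j})=0$; that is exactly what the paper does. So the approach is right, but the step you identified as the crux must be handled by $\eps=0$, not by merging balls.
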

\begin{proof}[Proof of Theorem~{\rm\ref{thm:predict_determ}} assuming Theorem~{\rm\ref{thm:predict_determ_ext}}]

Since a countable intersection of prevalent sets is prevalent (see e.g.~\cite{Prevalence92}), we can fix a number $k > 2\lbdim X$. Note that by \eqref{eq:hdim_mbdim_bdim} and \cite[Lemma~A1]{ORS16},
\begin{equation}\label{eq:product dim comp}
\hdim(X \times X) \leq \lbdim(X \times X) = 2\lbdim X  < k,
\end{equation}
hence $\mH^k(X \times X) = 0$ and Theorem \ref{thm:predict_determ_ext} can be applied with $\beta = 1$. The prevalence of deterministically predictable observables in the space $\Lip_{loc}(X)$ follows from Remark \ref{rem:probe set}. 
\end{proof}

\begin{proof}[Proof of Theorem \rm\ref{thm:predict_determ_ext}]
We will prove that for Lebesgue almost every $\alpha \in \R^m$,
\begin{equation}\label{eq: dtrm predict extended} \text{for every } x, y \in X, \text{ if } \phi_\alpha(x) = \phi_\alpha(y), \text{ then }\phi_\alpha(Tx) = \phi_\alpha(Ty).
\end{equation}
Without loss of generality one can show \eqref{eq: dtrm predict extended} for almost every $\alpha$ in the closed unit ball $\overline{B_m} (0,1)$(cf.~\cite[Proof of Theorem~3.1]{BGS20}). Let $X = \bigcup_{n=1}^\infty X_n$ be the decomposition from Lemma~\ref{lem:lipschitz decomposition} (so that we are in position to invoke Lemma~\ref{lem: general cover bound} with $Y=X_n$). Note that in order to establish \eqref{eq: dtrm predict extended}, it suffices to prove $\Leb(A_{n}) = 0$ for every $n \in \N$, where
\[ A_{n} = \big\{ \alpha \in \overline{B_m} (0,1) : \underset{x,y \in X_{n}}{\exists}\  \phi_\alpha(x) = \phi_\alpha(y) \text{ and } \phi_\alpha(Tx) \neq \phi_\alpha(Ty)  \big\}. \]
By Corollary \ref{cor:crucial}, $\sigma_k(D_{x,y}) > 0$ for $x,y \in X$  and $\alpha \in \R^m$ such that $\phi_\alpha(x) = \phi_\alpha(y)$ and $\phi_\alpha(Tx) \neq \phi_\alpha(Ty)$.
 Therefore, defining for $n,j \in \N$ the sets
\[ Z_{n,j} = \Big\{ (x,y) \in X_n \times X_n : \sigma_k(D_{x,y}) > \frac{1}{j} \Big\} \]
and
\[ A_{n, j} = \big\{ \alpha \in \overline{B_m} (0,1) : \underset{(x,y) \in Z_{n,j}}{\exists}\  \phi_\alpha(x) = \phi_\alpha(y) \text{ and } \phi_\alpha(Tx) \neq \phi_\alpha(Ty)  \big\}, \]
we have
\[ A_{n} = \bigcup \limits_{j=1}^{\infty} A_{n, j}.\]
Thus, our goal is to prove\footnote{ Note that we do not check directly the measurability of the sets $A_{n}$ and $A_{n, j}$. However, it follows from the proof that their outer Lebesgue measure is zero, hence they are indeed Lebesgue measurable.}
\begin{equation}\label{eq:ndj goal}
\Leb(A_{n,j}) = 0 \quad \text{for every } n,j \in \N.
\end{equation}
To this end, use Lemma~\ref{lem: general cover bound} for $Y = X_n$ to find a suitable number $D = D(X_n)$. Fix $\rho > 0$. By assumption, 
\[
\mH^{\beta k}(Z_{n,j}) \leq\mH^{\beta k}(X_n \times X_n) \le \mH^{\beta k}(X \times X) = 0,
\]
so there exists a countable cover of $Z_{n,j}$ by balls $\{B((x_i, y_i), \eps_i)\}_{i \in \N}$ of radii $\eps_i > 0$ with centres $(x_i, y_i) \in Z_{n,j}$, such that
\[
\sum \limits_{i=1}^\infty \eps_i^{\beta k} \leq \rho.
\]
By Lemma~\ref{lem: general cover bound}\ref{it:YxY cover} applied with $\eps=0$, we have
\[ \Leb (A_{n,j}) \leq \frac{D}{j^k} \sum \limits_{i =1}^\infty\eps_i^{\beta k} \leq \frac{D\rho}{j^k}.\]
As for a fixed $n$ and $j$ we can take $\rho$ arbitrarily small, the above yields \eqref{eq:ndj goal}. This completes the proof of \eqref{eq: dtrm predict extended}, which implies the existence of the prediction map $S_\alpha$ for almost every $\alpha \in \R^m$. The continuity of $S_\alpha$ for a compact $X$ follows from Proposition~\ref{prop:determ_predict_equiv}.
\end{proof}

\section{Almost sure predictability -- proofs of Theorems~\ref{thm:almost_sure_predict_equiv} and \ref{thm:predict_prob}}\label{sec:proof_predict_prob}

We start this section by proving Proposition~\ref{prop:FS}, describing a relation of $\chi_\eps$ and $\sigma_\eps$ from Definition~\ref{def:dynamical predictability} to the Farmer--Sidorowich algorithm for ergodic systems.

\begin{proof}[Proof of Proposition~\rm\ref{prop:FS}] 

Consider $x\in X$, $y \in \R^k$, $\eps > 0$ and the $k$-delay coordinate map $\phi$ corresponding to $h$. Note that since $\mu$ is $T$-invariant, we have $\int |h|\circ T^i \, d\mu = \int |h| \, d\mu$ for $i \in \N$, so the assumption $h \in L^1(\mu)$ implies $\phi\circ T \in L^1(\mu)$. 

Since $\mu$ is $T$-invariant and ergodic, by the Birkhoff ergodic theorem (see e.g.~\cite[Theorem~1.14]{W82}) we obtain
\begin{equation}\label{eq:pred_to_chi}
\begin{aligned}
\Pred_{x,n,\eps}(y) &= \frac{\sum_{i=0}^{n-1}\mathds{1}_{B(y, \eps)} \circ \phi(T^i(x)) \; \phi\circ T(T^i(x))/n}{\sum_{i=0}^{n-1}\mathds{1}_{B(y, \eps)} \circ \phi(T^i(x))/n}\\
&\xrightarrow[n \to \infty]{} \frac{\int \mathds{1}_{B(y, \eps)} \circ\phi \cdot \phi\circ T \, d\mu}{\int \mathds{1}_{B(y, \eps)} \circ\phi \, d\mu} = \frac{\int_{\phi^{-1}(B(y,\eps))} \phi\circ T \, d\mu}{\mu(\phi^{-1}(B(y,\eps)))} = \chi_{\eps}(y)
\end{aligned}
\end{equation}
for $\mu$-almost every $x \in X$, $\phi_*\mu$-almost every $y \in \R^k$ and $\eps > 0$ (in particular, for sufficiently large $n$ we have $\II_n = \II_n(x,y,\eps) \neq \emptyset$ and hence $\Pred_{x,n,\eps}(y)$ is well-defined). This proves the first assertion of the proposition.

To show the second assertion, note that for $\mu$-almost every $x \in X$, $\phi_*\mu$-almost every $y \in \R^k$, $\eps > 0$ and sufficiently large $n$, 
\begin{align*}
\Var_{x,n,\eps}(y) &= \frac{1}{\# \II_n} \sum_{i \in \II_n} \| z_{i+1}(x) -\chi_{\eps}(y) + \chi_{\eps}(y) - \Pred_{x,n,\eps}(y)\|^2\\
&= \frac{1}{\# \II_n} \sum_{i \in \II_n} \| z_{i+1}(x) -\chi_{\eps}(y)\|^2 + \frac{1}{\# \II_n} \sum_{i \in \II_n} \|\chi_{\eps}(y) - \Pred_{x,n,\eps}(y)\|^2\\
&+ \frac{2}{\# \II_n} \sum_{i \in \II_n} \langle z_{i+1}(x) -\chi_{\eps}(y), \chi_{\eps}(y) - \Pred_{x,n,\eps}(y)\rangle = I_1 + I_2 + I_3.
\end{align*}
Since $h \in L^2(\mu)$, we have $\phi\circ T \in L^2(\mu)$ (as $\int |h|^2\circ T^i \, d\mu = \int |h|^2 \, d\mu$ for $i \in \N$), so $\|\phi\circ T - \chi_{\eps}(y)\|^2 \in L^1(\mu)$ and, analogously as in \eqref{eq:pred_to_chi}, by the Birkhoff ergodic theorem we obtain
\[
I_1 \xrightarrow[n \to \infty]{} \frac{\int_{\phi^{-1}(B(y,\eps))} \|\phi\circ T  - \chi_{\eps}(y)\|^2 \, d\mu}{\mu(\phi^{-1}(B(y,\eps)))} = (\sigma_\eps(y))^2
\]
for $\mu$-almost every $x$, $\phi_*\mu$-almost every $y$ and $\eps > 0$. By \eqref{eq:pred_to_chi},
\[
I_2 = \|\chi_{\eps}(y) - \Pred_{x,n,\eps}(y)\|^2 \xrightarrow[n \to \infty]{} 0
\]
for $\mu$-almost every $x$, $\phi_*\mu$-almost every $y$ and $\eps > 0$. Furthermore, by the Schwarz inequality,
\[
|I_3| \le 2\|\chi_{\eps}(y) - \Pred_{x,n,\eps}(y)\| \frac{1}{\# \II_n} \sum_{i \in \II_n} \|z_{i+1}(x) -\chi_{\eps}(y)\|.
\]
Again by the Birkhoff ergodic theorem (as $\|\phi\circ T(\cdot) - \chi_{\eps}(y)\| \in L^1(\mu)$), we have
\[
\frac{1}{\# \II_n} \sum_{i \in \II_n} \|z_{i+1}(x) -\chi_{\eps}(y)\|\xrightarrow[n \to \infty]{} \frac{\int_{\phi^{-1}(B(y,\eps))} \|\phi\circ T  - \chi_{\eps}(y)\| \, d\mu}{\mu(\phi^{-1}(B(y,\eps)))} 
\]
for $\mu$-almost every $x \in X$, $\phi_*\mu$-almost every $y \in \phi(X)$ and $\eps > 0$,
so $|I_3|\xrightarrow[n \to \infty]{} 0$ by \eqref{eq:pred_to_chi}.
We conclude that $\Var_{x,n,\eps}(y) \xrightarrow[n \to \infty]{} 
(\sigma_\eps(y))^2$ for $\mu$-almost every $x \in X$, $\phi_*\mu$-almost every $y \in \R^k$ and $\eps > 0$, which shows the second assertion of the proposition.
\end{proof}

Now we prove the results on the properties of continuous almost sure deterministically predictable observables for continuous systems and the relations between the two notions of almost sure predictability (Proposition~\ref{prop:almost_sure_predict_properties} and Theorem~\ref{thm:almost_sure_predict_equiv}).

\begin{proof}[Proof of Proposition~\rm\ref{prop:almost_sure_predict_properties}]
First, we prove assertion~(i).  Since $X_h$ is Borel, by the regularity of the measure $\mu$ we can assume that it is $\sigma$-compact (i.e.~a countable union of compact sets). Then, as $\phi$ is continuous by assumption, the set $\phi(X_h)$ is $\sigma$-compact, hence Borel. Moreover, for every closed set $F \subset \R^k$, we have
\[
S^{-1}(F) = \phi(X_h \cap (S\circ \phi)^{-1}(F)) = \phi(X_h \cap (\phi\circ T)^{-1}(F)).
\]
Since $\phi\circ T$ is continuous, the set $(\phi\circ T)^{-1}(F)$ is closed, so $X_h \cap (\phi\circ T)^{-1}(F)$ is $\sigma$-compact. Hence, by the continuity of $\phi$, the set $S^{-1}(F)$ is $\sigma$-compact, hence Borel. This shows that the map $S$ is Borel and proves assertion~(i). 

To show assertion~(ii), assume $\phi\circ T \in L^1(\mu)$ and note that this holds in particular if $h$ is bounded, since then $\|\phi\circ T\| \le k \sup|h|$. Then by the almost sure deterministic predictability of $h$ we have $S \in L^1(\phi_*\mu)$, the integral differentiation theorem (see e.g.~\cite[Theorem~2.9.8]{Federer-book} or \cite[Lemma~4.1.2]{LY85I}) gives
\begin{equation*}\label{eq:chi_limit}
\begin{aligned}
\chi_{\eps}(y) &=  \frac{1}{\mu(\phi^{-1}(B(y, \eps)))} \int \limits_{\phi^{-1}(B(y, \eps))} \phi\circ T \, d\mu,\\
&=  \frac{1}{\mu(\phi^{-1}(B(y, \eps)))} \int \limits_{\phi^{-1}(B(y, \eps))} S\circ \phi \, d\mu,\\
&=  \frac{1}{\phi_*\mu(B(y, \eps)))} \int \limits_{B(y, \eps)} S \, d\phi_*\mu \xrightarrow[\eps\to 0]{} S(y)
\end{aligned}
\end{equation*}
for $\phi_*\mu$-almost every $y \in \phi(X)$ and $\eps > 0$. This proves assertion~(ii).

Finally, note that if $\mu$ is $T$-invariant, then the condition $h \in L^1(\mu)$ implies $\phi\circ T \in L^1(\mu)$ (see the proof of Proposition~\ref{prop:FS}). 

\end{proof}

\begin{proof}[Proof of Theorem~\rm\ref{thm:almost_sure_predict_equiv}]
To show assertion~(i), suppose $\phi\circ T \in L^2(\mu)$ and note (similarly as in the proof of Proposition~\ref{prop:almost_sure_predict_properties}) that this holds in particular if $h$ is bounded. For $\phi_*\mu$-almost every $y \in \phi(X)$ and $\eps > 0$, by the almost sure deterministic predictability of $h$ we have
\begin{align*}
(\sigma_{\eps}(y))^2
&= \frac{1}{\mu(\phi^{-1}(B(y, \eps)))} \int \limits_{\phi^{-1}(B(y, \eps))} \|\phi\circ T - \chi_{\eps}(y)\|^2d\mu\\
&=\frac{1}{\mu(\phi^{-1}(B(y, \eps)))} \int \limits_{\phi^{-1}(B(y, \eps))} \|S\circ \phi - \chi_{\eps}(y)\|^2d\mu\\
&=\frac{1}{\phi_*\mu(B(y, \eps))} \int \limits_{B(y, \eps)} \|S - \chi_{\eps}(y)\|^2 d\phi_*\mu.
\end{align*}
Since by the Schwarz inequality,
\[
\|S - \chi_{\eps}(y)\|^2 = \|S - S(y) + S(y) -\chi_{\eps}(y)\|^2 \le 2 \big(\|S - S(y)\|^2 + \|S(y) - \chi_{\eps}(y)\|^2\big),
\]
we obtain
\begin{align*}
(\sigma_{\eps}(y))^2
&\le\frac{2}{\phi_*\mu(B(y, \eps))} \int \limits_{B(y, \eps)} \|S - S(y)\|^2 d\phi_*\mu\\
&+ \frac{2}{\phi_*\mu(B(y, \eps))} \int \limits_{B(y, \eps)} \|S(y) - \chi_{\eps}(y)\|^2 d\phi_*\mu = I_1 + I_2.
\end{align*}
By the almost sure deterministic predictability of $h$ we have $S \in L^2(\phi_*\mu)$, so $\|S - S(y)\|^2 \in L^1(\phi_*\mu)$. Hence,  the integral differentiation theorem provides
\[
I_1 \xrightarrow[\eps\to 0]{} 2\|S(y) - S(y)\|^2 = 0
\]
for $\phi_*\mu$-almost every $y \in \phi(X)$. Furthermore, by Proposition~\ref{prop:almost_sure_predict_properties}(ii),
\[
I_2 = 2\|S(y) - \chi_{\eps}(y)\|^2
\xrightarrow[\eps\to 0]{} 0
\]
for $\phi_*\mu$-almost every $y \in \phi(X)$. This shows that $h$ is almost surely $k$-predictable, proving assertion~(i). 

Now we prove assertion~(ii). In view of assertion~(i), it suffices to show that if  $h$ is almost surely $k$-predictable, then it is almost surely deterministically $k$-predictable. To do it, we use the properties of the system of conditional measures $\{\mu_y\}_{y \in \phi(X)}$ for the map $\phi$, in the sense of \cite[Definition~2.4]{BGS22}. Recall that $\mu_y$ is a (possibly zero) Borel measure on $\phi^{-1}(\{y\})$ for every $y \in \phi(X)$, 
$\mu_y$ is a Borel probability measure for $\phi_*\mu$-almost every $y \in \phi(X)$, and $\mu(A) = \int_{\phi(X)} \mu_y(A) d\phi_*\mu(y)$ for every $\mu$-measurable set $A \subset X$. By \cite[Corollary~3.3]{BGS22}, for $\phi_*\mu$-almost every $y \in \phi(X)$ there exists a set $X^{(y)} \subset \phi^{-1}(\{y\})$ of full $\mu_y$-measure, such that $\phi\circ T$ is constant on $X^{(y)}$. It follows that for $\phi_*\mu$-almost every $y \in \phi(X)$, we have $X^{(y)} \subset \tilde X_h$ for 
\[
\tilde X_h = \Big\{x \in X: \phi(Tx) = \int \phi(T z) \, d\mu_{\phi(x)}(z)\Big\}. 
\]
Note that the function $\phi(X) \ni y \mapsto \int \phi\circ T \, d\mu_y$ is $\phi_*\mu$-measurable by \cite[Theorem~2.5]{BGS22}, so the function $X \ni x \mapsto \int \phi\circ T \, d\mu_{\phi(x)}$ is $\mu$-measurable. Consequently, the set $\tilde X_h$ is $\mu$-measurable. Hence, by the properties of the system of conditional measures,
\[
\mu(X \setminus \tilde X_h) = \int_{\phi(X)} \mu_y(X \setminus \tilde X_h) \, d\phi_*\mu(y) \le \int_{\phi(X)} \mu_y(X \setminus  X^{(y)}) \, d\phi_*\mu(y) = 0,
\]
so the set $\tilde X_h$ has full $\mu$-measure. By the definition of $\tilde X_h$,  if $x_1, x_2 \in \tilde X_h$ and $\phi(x_1) = \phi(x_2)$, then $\phi\circ T(x_1) = \phi\circ T(x_2)$. The regularity of the measure $\mu$ implies that there exists a Borel set $X_h \subset \tilde X_h$ of full $\mu$-measure. This shows that $h$ is almost surely deterministically $k$-predictable, proving assertion~(ii).

As previously, note that if $\mu$ is $T$-invariant, then the condition $h \in L^2(\mu)$ implies $\phi\circ T \in L^2(\mu)$ (see the proof of Proposition~\ref{prop:FS}). 
\end{proof}

Now we prove the almost sure time-delay prediction theorem (Theorem~\ref{thm:predict_prob}). Again, we actually show its extended version. 

\begin{thm}[{\bf Almost sure time-delay prediction theorem -- extended version}{}]
\label{thm:predict_prob_ext}

Let $X \subset \R^N$ be a Borel set, $\mu$ a Borel probability measure on $X$ and $T\colon X \to X$ a locally Lipschitz map. Fix $k \in \N$ and $\beta \in (0,1]$ such that $\mu \perp \mH^{\beta k}$, and let $\{h_1,\ldots, h_m\}$ be a $2k$-interpolating family in $X$ consisting of locally $\beta$-H\"older functions. Let $h \colon X \to \R$ be a locally $\beta$-H\"older observable. Then for Lebesgue almost every $\alpha = (\alpha_1, \ldots, \alpha_m) \in \R^m$, the observable $h_\alpha = h + \sum_{j=1}^m \alpha_j h_j$ is almost surely deterministically $k$-predictable with respect to $\mu$, i.e.~there exists a prediction map $S_\alpha$ defined on a Borel set $X_\alpha \subset X$ of full $\mu$-measure. If $\mu$ is $T$-invariant, then $X_\alpha$ can be chosen to satisfy $T(X_\alpha) \subset X_\alpha$. If, additionally, $X$ is compact and $\mH^{\beta k}(\supp \mu) = 0$, then $S_\alpha$ is continuous for almost every $\alpha \in \R^m$.
\end{thm}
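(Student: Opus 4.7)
The plan is to adapt the cover argument from the deterministic Theorem~\ref{thm:predict_determ_ext} by replacing the direct $\mathcal H^{\beta k}$-estimate on $X\times X$ with a Fubini argument that integrates the base point against $\mu$, while the other point is controlled by the single-point cover bound of Lemma~\ref{lem: general cover bound}\ref{it:Y cover}. First I reduce, using that countable intersections of prevalent sets are prevalent, to Lebesgue-a.e.\ $\alpha\in\overline{B_m}(0,1)$, decompose $X=\bigcup X_n$ via Lemma~\ref{lem:lipschitz decomposition}, and use $\mu\perp\mathcal H^{\beta k}$ to fix a Borel set $Y\subset X$ with $\mu(Y)=1$ and $\mathcal H^{\beta k}(Y)=0$; set $Y_n=Y\cap X_n$.

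The target reduces to showing that, for Lebesgue-a.e.\ $\alpha$,
\[ \mu\bigl(\bigl\{x_2\in Y : \exists\,x_1\in Y,\ \phi_\alpha(x_1)=\phi_\alpha(x_2),\ \phi_\alpha(Tx_1)\ne\phi_\alpha(Tx_2)\bigr\}\bigr)=0, \]
so that $X_\alpha$ can be chosen as a Borel subset of $Y$ of full $\mu$-measure avoiding the analytic exceptional set (using regularity of $\mu$ to pass from analytic to Borel). By Fubini on $\Leb\otimes\mu$ (with outer Lebesgue measure, as in the proof of Theorem~\ref{thm:predict_determ_ext}), it is enough to show that for every fixed $x_2\in Y$ the corresponding set of $\alpha$ has Lebesgue measure zero. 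Fixing $x_2\in Y_n$, Corollary~\ref{cor:crucial} yields $\sigma_k(D_{x_1,x_2})>0$ whenever $x_1$ is bad; stratifying over $j,n'$, I reduce to
\[ \Leb\bigl(\bigl\{\alpha : \exists\,x_1\in Y_{n'},\ \sigma_k(D_{x_1,x_2})\ge 1/j,\ \phi_\alpha(x_1)=\phi_\alpha(x_2)\bigr\}\bigr)=0 \]
for each $j,n'$. Since $\{x_1\in Y_{n'}:\sigma_k(D_{x_1,x_2})\ge 1/j\}\subset Y_{n'}$ has $\mathcal H^{\beta k}$-measure zero, it admits covers by balls centered inside it with $\sum\eps_i^{\beta k}$ arbitrarily small; inserting this into Lemma~\ref{lem: general cover bound}\ref{it:Y cover} and letting $\eps\to 0$ gives the vanishing. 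The $T$-invariant case then follows by replacing $X_\alpha$ with $\bigcap_{i\ge 0}T^{-i}(X_\alpha)$, as in Remark~\ref{rem:almost_sure_deterministic predictability}.

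For the continuity assertion under compactness of $X$ and $\mathcal H^{\beta k}(\supp\mu)=0$, I rerun the argument with $Y:=\supp\mu$. This yields a Borel set $X_\alpha\subset\supp\mu$ of full $\mu$-measure satisfying a stronger, \emph{asymmetric} form of predictability: for every $x_1\in\supp\mu$ and every $x_2\in X_\alpha$ with $\phi_\alpha(x_1)=\phi_\alpha(x_2)$, one has $\phi_\alpha(Tx_1)=\phi_\alpha(Tx_2)$. Given a convergent sequence $\phi_\alpha(x_n)\to\phi_\alpha(x)$ with $x_n,x\in X_\alpha$, compactness of $X$ furnishes an accumulation point $\hat x\in\supp\mu$ of $\{x_n\}$ with $\phi_\alpha(\hat x)=\phi_\alpha(x)$; the asymmetric predictability (applied with $x_1=\hat x$ and $x_2=x$) forces $\phi_\alpha(T\hat x)=\phi_\alpha(Tx)$, whence $\phi_\alpha(Tx_n)\to\phi_\alpha(Tx)$ by continuity of $\phi_\alpha\circ T$ together with a standard subsequence double-limit argument, so $S_\alpha$ is continuous.

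The main obstacle is recognizing that the single-point cover bound Lemma~\ref{lem: general cover bound}\ref{it:Y cover} cannot be promoted to a uniform bound over $x$ (which would demand the stronger $\mathcal H^{\beta k}(X\times X)=0$ estimate used in Theorem~\ref{thm:predict_determ_ext}), and that the Fubini trick over $x_2\in Y$ trades this uniformity for a $\mu$-null set of exceptional base points. For continuity, the subtle point is to resist attempting symmetric predictability on $\supp\mu$ (which would require the unavailable $\mathcal H^{\beta k}((\supp\mu)^2)=0$) and instead exploit the inherent asymmetry of the Fubini argument, which is exactly strong enough to transfer the predictability condition to accumulation points inside $\supp\mu$.
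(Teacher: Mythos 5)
Your proof is correct and follows essentially the same route as the paper's: a Fubini argument over $\mu\otimes\Leb$ (one point integrated against $\mu$, the other controlled via the one-point cover bound of Lemma~\ref{lem: general cover bound}\ref{it:Y cover}), the Lipschitz/H\"older decomposition of Lemma~\ref{lem:lipschitz decomposition}, stratification by the singular-value threshold $\sigma_k(D_{x_1,x_2})\ge 1/j$ via Corollary~\ref{cor:crucial}, and for continuity a rerun with $Y=\supp\mu$ combined with a compactness/subsequence argument. The only cosmetic differences from the paper are that you phrase the continuity step via the ``asymmetric predictability'' property and a double-limit subsequence argument rather than the paper's explicit $\eps$--$\delta$ formulation \eqref{eq:continuity at x} followed by Fubini, and that you stratify additionally over the index $n'$ of the Lipschitz decomposition for $x_1$ (which, by monotonicity of the sets $X_n$, the paper implicitly absorbs into a single index); neither difference is substantive.
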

\begin{proof}[Proof of Theorem~{\rm\ref{thm:predict_prob}} assuming Theorem~{\rm\ref{thm:predict_prob_ext}}]

Again, since a countable intersection of prevalent sets is prevalent, we can fix a number $k > \hdim \mu$. Fix $\beta = 1$ and apply Theorem~\ref{thm:predict_prob_ext}, noting that $\hdim \mu < k$ implies $\mu \perp \mH^k$. As before, the prevalence of almost surely predictable observables follows from Remark~\ref{rem:probe set}. 
\end{proof}

\begin{proof}[Proof of Theorem~\rm\ref{thm:predict_prob_ext}]

The proof is similar to the proof of Theorem~\ref{thm:predict_determ_ext}. The main difference is that due to Fubini's theorem we can work varying $y$ for a fixed $x$ rather than varying a pair $(x,y)$. Hence, it suffices to consider covers of $X$ rather than covers of $X \times X$. Detailed arguments are given below.

Once again, note that it is enough to prove the theorem for almost every $\alpha \in \overline{B_m} (0,1)$. 
By the assumption $\mu \perp \mH^k$, there exists a Borel subset of $X$ of full $\mu$-measure and zero $\mH^k$-measure. Combining this with Lemma~\ref{lem:lipschitz decomposition}, we can find a non-decreasing sequence of bounded Borel sets $X_n$ of positive $\mu$-measure, satisfying  $\mu \left( \bigcup_{n=1}^\infty X_n \right) = 1$, $\mH^k \left( \bigcup_{n=1}^\infty X_n \right) = 0$, such that $T, T^2, \ldots, T^{k}$ are Lipschitz on $X_n$ and $h, h_1, \ldots, h_m$ are $\beta$-H\"older on each set $T^{i}(X_n)$, $i= 0, \ldots, k$. Denote by 
\[
\mu_n = \frac{1}{\mu(X_n)}\mu|_{X_n}
\]
the normalized restriction of $\mu$ to $X_n$. By Theorem~\ref{thm:almost_sure_predict_equiv}, to prove the main assertion of the theorem, it is enough to show that for a fixed $n \geq 1$, for almost every $\alpha \in \overline{B_m} (0,1)$ there exists a Borel set $X_\alpha \subset X_n$ such that $\mu_n(X_\alpha) = 1$ and
\[
\text{if} \quad \phi_\alpha(x) = \phi_\alpha(y), \quad \text{then} \quad \phi_\alpha(Tx) = \phi_\alpha(Ty) \quad \text{for every } x,y \in X_\alpha. 
\]
To this end, it is enough to prove that for every $x \in X$ we have
\begin{equation}\label{eq:leb of bad points hdim} \Leb\big( \big\{ \alpha \in \overline{B_m} (0,1) : \underset{y \in X_n}{\exists}\  \phi_\alpha(x) = \phi_\alpha(y) \text{ and } \phi_\alpha(Tx) \neq \phi_\alpha(Ty) \big\} \big) = 0.
\end{equation}
Indeed, if \eqref{eq:leb of bad points hdim} holds, then by Fubini's theorem\footnote{ The measurability of the set to which Fubini's theorem is applied can be checked e.g.~using \cite[Lemma~2.4]{BGS20}, see the proof of \cite[Theorem~3.1]{BGS20}.} for the measure $\mu_n \otimes \Leb$,
\[ \mu_n \otimes \Leb \big( \big\{ (x,\alpha) \in X_n \times \overline{B_m} (0,1) : \underset{y \in X_n}{\exists}\  \phi_\alpha(x) = \phi_\alpha(y) \text{ and } \phi_\alpha(Tx) \neq \phi_\alpha(Ty)  \big\} \big) = 0 \]
and consequently,
\begin{equation}\label{eq:zero measure set}
\mu_n \big( \big\{ x \in X_n : \underset{y \in X_n}{\exists}\  \phi_\alpha(x) = \phi_\alpha(y) \text{ and } \phi_\alpha(Tx) \neq \phi_\alpha(Ty) \big\} \big) = 0
\end{equation}
for Lebesgue-almost every $\alpha \in \overline{B_m} (0,1)$.
Then a suitable set $X_\alpha \subset X_n$ of full $\mu_n$-measure can be defined as the complement of the zero measure set from \eqref{eq:zero measure set}. 

To prove \eqref{eq:leb of bad points hdim}, fix $x \in X$ and for $n,j \in \N$ define
\[ X_{n,j} = \Big\{ y \in X_n : \sigma_k(D_{x,y}) > \frac{1}{j} \Big\} \]
and
\[ A_{n, j} = \big\{ \alpha \in \overline{B_m} (0,1) : \underset{y \in X_{n,j}}{\exists}\  \phi_\alpha(x) = \phi_\alpha(y) \text{ and } \phi_\alpha(Tx) \neq \phi_\alpha(Ty)  \big\}. \]
Again, Corollary~\ref{cor:crucial} implies
\[ \big\{ \alpha \in \overline{B_m} (0,1) : \underset{y \in X_n}{\exists}\  \phi_\alpha(x) = \phi_\alpha(y) \text{ and } \phi_\alpha(Tx) \neq \phi_\alpha(Ty) \big\} = \bigcup \limits_{j=1}^\infty A_{n, j}. \]
Hence, to prove \eqref{eq:leb of bad points hdim}, it is enough to show $\Leb(A_{n, j}) = 0$ for every $n,j \in \N$, $x \in X_n$. Apply Lemma~\ref{lem: general cover bound} for $Y=X_n$ to find a suitable number $D = D(X_{n,j})$. Fix $\rho > 0$. Since by assumption,
\[
\mH^{\beta k}(X_{n,j}) \leq \mH^{\beta k} \Big( \bigcup \limits_{n=1}^\infty X_n \Big) = 0,
\]
there exists a countable cover of $X_{n,j}$ by balls $B(y_i, \eps_i)$, $i \in \N$, with $y_i\in X_{n,j}$ and such that
\[
\sum \limits_{i=1}^\infty \eps_i^{\beta k} \leq \rho.
\]
By Lemma~\ref{lem: general cover bound}\ref{it:Y cover} applied with $\eps=0$,
\[ \Leb (A_{n, j}) \leq Dj^{-k}\sum \limits_{i=1}^\infty\eps_i^{\beta k} \leq Dj^{-k} \rho.\]
As for fixed $n,j$ and $x$ we can take $\rho$ arbitrarily small, we have $\Leb (A_{n,j})=0$. This gives \eqref{eq:leb of bad points hdim} and completes the proof of the existence of the prediction map $S_\alpha$ for Lebesgue-almost every $\alpha$.

The forward invariance of the set $X_\alpha$ in the case when $\mu$ is $T$-invariant follows from Remark~\ref{rem:almost_sure_deterministic predictability}.

Suppose now that $X$ is compact and consider the question of the continuity of $S_\alpha$. Unlike in Theorem~\ref{thm:predict_determ_ext}, we cannot invoke Proposition~\ref{prop:determ_predict_equiv}, as the set $X_\alpha$ is not guaranteed to be compact. We therefore have to combine the arguments form the proof above and Proposition~\ref{prop:determ_predict_equiv} to conclude a stronger property. This requires a stronger assumption $\mH^{\beta k}(\supp \mu) = 0$ (in fact, as shown in Corollary \ref{cor:no continuity with hdim}, the assumption $\mu \perp \mH^{\beta k}$, or even $\hdim \mu < \beta k$, is too weak to guarantee the continuity of $S_\alpha$ for typical $\alpha$).

Set $Y =\supp\mu$. By definition, $\mH^{\beta k}(Y) = 0$. Moreover, $Y$ is compact, $T, T^2, \ldots, T^{k}$ are Lipschitz on $Y$ and $h, h_1, \ldots, h_m$ are $\beta$-H\"older on each set $T^{i}(Y)$, $i= 0, \ldots, k$. Therefore, we can repeat the proof of \eqref{eq:leb of bad points hdim}, replacing $X_n$ by $Y$ and $\mu_n$ by $\mu$, to obtain 
\begin{equation}\label{eq:Y_leb of bad points hdim} \Leb\big( \big\{ \alpha \in \overline{B_m} (0,1) : \underset{y \in Y}{\exists}\  \phi_\alpha(x) = \phi_\alpha(y) \text{ and } \phi_\alpha(Tx) \neq \phi_\alpha(Ty) \big\} \big) = 0
\end{equation}
for every $x \in Y$. We claim this implies that for every $x \in Y$,
\begin{equation}\label{eq:continuity at x} \underset{\eps > 0}{\forall}\ \underset{\delta > 0}{\exists}\ \underset{y \in Y}{\forall}\ \| \phi_\alpha(x) - \phi_\alpha(y)\| < \delta \Longrightarrow \| \phi_\alpha(Tx) - \phi_\alpha(Ty)\| < \eps
\end{equation}
holds for almost every $\alpha \in \overline{B_m} (0,1)$. Indeed, otherwise we can find $x \in Y$ and a Lebesgue-positive measure set of $\alpha \in \overline{B_m} (0,1)$, such that 
\[
\| \phi_\alpha(x) - \phi_\alpha(y_n)\| < \frac{1}{n} \quad \text{and} \quad \| \phi_\alpha(Tx) - \phi_\alpha(Ty)\| \ge  \eps
\]
for some sequence of points $y_n \in Y$ and a fixed $\eps > 0$. Then, by the compactness of $Y$ we can take a subsequence $y_{n_k} \to y$ for some $y \in Y$, so by the continuity of $\phi_\alpha$, we have $\phi_\alpha(x) = \phi_\alpha(y)$ and $ \phi_\alpha(Tx) \ne \phi_\alpha(Ty)$ for a Lebesgue-positive measure set of $\alpha \in \overline{B_m} (0,1)$, contradicting \eqref{eq:Y_leb of bad points hdim}. Hence, for every $x \in Y$, the property \eqref{eq:continuity at x} holds for almost every $\alpha \in \overline{B_m} (0,1)$. By Fubini's theorem for the measure $\mu \otimes \Leb$, for almost every $\alpha \in \overline{B_m} (0,1)$ there exists a set $Y_\alpha \subset Y$ of full $\mu$-measure, such that \eqref{eq:continuity at x} holds for every $x \in Y_\alpha$. 

Let $\tilde X_\alpha=X_\alpha\cap Y_\alpha$. Then $\tilde X_\alpha$ is a set of full $\mu$-measure, $S_\alpha$ is defined on $\phi_\alpha(\tilde X_\alpha)$ and \eqref{eq:continuity at x} holds for every $x \in \tilde X_\alpha$. We claim that $S_\alpha$ is continuous on $\phi_\alpha(\tilde X_\alpha)$. Indeed, suppose $z_i \in \phi_\alpha(\tilde X_\alpha)$ for $i \in \N$ and $z_i \to z$ for some $z \in  \phi_\alpha(\tilde X_\alpha)$. Let $y_i, x\in \tilde X_\alpha$ such that $\phi_\alpha(y_i)=z_i$ and $\phi_\alpha(x)=z$. Passing to a subsequence, we can assume $y_i\rightarrow y$ for some $y\in Y$. By the continuity of $\phi_\alpha$, we have $\phi_\alpha(y) = z = \phi_\alpha(x)$, so \eqref{eq:continuity at x} implies $\phi_\alpha(Ty) = \phi_\alpha(Tx)$. Hence, by the continuity of $T\circ \phi_\alpha$,
\[
S_\alpha(z_i) = S_\alpha(\phi_\alpha(y_i)) = \phi_\alpha(Ty_i) \to \phi_\alpha(Ty) =\phi_\alpha(Tx) = S_\alpha(\phi_\alpha(x)) = S_\alpha(z).
\]
This shows the continuity of $S_\alpha$ on $\phi_\alpha(\tilde X_\alpha)$ for almost every $\alpha \in \overline{B_m} (0,1)$.
\end{proof}

\begin{rem}
Similarly to the proof of \cite[Theorem 4.3]{BGS20}, the main part of Theorem~\ref{thm:predict_prob_ext} (almost sure predictability of prevalent observables) can be extended to the case when the measure $\mu$ is $\sigma$-finite. We leave the details to the reader.
\end{rem}

\section{SSOY Conjectures -- proofs of Theorems~\ref{thm:SSOY1} and~\ref{thm:SSOY2}}
\label{sec:proof_ssoy}

In this section we prove the general SSOY predictability conjecture (Theorem~\ref{thm:SSOY1}) and a part of SSOY prediction error conjecture, i.e.~the prediction error estimate (Theorem~\ref{thm:SSOY2}). Again, we prove extended versions of the theorems. 

The first result, extending Theorem~\ref{thm:SSOY1}, follows directly from Theorem~\ref{thm:predict_prob_ext}, applied for $\beta = 1$, and Theorem~\ref{thm:almost_sure_predict_equiv}.

\begin{thm}[{\bf General SSOY predictability conjecture -- extended version}]{}\label{thm:SSOY1_ext}
Let $X \subset \R^N$ be a compact set, $\mu$ a Borel probability measure on $X$ and  $T\colon X \to X$ a Lipschitz map. Fix $k \in \N$ and $\beta \in (0,1]$ such that $\mu \perp \mH^{\beta k}$. Let $\{h_1,\ldots, h_m\}$ be a $2k$-interpolating family on $X$ consisting of $\beta$-H\"older functions. Let $h \colon X \to \R$ be a $\beta$-H\"older observable. Then for almost every $\alpha \in \R^m$, the observable $h_\alpha = h + \sum_{j=1}^m \alpha_j h_j$ is almost surely predictable.
\end{thm}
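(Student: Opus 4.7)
The plan is to derive Theorem~\ref{thm:SSOY1_ext} as an almost immediate corollary of two results already established earlier in the paper: Theorem~\ref{thm:predict_prob_ext} (the extended almost sure time-delay prediction theorem) and Theorem~\ref{thm:almost_sure_predict_equiv}(ii) (the equivalence of the two notions of almost sure predictability on compact spaces). No new combinatorial or covering estimates are needed: the work has already been done in Sections~\ref{sec:proof_predict_prob} and~\ref{sec:proof_takens}, and all that remains is to check that the hypotheses of those two results are satisfied in the present setting.

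First I would verify the hypotheses for Theorem~\ref{thm:predict_prob_ext}. Compactness of $X$ implies that $X$ is a Borel subset of $\R^N$. Since $T$ is Lipschitz on $X$, it is in particular locally Lipschitz. Since $h$ and each $h_j$ are $\beta$-H\"older on the compact set $X$, they are locally $\beta$-H\"older. The dimension/measure assumption $\mu\perp\mathcal{H}^{\beta k}$ is exactly the one required by Theorem~\ref{thm:predict_prob_ext}, and $\{h_1,\ldots,h_m\}$ is assumed to be a $2k$-interpolating family. Thus, Theorem~\ref{thm:predict_prob_ext} applies (with the same $\beta$, not $\beta=1$) and yields that for Lebesgue almost every $\alpha\in\R^m$, the perturbed observable $h_\alpha = h + \sum_{j=1}^{m}\alpha_j h_j$ is almost surely deterministically $k$-predictable with respect to $\mu$, in the sense of Definition~\ref{def:almost_sure_deterministic predictability}.

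Next I would upgrade this conclusion to almost sure $k$-predictability in the sense of Definition~\ref{def:SSOYpredictability}. This is precisely what Theorem~\ref{thm:almost_sure_predict_equiv}(ii) accomplishes, provided $X$ is compact and the transformation and observable are continuous. Compactness of $X$ is assumed; $T$ is continuous since it is Lipschitz; and each $h_\alpha$ is continuous since it is a finite linear combination of $\beta$-H\"older functions on a compact set, hence $\beta$-H\"older, hence continuous. Therefore, Theorem~\ref{thm:almost_sure_predict_equiv}(ii) converts almost sure deterministic $k$-predictability of $h_\alpha$ into almost sure $k$-predictability of $h_\alpha$, which is the conclusion of Theorem~\ref{thm:SSOY1_ext}.

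The main (and only) obstacle in this argument is not mathematical but notational: one must make sure that the ``almost every $\alpha$'' conclusion from Theorem~\ref{thm:predict_prob_ext} lines up with the pointwise-in-$\alpha$ hypothesis of Theorem~\ref{thm:almost_sure_predict_equiv}(ii). This is harmless because Theorem~\ref{thm:almost_sure_predict_equiv}(ii) holds for every continuous observable individually, so its application for each fixed $\alpha$ in the full-measure set produced by Theorem~\ref{thm:predict_prob_ext} preserves that set. In particular, no new null set needs to be removed, and one does not need to invoke Fubini. This completes the proof sketch of Theorem~\ref{thm:SSOY1_ext}.
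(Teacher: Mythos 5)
Your proof is correct and follows exactly the same two-step route the paper uses: apply Theorem~\ref{thm:predict_prob_ext} (with the given $\beta$, which is indeed the right reading — the paper's phrase ``applied for $\beta=1$'' appears to be a slip, since for $\beta<1$ the observables need not be Lipschitz) to get almost sure deterministic $k$-predictability, then upgrade via Theorem~\ref{thm:almost_sure_predict_equiv}(ii) using compactness of $X$ and continuity of $T$ and $h_\alpha$. Your remark that no Fubini argument or extra null set is needed, because Theorem~\ref{thm:almost_sure_predict_equiv}(ii) applies to each fixed $\alpha$ individually, is also correct.
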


Recall that by Theorem~\ref{thm:almost_sure_predict_equiv}, the almost sure predictability of $h_\alpha$ in Theorem~\ref{thm:SSOY1_ext} is equivalent to the almost sure deterministic predictability of $h_\alpha$.

The extended version of the prediction error estimate Theorem~\ref{thm:SSOY2} is divided into three parts, where the first two correspond to assertion~(ii), while the third one deals with assertion~(i). First, we observe that assertion~(ii) of Theorem~\ref{thm:SSOY2} holds for all deterministically predictable continuous observables on compact space with continuous dynamics.

\begin{thm}\label{thm:det pred implies zero error}
Let $X \subset \R^N$ be a compact set and $T \colon X \to X$ a continuous map. 
If a continuous observable $h \colon X \to \R$ is deterministically $k$-predictable for some $k \in \N$ and the prediction map $S$ is continuous, then for  every $\delta >0$ there exists $\eps_0 = \eps_0(h, k, \delta)> 0$ such that 
\begin{equation*}\label{eq:mu=0}
\sigma_{\eps}(\phi(x)) < \delta
\end{equation*}
for every $x \in X$ and every $0 < \eps < \eps_0$, where $\phi$ is the $k$-delay coordinate map corresponding to $h$, and $\sigma_{\eps}$ comes from Definition~{\rm\ref{def:dynamical predictability}}. In particular, the assertion holds if $\phi$ is injective.
\end{thm}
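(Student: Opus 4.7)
The plan is to exploit the uniform continuity of the prediction map $S$ on the compact model space $\phi(X)$ to bound both $\chi_\eps(y)$ and the integrand defining $\sigma_\eps(y)$ in terms of $\eps$, without any averaging tricks or dimension theory.

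First, by deterministic $k$-predictability together with Proposition~\ref{prop:determ_predict_equiv}, there is a continuous map $S \colon \phi(X) \to \phi(X)$ with $\phi \circ T = S \circ \phi$. Since $X$ is compact and $\phi$ is continuous, the set $\phi(X)$ is compact, so $S$ is in fact uniformly continuous on $\phi(X)$. Given $\delta > 0$, I would choose $\eps_0 > 0$ so that every pair $z_1, z_2 \in \phi(X)$ with $\|z_1 - z_2\| < \eps_0$ satisfies $\|S(z_1) - S(z_2)\| < \delta/3$.

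Now fix $x \in X$ and $0 < \eps < \eps_0$ for which $\sigma_\eps(\phi(x))$ is defined, and set $y = \phi(x)$. For every $x' \in \phi^{-1}(B(y, \eps))$ we have $\phi(x') \in B(y, \eps) \cap \phi(X)$, so
\[
\|\phi(T(x')) - \phi(T(x))\| \;=\; \|S(\phi(x')) - S(y)\| \;<\; \delta/3.
\]
Averaging this inequality over $\phi^{-1}(B(y,\eps))$ with respect to $\mu$ and using the definition of $\chi_\eps(y)$ gives $\|\chi_\eps(y) - \phi(T(x))\| \leq \delta/3$. The triangle inequality then yields $\|\phi(T(x')) - \chi_\eps(y)\| < 2\delta/3$ pointwise on $\phi^{-1}(B(y,\eps))$. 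Plugging this pointwise bound into the definition of $\sigma_\eps(y)$ produces $\sigma_\eps(y) \leq 2\delta/3 < \delta$, which is exactly the claim.

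The final sentence follows immediately: if $\phi$ is injective, then $\phi \colon X \to \phi(X)$ is a continuous bijection from a compact space onto a Hausdorff one, hence a homeomorphism, and $S = \phi \circ T \circ \phi^{-1}$ is automatically continuous, reducing to the main case. I do not expect a real obstacle; the argument is essentially compactness plus uniform continuity. The only mildly subtle point is the book-keeping around points $x \in X$ for which $\mu(\phi^{-1}(B(\phi(x), \eps))) = 0$ and hence $\sigma_\eps(\phi(x))$ is undefined, but the statement concerns only those $x$ where the quantity exists, so such points contribute nothing and can be set aside.
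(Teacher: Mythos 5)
Your proof is correct and follows essentially the same route as the paper's: both exploit the uniform continuity of $S$ on the compact set $\phi(X)$ together with the identity $S\circ\phi = \phi\circ T$ to bound $\phi\circ T$ pointwise on $\phi^{-1}(B(\phi(x),\eps))$, and then deduce the bound on $\chi_\eps$ and $\sigma_\eps$; the paper uses $\delta/2$ where you use $\delta/3$ (the paper quietly invokes the bias–variance inequality to get the sharper constant, but both yield $<\delta$), and the injectivity remark at the end is argued identically.
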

\begin{proof}
Recall that the prediction map $S\colon \phi(X) \to \phi(X)$ satisfies $S\circ \phi = \phi \circ T$. Therefore, for every $x \in X$ and $\eps > 0$ we have
\[
S(B(\phi(x), \eps)  \cap \phi(X)) = \phi \circ T ( \phi^{-1} (B (\phi(x), \eps) ) ).
\]
Moreover, as $S$ is uniformly continuous on the compact set $\phi(X)$, for every $\delta > 0$ there exists $\eps_0 > 0$ such that for every $0 < \eps < \eps_0$ and $x \in X$,
\[ \phi \circ T ( \phi^{-1} (B (\phi(x), \eps) )) = S(B(\phi(x), \eps)  \cap \phi(X)) \subset B\Big(S( \phi(x)), \frac{\delta}{2}\Big) = B\Big(\phi(Tx), \frac{\delta}{2}\Big). \]
By the definition of $\chi_\eps$ and $\sigma_\eps$ (see Definition~\ref{def:dynamical predictability}), this gives $\|\phi(Tx) - \chi_{\eps}(\phi(x))\| \leq \frac{\delta}{2}$ and $\sigma_{\eps}(\phi(x)) \leq  \delta$. 

If $\phi$ is injective on $X$, then the prediction map $S$ is given by $S = \phi \circ T \circ \phi^{-1}$ and hence it is continuous (as $\phi^{-1}$ is continuous the  inverse of a continuous map on a compact set).
\end{proof}

Theorem~\ref{thm:det pred implies zero error} implies the following corollary. 
We write $\sigma_{\alpha, \eps}$ for $\sigma_\eps$ from Definition~\ref{def:dynamical predictability} corresponding to an observable $h_\alpha$.

\begin{cor}[{\bf Prediction error estimate, assertion~(ii) -- extended version}{}{}]\label{cor:Zero prediction error_extended}
Let $X \subset \R^N$ be a compact set, $\mu$ a Borel probability measure on $X$ and $T\colon X \to X$ a Lipschitz map. Fix $k \in \N$ and $\beta \in (0,1]$ such that $\mH^{\beta k}(X \times X) = 0$. Let $\{h_1,\ldots, h_m\}$ be a $2k$-interpolating family on $X$ consisting of $\beta$-H\"older functions an let $h \colon X \to \R$ be a $\beta$-H\"older observable. Then for almost every $\alpha = (\alpha_1, \ldots, \alpha_m) \in \R^m$ and every $\delta > 0$, there exists $\eps_0 = \eps_0(\alpha, \delta) > 0$, such that for every $0 < \eps < \eps_0$,
\[ 
\{ x \in X : \sigma_{\alpha,\eps}(\phi_\alpha(x)) > \delta \} = \emptyset,
\]
where $\phi_\alpha$ is the $k$-delay coordinate map corresponding to the observable $h_\alpha = h + \sum_{j=1}^m \alpha_j h_j$.
\end{cor}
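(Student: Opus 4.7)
The plan is to derive this corollary as a direct synthesis of two results already established in the paper, namely Theorem~\ref{thm:predict_determ_ext} and Theorem~\ref{thm:det pred implies zero error}. The hypotheses of the corollary have been tailored exactly so that this combination goes through without any additional technical work.

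First, I would invoke Theorem~\ref{thm:predict_determ_ext}. The required hypotheses are all immediate consequences of what is assumed here: $T$ Lipschitz implies $T$ locally Lipschitz on $X$; $\beta$-Hölder functions on the compact set $X$ are in particular locally $\beta$-Hölder; $\{h_1,\dots,h_m\}$ is a $2k$-interpolating family of such functions; and $\mH^{\beta k}(X\times X) = 0$ is assumed. Theorem~\ref{thm:predict_determ_ext} then yields that for Lebesgue-almost every $\alpha \in \R^m$, the observable $h_\alpha = h + \sum_{j=1}^m \alpha_j h_j$ is deterministically $k$-predictable, and, because $X$ is compact, the associated prediction map $S_\alpha\colon \phi_\alpha(X)\to\phi_\alpha(X)$ is continuous.

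Next, I would fix any such $\alpha$ and apply Theorem~\ref{thm:det pred implies zero error} to the observable $h_\alpha$. Since $X$ is compact, $T$ is continuous (being Lipschitz), $h_\alpha$ is continuous (being a finite linear combination of Hölder functions), $h_\alpha$ is deterministically $k$-predictable, and the prediction map $S_\alpha$ is continuous, Theorem~\ref{thm:det pred implies zero error} provides, for every $\delta > 0$, a threshold $\eps_0 = \eps_0(h_\alpha, k, \delta) > 0$ such that $\sigma_{\alpha,\eps}(\phi_\alpha(x)) < \delta$ uniformly in $x \in X$ for every $0 < \eps < \eps_0$. Writing $\eps_0 = \eps_0(\alpha,\delta)$ to make the dependence on $\alpha$ explicit, this is exactly the statement that $\{ x \in X : \sigma_{\alpha,\eps}(\phi_\alpha(x)) > \delta \} = \emptyset$ for all $0 < \eps < \eps_0$.

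There is no real obstacle; the corollary is essentially the bridge between the deterministic prediction theorem (which produces a continuous prediction map for a prevalent set of observables) and Theorem~\ref{thm:det pred implies zero error} (which converts a continuous prediction map into a uniform bound on $\sigma_\eps$). The only point worth emphasising is the uniformity in $x$: it is not derived here but inherited from Theorem~\ref{thm:det pred implies zero error}, whose proof uses the uniform continuity of $S_\alpha$ on the compact set $\phi_\alpha(X)$.
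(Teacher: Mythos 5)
Your proposal is correct and follows exactly the same route as the paper's own proof: apply Theorem~\ref{thm:predict_determ_ext} to obtain, for Lebesgue-a.e.~$\alpha$, deterministic $k$-predictability of $h_\alpha$ with a continuous prediction map on the compact set $\phi_\alpha(X)$, and then apply Theorem~\ref{thm:det pred implies zero error} to conclude. The only cosmetic difference is that you spell out the hypothesis-checking, which the paper leaves implicit.
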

\begin{proof}
By Theorem~\ref{thm:predict_determ_ext}, for almost every $\alpha \in \R^m$ the observable $h_\alpha$ is deterministically $k$-predictable and the prediction map is continuous. Hence, the result follows from Theorem~\ref{thm:det pred implies zero error}.
\end{proof}

Note that Theorem~\ref{thm:det pred implies zero error} together with Corollary~\ref{cor:Zero prediction error_extended} provide an extended version of  assertion~(ii) of Theorem~\ref{thm:SSOY2}. In fact, to obtain Theorem~\ref{thm:SSOY2}\ref{it:SSOY2 deterministic}, it is enough to set $\beta=1$ in Corollary~\ref{cor:Zero prediction error_extended} and use the assumption $k > 2\lbdim X$ and \eqref{eq:product dim comp} to conclude $\mH^k(X \times X) = 0$.

Finally, we prove an extended version of the main part of the prediction error estimate, corresponding to assertion~(i) of Theorem~\ref{thm:SSOY2}. We write $\chi_{\alpha, \eps}$ and $\sigma_{\alpha, \eps}$, respectively, for $\chi_\eps$, $\sigma_\eps$ from Definition~\ref{def:dynamical predictability} corresponding to an observable $h_\alpha$. Note that Theorem~\ref{thm:SSOY2}\ref{it:SSOY2 error bound} follows immediately from Theorem~\ref{thm:SSOY2_ext} by taking $\beta=1$ and observing (as previously), that as a countable intersection of prevalent sets is prevalent, we can assume the number $k$ to be fixed.

\begin{thm}[{\bf Prediction error estimate, assertion~(i) -- extended version}{}]\label{thm:SSOY2_ext}
Let $X \subset \R^N$ be a compact set, $\mu$ a Borel probability measure on $X$ and $T\colon X \to X$ a Lipschitz map. Set $\overline{D} = \udim X$. Fix $k \in \N$ and $\beta \in (0,1]$ such that $\beta k > \overline{D}$ and let $\{h_1,\ldots, h_m\}$ be a $2k$-interpolating family on $X$ consisting of $\beta$-H\"older functions. Let $h \colon X \to \R$ be a $\beta$-H\"older observable. Then for Lebesgue-almost every $\alpha = (\alpha_1, \ldots, \alpha_m) \in \R^m$, for every $\delta, \theta > 0$ there exists $C = C(\alpha,\delta, \theta) > 0$ such that 
\begin{equation*}\label{eq:ii}
    \mu\left( \{ x \in X : \sigma_{\alpha, \eps}(\phi_\alpha(x)) > \delta  \} \right) \leq  C \eps^{\beta k - \overline{D} - \theta}
\end{equation*}
for every $\eps > 0$, where $\phi_\alpha$ is the $k$-delay coordinate map corresponding to the observable $h_\alpha = h + \sum_{j=1}^m \alpha_j h_j$.
\end{thm}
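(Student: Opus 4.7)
The plan is to translate the variance condition $\sigma_{\alpha,\eps}(\phi_\alpha(x)) > \delta$ into a pointwise condition on pairs of orbits, then combine the covering estimate of Lemma~\ref{lem: compact cover bound} with Fubini's theorem and a Borel--Cantelli argument along a dyadic sequence of scales. For $\alpha \in \R^m$ and $\eps, \delta' > 0$, define
\[
A_{\alpha,\eps,\delta'} = \bigl\{x \in X : \exists\, z \in X,\ \|\phi_\alpha(x) - \phi_\alpha(z)\| \le \eps \ \text{and}\ \|\phi_\alpha(Tx) - \phi_\alpha(Tz)\| > \delta'\bigr\}.
\]
My first step is to show the inclusion $\{x : \sigma_{\alpha,\eps}(\phi_\alpha(x)) > \delta\} \subseteq A_{\alpha,\eps,\delta/2}$. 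Writing $y = \phi_\alpha(x)$ and letting $\rho$ denote $\mu$ normalized to $\phi_\alpha^{-1}(B(y,\eps))$, the variance-as-double-integral identity
\[
\sigma_{\alpha,\eps}^2(y) = \tfrac{1}{2}\int \int \|\phi_\alpha(Tz_1) - \phi_\alpha(Tz_2)\|^2 \, d\rho(z_1)\, d\rho(z_2)
\]
together with $\sigma_{\alpha,\eps}^2(y) > \delta^2$ produces $z_1, z_2 \in \phi_\alpha^{-1}(B(y,\eps))$ with $\|\phi_\alpha(Tz_1) - \phi_\alpha(Tz_2)\| > \delta$; the triangle inequality at the point $\phi_\alpha(Tx)$ then forces $\|\phi_\alpha(Tx) - \phi_\alpha(Tz_i)\| > \delta/2$ for at least one $i$, and $\|\phi_\alpha(x) - \phi_\alpha(z_i)\| < \eps$ holds automatically since $\phi_\alpha(z_i) \in B(y,\eps)$ and $\phi_\alpha(x) = y$.

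My second step is to integrate $\mu(A_{\alpha,\eps,\delta'})$ over $\alpha$. For each $x \in X$, $\delta' > 0$ and $\eps \le \delta'/(3k)$, Lemma~\ref{lem: compact cover bound} applied with $Y = X$ (whose hypotheses hold globally because $X$ is compact, $T$ is Lipschitz and the $h_j$ are $\beta$-H\"older) yields
\[
\Leb\bigl(\{\alpha \in \overline{B_m}(0,1) : x \in A_{\alpha,\eps,\delta'}\}\bigr) \le D_{\delta'} \sum_i (\eps + \eps_i)^{\beta k}
\]
for any cover $\{B(y_i,\eps_i)\}$ of $X$ by balls centred in $X$. Taking the minimal cover by $N(X,\eps)$ balls of radius $\eps$ and invoking $\udim X = \overline D$ to get $N(X,\eps) \le \eps^{-\overline D - \theta'}$ for small $\eps$ (any $\theta' > 0$), I swap integrals by Tonelli's theorem (the sets involved are analytic, so universal measurability suffices; alternatively the outer-measure bounds from Lemma~\ref{lem: general cover bound} are enough) to obtain
\[
\int_{\overline{B_m}(0,1)} \mu(A_{\alpha,\eps,\delta'}) \, d\alpha \le C_{\delta',\theta'} \, \eps^{\beta k - \overline D - \theta'}
\]
for all sufficiently small $\eps$.

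My third step is to pass to a pointwise bound in $\alpha$ via Borel--Cantelli. Fix dyadic scales $\eps_n = 2^{-n}$ and countable sequences $\delta_j, \theta_i \searrow 0$; apply the previous display with $\theta' = \theta_i/2$. Markov's inequality yields
\[
\Leb\bigl(\{\alpha : \mu(A_{\alpha,\eps_n,\delta_j}) > \eps_n^{\beta k - \overline D - \theta_i}\}\bigr) \le C_{\delta_j,\theta_i} \, 2^{-n\theta_i/2},
\]
which is summable in $n$. By Borel--Cantelli, for $\Leb$-a.e.\ $\alpha$ and every pair $(i,j)$ the bound $\mu(A_{\alpha,\eps_n,\delta_j}) \le \eps_n^{\beta k - \overline D - \theta_i}$ holds for all sufficiently large $n$. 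Given arbitrary $\delta, \theta > 0$, I may assume $\theta < \beta k - \overline D$ (otherwise the claim is trivial since $\mu \le 1$); choose $\delta_j \le \delta/2$ and $\theta_i < \theta$; for each $\eps > 0$ pick $n$ with $\eps_n \le \eps < \eps_{n-1}$ and use the monotonicities $A_{\alpha,\eps,\delta/2} \subseteq A_{\alpha,\eps_{n-1},\delta_j}$ (increasing in $\eps$, decreasing in the threshold) to conclude the bound $\mu(\{x : \sigma_{\alpha,\eps}(\phi_\alpha(x)) > \delta\}) \le C \eps^{\beta k - \overline D - \theta}$ with $C = C(\alpha,\delta,\theta)$ absorbing the factor $2^{\beta k - \overline D - \theta}$ and the finitely many exceptional scales. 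The main obstacle I expect is the reduction in the first step: without translating the variance condition into a pair condition, the set $\{x : \sigma_{\alpha,\eps}(\phi_\alpha(x)) > \delta\}$ cannot be attacked by a covering argument on $X$ alone; once the pair reformulation is in place, everything else is a fairly standard Fubini / Borel--Cantelli routine built on top of Lemma~\ref{lem: compact cover bound}.
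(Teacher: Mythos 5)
Your proof is correct and follows essentially the same route as the paper: reduce the prediction-error condition to a pointwise pair condition, bound $\Leb(\{\alpha : x \in A_{\alpha,\eps,\delta'}\})$ via Lemma~\ref{lem: compact cover bound} and the box-counting cover, integrate in $x$ by Fubini, and pass from the mean bound to an almost-every-$\alpha$ bound by a Markov/Borel--Cantelli argument along dyadic scales, finishing by interpolation. The only cosmetic difference is your first step: the paper gets the inclusion $\{\sigma_{\alpha,\eps}(\phi_\alpha(x)) > \delta\} \subseteq A_{\alpha,\eps,\delta}$ directly by noting that the variance about the mean $\chi_{\alpha,\eps}$ is at most the mean squared deviation about $\phi_\alpha(Tx)$, which is $\leq \delta^2$ when $x \notin A_{\alpha,\eps,\delta}$; your double-integral variance identity plus triangle inequality yields the same inclusion with $\delta/2$, which is equally serviceable but slightly more elaborate.
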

\begin{proof}[Proof of Theorem \rm\ref{thm:SSOY2_ext}]
Obviously, we can assume $\theta \in (0, \beta k - \overline{D})$, so that $\beta k - \overline{D} - \theta > 0$. Fix $\delta > 0$ and $\theta \in (0, \beta k - \overline{D})$. As previously, it is sufficient to consider $\alpha \in \overline{B_m} (0,1)$. For $\eps> 0$ define a set
\[ A_{\eps} = \big\{ (x, \alpha) \in X \times \overline{B_m} (0,1) :  \underset{y \in X}{\exists}\ \|\phi_{\alpha}(x) - \phi_{\alpha}(y)\| \leq \eps  \text{ and }  \|\phi_\alpha(Tx) - \phi_\alpha(Ty)\| \geq \delta\big\}  \]
and its sections
\begin{align*}
A_{\eps, x} &= \big\{ \alpha \in  \overline{B_m} (0,1) :  \underset{y \in X}{\exists}\ \|\phi_{\alpha}(x) - \phi_{\alpha}(y)\| \leq \eps  \text{ and }  \|\phi_\alpha(Tx) - \phi_\alpha(Ty)\| \geq \delta \big\},\\
A_{\eps}^{\alpha} &= \big\{ x \in  X :  \underset{y \in X}{\exists}\ \|\phi_{\alpha}(x) - \phi_{\alpha}(y)\| \leq \eps  \text{ and }  \|\phi_\alpha(Tx) - \phi_\alpha(Ty)\| \geq \delta \big\}
\end{align*}
for $x \in X$ and $\alpha \in  \overline{B_m} (0,1)$, respectively.

Fix $j_0 \in \N$ such that $2^{-j_0} < \frac{\delta}{3k}$. 
By the assumption $\beta k > \overline{D}$ and the definition of the upper box-counting dimension, there exists $Q > 0$ such that for every $j \ge j_0$ one can find a positive integer
\[
N_j  \leq Q 2^{j(\overline{D} + \theta/2)}
\]
and points $y_1, \ldots, y_{N_j} \in X$, such that
\[ X \subset \bigcup \limits_{i=1}^{N_j} B(y_i, 2^{-j}).
\]
Applying Lemma~\ref{lem: compact cover bound} for $Y=X$ and $\eps = \eps_i = 2^{-j}$, we obtain
\[ \Leb(A_{2^{-j},x}) \leq  D_\delta N_j 2^{\beta k(1-j)}  \le   D_\delta Q 2^{\beta k} 2^{-j(\beta k - \overline{D} - \theta/2)} \] 
for every $x \in X$. Applying Fubini's theorem for the measure $\mu \otimes \Leb$ yields
\[ \mu \otimes \Leb(A_{2^{-j}}) \leq D_\delta Q 2^{\beta k} 2^{-j(\beta k - \overline{D} - \theta/2)} \]
and, consequently, 
\[ \Leb\big( \big\{ \alpha \in \overline{B_m} (0,1) :\mu (A_{2^{-j}}^{\alpha}) > C 2^{-j(\beta k - \overline{D} -\theta)} \big\} \big) \leq \frac{D_\delta Q 2^{\beta k}}{C}2^{-j\theta/2}  \]
for every $C>0$. Therefore,
\[ \sum \limits_{j=j_0}^\infty \Leb\big( \big\{ \alpha \in \overline{B_m} (0,1) :\mu \left(A_{2^{-j}}^{\alpha}\right) > C 2^{-j(\beta k - \overline{D} -\theta)} \big\} \big) \leq \frac{\tilde D_\delta}{C}\]
for some constant $\tilde D_\delta>0$ and every $C>0$. This implies 
\begin{equation*}\label{eq:full measure j limit}
\lim \limits_{C \to \infty} \Leb\big( \big\{ \alpha \in \overline{B_m} (0,1) : \underset{j \geq j_0}{\exists}\ \mu \left(A_{2^{-j}}^{\alpha}\right) > C 2^{-j(\beta k - \overline{D} -\theta)} \big\} \big) = 0.
\end{equation*}
Calculating the measure of the complementary set, we obtain
\begin{equation}\label{eq:full measure j}
\Leb\big( \big\{ \alpha \in \overline{B_m} (0,1) : \underset{C>0}{\exists}\ \underset{j \geq j_0}{\forall}\ \mu (A_{2^{-j}}^{\alpha}) \leq C 2^{-j(\beta k - \overline{D} -\theta)} \big\} \big) = 1.
\end{equation}

Take $\eps \in (0, 2^{-j_0}]$ and let $j \geq j_0$ be such that $2^{-(j+1)} < \eps \leq 2^{-j}$. Then $A_{\eps}^{\alpha} \subset A_{2^{-j}}^{\alpha}$, and hence the condition $\mu (A_{2^{-j}}^{\alpha}) \leq C 2^{-j(\beta k - \overline{D} -\theta)}$ implies 
\begin{align*}
\mu \left(A_{\eps}^{\alpha}\right) &\leq \mu(A_{2^{-j}}^{\alpha}) \le C 2^{-j(\beta k - \overline{D} -\theta)} = C 2^{\beta k-\overline{D} - \theta}  2^{-(j+1)(\beta k - \overline{D} -\theta)}\\ &\leq C 2^{\beta k - \overline{D} - \theta}  \eps^{\beta k - \overline{D} - \theta} = \tilde C \eps^{\beta k - \overline{D} - \theta}
\end{align*}
for $\tilde C =  2^{\beta k - \overline{D} - \theta}$. Therefore, \eqref{eq:full measure j} gives
\begin{equation}\label{eq:full measure eps}
\Leb\big( \big\{ \alpha \in \overline{B_m} (0,1) : \underset{C>0}{\exists}\ \underset{0 < \eps \leq 2^{-j_0}}{\forall} \ \mu \left(A_{\eps}^{\alpha}\right) \leq C \eps^{\beta k - \overline{D} - \theta} \big\} \big) = 1.
\end{equation}
Note that if $x\notin A_{\eps}^{\alpha}$ and $y \in X$ is such that $\|\phi_{\alpha}(x) - \phi_{\alpha}(y)\| \leq \eps$, then
$\|\phi_\alpha(Tx) - \phi_\alpha(Ty)\| < \delta$. Hence, by Definition~\ref{def:dynamical predictability},
\begin{equation*}\label{eq:notin A}
\text{if} \quad x\notin A_{\eps}^{\alpha}, \quad \text{then} \quad \big(\|\chi_{\alpha,\eps}(\phi_\alpha(x)) - \phi_\alpha(Tx)\| \leq \delta \ \text{and} \  \sigma_{\alpha,\eps}(\phi_\alpha(x)) \leq \delta\big).
\end{equation*}
Consequently, if $\sigma_{\alpha,\eps}(\phi_\alpha(x))>\delta$, then $x\in A_{\eps}^{\alpha}$. This together with \eqref{eq:full measure eps} implies that for almost every $\alpha \in \overline{B_m} (0,1)$ there exists $C >0$ such that 
\begin{equation}\label{eq:final}
\mu\left( \{ x \in X : \sigma_{\alpha,\eps}(\phi_\alpha(x)) > \delta  \} \right) \leq C \eps^{\beta k - \overline{D} - \theta}
\end{equation}
for every $\eps \in (0, 2^{-j_0}]$. Finally, note that to obtain \eqref{eq:final} for $\eps \ge 2^{-j_0}$, it is enough to enlarge the constant $C$ so that $C 2^{-j_0(\beta k - \overline{D} - \theta)} \ge 1$. Then \eqref{eq:final} for $\eps \ge 2^{-j_0}$ holds trivially, as $\mu(X) = 1$. This ends the proof of the theorem.
\end{proof}

\section{Counterexamples}\label{sec:ex}

In this section we present an example showing that any of the assumptions $\hdim \mu < k$ and $\uid (\mu) < k$ is in general not sufficient to guarantee the continuity of the map $S$ in Theorem~\ref{thm:predict_prob} and the prediction error estimates in Theorem~\ref{thm:SSOY2}. In both cases we prove that there exists an open set of Lipschitz observables for which the conclusion of the theorem fails. As any prevalent set is dense \cite[Fact~2']{Prevalence92}, this shows that the results are not true under the weaker assumptions.

Let
\[ A_0 = \bigcup \limits_{n = 1}^\infty \left\{ \frac{q}{2^n} : q \in \{1, 2, \ldots, 2^n - 1\} \right\}, \qquad  A = \{0,1\} \times A_0 \subset \R^2. \]
Note that $A_0$ is dense in the unit interval $[0,1]$ and $A$ is dense in the union
\[
X= \{0,1\} \times [0,1]
\]
of two copies of the unit interval. For the rest of this section we set the dynamics $T \colon X \to X$ as
\[ T(x,y) = \begin{cases} (x,y) &\text{ if } x = 0 \\ (x,1 - y) &\text{ if } x = 1 \end{cases} \]
and note that $T(A) = A$. The following proposition shows that for an open set of Lipschitz observables, the prediction map for $k = 1$ (whenever exists) cannot be continuous on the full image of $A$.

\begin{prop}\label{prop:atomic non-cont}
	There exists a non-empty open set $\mathcal{U} \subset\Lip(X)$, such that for every $h \in \mathcal{U}$ there exists $z \in A$ and a sequence $z_i \in A$ such that $h(z_i) \to h(z)$ as $i \to \infty$, but $h(Tz_i)$ does not converge to $h(Tz)$.
\end{prop}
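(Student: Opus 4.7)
The plan is to take $\mathcal U$ to be a small Lipschitz-norm ball around an explicit observable $h_0 \in \Lip(X)$. I will define
\[
h_0(0,y) = \tfrac14, \qquad h_0(1,y) = y \qquad (y \in [0,1]),
\]
which is easily seen to belong to $\Lip(X)$. Fix $\delta := \tfrac{1}{16}$ and set
\[
\mathcal U := \{h \in \Lip(X) : \|h - h_0\|_{\Lip(X)} < \delta\},
\]
a non-empty open subset of $\Lip(X)$. The specific value $\tfrac14$ in the definition of $h_0$ is engineered so that the ``collision value'' $f(y_0) = g(\tfrac12)$ (with $f, g$ introduced below), which would destroy the argument, is robustly excluded for every $h \in \mathcal U$.

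Fix $h \in \mathcal U$ and write $f(y) := h(0,y)$, $g(y) := h(1,y)$ for $y \in [0,1]$. Since $\|h - h_0\|_{\Lip(X)} < \delta$ simultaneously controls the uniform norm and the Lipschitz constant of $h - h_0$, I will deduce: $|f(y) - \tfrac14| < \delta$ and $|g(y) - y| < \delta$ for every $y \in [0,1]$; $g$ is strictly increasing, with $g(y_2) - g(y_1) \ge (1-\delta)(y_2 - y_1)$ for $y_1 < y_2$; and in particular $g(0) < \delta$, $g(1) > 1 - \delta$, $|g(\tfrac12) - \tfrac12| < \delta$.

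The desired $z$ and sequence $z_i$ will then be constructed as follows. Pick any $y_0 \in A_0$ and set $z := (0, y_0) \in A$. Since $T$ fixes the sheet $\{0\} \times [0,1]$, we have $Tz = z$ and thus $h(Tz) = h(z) = f(y_0)$. Because $f(y_0) \in (\tfrac14 - \delta, \tfrac14 + \delta) \subset (g(0), g(1))$, the intermediate value theorem yields a (unique) $y^* \in (0,1)$ with $g(y^*) = f(y_0)$; the gap $|f(y_0) - g(\tfrac12)| \ge \tfrac14 - 2\delta > 0$ combined with the injectivity of $g$ then forces $y^* \ne \tfrac12$, whence $g(1 - y^*) \ne g(y^*) = f(y_0)$. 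Finally, using density of $A_0$ in $[0,1]$, I will choose $y_i^* \in A_0$ with $y_i^* \to y^*$ and set $z_i := (1, y_i^*) \in A$; continuity of $h$ and $T$ then gives
\[
h(z_i) = g(y_i^*) \to g(y^*) = h(z), \qquad h(Tz_i) = g(1 - y_i^*) \to g(1 - y^*) \ne h(Tz),
\]
as required. The main delicacy of the argument lies in securing the strict separation $f(y_0) \ne g(\tfrac12)$ uniformly in $h \in \mathcal U$ and $y_0 \in A_0$, which is precisely what forces the choice $h_0(0,\cdot) \equiv \tfrac14$ instead of a more obvious constant such as $0$ or $\tfrac12$.
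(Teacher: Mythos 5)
Your proof is correct. Let me verify the key steps: with $\delta = 1/16$ and $h\in\mathcal U$, writing $f = h(0,\cdot)$, $g = h(1,\cdot)$, the bound $\|h-h_0\|_{\Lip(X)} < \delta$ gives $|f - \tfrac14| < \delta$ and $|g - \mathrm{id}| < \delta$ uniformly, and also $g(y_2) - g(y_1) \ge (1-\delta)(y_2-y_1)$ so that $g$ is a strictly increasing bijection onto $[g(0),g(1)]$. Then $f(y_0) \in (\tfrac3{16},\tfrac5{16})$ lies strictly between $g(0) < \tfrac1{16}$ and $g(1) > \tfrac{15}{16}$, so the IVT produces $y^*$ with $g(y^*) = f(y_0)$; since $g(\tfrac12) > \tfrac7{16} > f(y_0)$, injectivity of $g$ forces $y^* \ne \tfrac12$, whence $g(1-y^*) \ne g(y^*)$. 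Approximating $y^*$ by dyadics $y_i^* \in A_0$ gives $h(z_i) = g(y_i^*) \to g(y^*) = f(y_0) = h(z) = h(Tz)$ (since $T$ fixes the $\{0\}$-sheet), while $h(Tz_i) = g(1-y_i^*) \to g(1-y^*) \ne h(Tz)$. All steps hold.

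The route is genuinely different from the paper's. The paper chooses the symmetric observable $h_0(x,y) = y$ (identical on both sheets), takes $z$ on the $\{1\}$-sheet and the approximating sequence $z_i$ on the $\{0\}$-sheet, and relies on two openness facts: that the overlap condition $h(\{0\}\times I) = h(\{1\}\times J)$ persists for nearby $h$, and that the set of bi-Lipschitz embeddings is open in $\Lip(X)$ (proved in a footnote). You instead break the symmetry by setting $h_0 \equiv \tfrac14$ on the $\{0\}$-sheet and $h_0 = \mathrm{id}$ on the $\{1\}$-sheet, place $z$ on the $\{0\}$-sheet and $z_i$ on the $\{1\}$-sheet, and derive the needed injectivity and overlap directly from monotonicity of $g$ (a quantitative consequence of the Lipschitz bound) plus the intermediate value theorem. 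Your approach is more elementary and self-contained: it avoids the bi-Lipschitz-embedding openness lemma entirely and makes the radius $\delta$ of $\mathcal U$ and the separation from the fixed point of $y\mapsto 1-y$ completely explicit, at the cost of committing to a slightly less symmetric base observable.
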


\begin{proof}
	Let $h_0 \colon X \to \R$ be given by $h_0(x,y) = y$. Let $\mathcal{U}$ be an open neighbourhood of $h_0$ in $\Lip(X)$, such that for any $h \in \mathcal{U}$ there exist open non-empty intervals $I,J \subset [0,1]$ satisfying $h\left( \{ 0 \} \times I \right) =  h\left( \{ 1 \} \times J \right)$ and such that $h$ is a (bi-Lipschitz) homeomorphism onto its image\footnote{ It is easy to see that for a bounded set $X \subset \R^N$, the set of maps $h \in \Lip(X)$ which are bi-Lipschitz homeomorphisms onto their image is open in $\Lip(X)$. Indeed, for such map $h_0$, if $\| h - h_0\|_{\Lip(X)} \leq \eps$, then $|h(x) - h(y)| \geq |h_0(x) - h_0(y)| - |(h - h_0)(x) - (h - h_0)(y)| \geq \left( \|h_0^{-1}\|_{\Lip(h_0(X))} - \eps\right)|x - y|$, hence $h$ is bi-Lipschitz for small enough $\eps>0$, with $\|h\|_{\Lip(X)}, \|h^{-1}\|_{\Lip(X)}$ arbitrarily close to $\|h_0\|_{\Lip(X)}, \|h^{-1}_0\|_{\Lip(X)}$, respectively.}  on both sets $\{ 0 \} \times [0,1],\ \{ 1 \} \times [0,1]$. Fix $y \in J \cap A_0 \setminus \{ 1/2 \}$ and set $z = (1, y)$. Let $\tilde z \in \{ 0 \} \times I$ be such that $h(\tilde z) = h(z)$ and choose a sequence $z_i \in \{0\} \times \left( I \cap A_0 \right)$ satisfying $z_i  \to \tilde z$. We can find such $y$ and $z_i$ as $A_0$ is dense in $[0,1]$. Since $h$ is continuous, we have
	\[ h(z_i) \to h(\tilde z) = h(z), \]
	and as $T$ is the identity on $\{ 0 \} \times [0,1]$,
	\[ h(T z_i) = h(z_i) \to h(z). \]
	On the other hand, as $z \in \{ 1 \} \times \left( [0,1] \setminus \{ \frac{1}{2}\} \right)$, we have $Tz = (1, 1 -y) \neq (1, y) = z$. As $h$ is injective on $\{1\} \times [0,1]$, this gives $h(Tz) \neq h(z)$, yielding
	\[ h(T z_i) \to h(z) \neq h(Tz). \]
\end{proof}

The following corollary shows that the conditions $\hdim \mu<k$ and $\uid(\mu) < k$ are not sufficient to obtain the continuity of the prediction map $S$ in Theorem~\ref{thm:predict_prob}  for a prevalent set of Lipschitz observables. 

\begin{cor}\label{cor:no continuity with hdim}
There exist a compact set $X \subset \R^2$, a Borel probability measure $\mu$ on $X$  with $\hdim\mu = \idim(\mu) =  0$ and a Lipschitz map $T\colon X\rightarrow X$,  such that for a non-empty open set of Lipschitz observables $h \colon X \to \R$ and $k = 1$, the prediction map $S \colon\phi(X_h)\rightarrow \phi(X_h)$ $($provided it exists$)$ is not continuous on a set of positive $\phi_*\mu$-measure. Therefore, the continuity of $S$ on a set of full $\phi_*\mu$-measure does not hold for a prevalent set of Lipschitz observables $h\colon X \to \R$.
\end{cor}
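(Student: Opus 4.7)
The plan is to reuse the compact space $X = \{0,1\} \times [0,1]$, the Lipschitz involution $T$, and the countable dense $T$-invariant set $A = \{0,1\} \times A_0$ already constructed in this section. Let $(a_n)_{n \geq 1}$ be any enumeration of $A$ and set
\[
\mu = \sum_{n=1}^\infty 2^{-n}\delta_{a_n}.
\]
This atomic probability measure is supported on the countable set $A$, so $\hdim\mu = 0$ is immediate. For the information dimension, the crude bound $\mu(B(a_n,\eps)) \geq 2^{-n}$ yields
\[
\int \frac{\log\mu(B(x,\eps))}{\log\eps}\,d\mu(x) \leq \frac{\log 2}{|\log\eps|}\sum_{n=1}^\infty n\,2^{-n} = \frac{2\log 2}{|\log\eps|} \xrightarrow[\eps \to 0]{} 0,
\]
so $\uid(\mu) = 0$ (and hence also $\lid(\mu)=\idim(\mu)=0$).

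Next, take $\mathcal U \subset \Lip(X)$ to be the open set produced by Proposition~\ref{prop:atomic non-cont}. The structural observation driving the rest of the argument is that, since every point of $A$ carries strictly positive $\mu$-mass, any Borel set $X_h \subset X$ with $\mu(X_h) = 1$ must satisfy $A \subset X_h$. In particular, for $k=1$ (so $\phi = h$), whenever a prediction map $S \colon \phi(X_h) \to \phi(X_h)$ exists in the sense of Definition~\ref{def:almost_sure_deterministic predictability}, it is automatically defined on all of $\phi(A)$.

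Now fix $h \in \mathcal U$ and apply Proposition~\ref{prop:atomic non-cont} to obtain $z \in A$ and a sequence $z_i \in A$ with $h(z_i) \to h(z)$ but $h(Tz_i) \not\to h(Tz)$. Since $z, z_i \in A \subset X_h$, we have $\phi(z_i) \to \phi(z)$ in $\phi(X_h)$, while $S(\phi(z_i)) = \phi(Tz_i) = h(Tz_i)$ fails to converge to $S(\phi(z)) = h(Tz)$. Hence $S$ is discontinuous at $\phi(z)$, and since $\phi_*\mu(\{\phi(z)\}) \geq \mu(\{z\}) > 0$, discontinuity occurs on a set of positive $\phi_*\mu$-measure. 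The final sentence of the corollary then follows from the fact that any prevalent subset of a complete linear metric space is dense \cite[Fact~2']{Prevalence92}: a prevalent set of Lipschitz observables $h$ for which a continuous $S$ exists on a full $\phi_*\mu$-measure set would have to intersect the open nonempty set $\mathcal U$, contradicting what has just been established.

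The delicate point is the choice of $\mu$: one needs it simultaneously to be $0$-dimensional (so both $\hdim\mu < 1$ and $\uid(\mu) < 1$ hold), to assign positive push-forward mass to the discontinuity points produced by Proposition~\ref{prop:atomic non-cont}, and to be rigid enough that one cannot ``avoid'' the bad configurations by shrinking $X_h$. Atomicity on the dense $T$-invariant set $A$ reconciles all three requirements at once, which is the only real step; everything else is bookkeeping.
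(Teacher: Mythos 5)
Your proof is correct and follows essentially the same route as the paper: same space $X$, same map $T$, same dense $T$-invariant set $A$, a purely atomic measure giving positive mass to every point of $A$, Proposition~\ref{prop:atomic non-cont} to produce the discontinuity, and the observation that all atoms must lie in any full-measure Borel set. The only (harmless) differences are that you fix a concrete geometric measure $\sum 2^{-n}\delta_{a_n}$ and compute $\uid(\mu)=0$ directly, whereas the paper keeps $\mu$ generic and cites a theorem of R\'enyi for the information dimension.
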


\begin{proof}
Define $X$ and $T$ as above.     
Let $\mu$ be a Borel probability measure on $\R^2$, such that $\mu(A)=1$ and $\mu(\{ x\}) > 0$ for every $x \in A$. Note that $\hdim\mu = \idim(\mu) = 0$, as $\mu$ is purely atomic (the fact $\hdim\mu = 0$ follows directly from the definition of the Hausdorff dimension, while $\idim(\mu) =  0$ follows from \cite[Theorem 3]{RenyiDim}). Fix $k=1$ so that $\phi=h$. By Proposition~\ref{prop:atomic non-cont}, there exists a non-empty open set $\mathcal{U} \subset \Lip(X)$, such that for every $h \in \mathcal{U}$ there exists $z \in A$ and a sequence $z_i \in A$, $i\in \N$ such that $\phi(z_i) \to h(z)$, but $\phi(Tz_i)$ does not converge to $\phi(Tz)$. Assume for a contradiction that $S$ is well-defined and continuous on a set of full $\phi_*\mu$-measure. As $\mu(z_i)>0$, $S$ must be defined at $\phi(z_i)$ for all $i$. By \eqref{def:S}, we have $S(\phi(z_i))=\phi(Tz_i)$. Thus, $S(\phi(z_i))$ does not converge to $S(\phi(z))$, even though $\phi(z_i)$ converges to $h(z)$. 
\end{proof}
To show that Theorem \ref{thm:SSOY2} does not hold with $\hdim\mu$ or $\uid(\mu)$ replacing $\lbdim X$ or $\udim X$, we need to consider a more specific family of measures. Let $\beta_n,\ n = 1,2,\ldots$ be a positive sequence satisfying
\begin{equation}\label{eq:beta assumption} \sum \limits_{n=1}^\infty \beta_n = 1, \qquad \frac{\beta_n}{\beta_{n+1}} \leq M
\end{equation}
for some constant $M>0$.
Consider a purely atomic probability measure $\nu$ on $A$ given by
\[ \nu\Big(\Big\{\frac{q}{2^n}\Big\}\Big) = \frac{\beta_n}{2^{n-1}} \]
for $n \geq 1$ and odd $q \in \{0, \ldots, 2^n-1\}$. Note that with this definition, every dyadic rational $\frac{m}{2^n} \in (0,1)$ is an atom of $\nu$. Let
\[ \mu = \frac{1}{2}\left(\delta_0 \otimes \nu + \delta_1 \otimes \nu\right), \]
and note that $\supp\mu = X = \{0,1\} \times [0,1] \subset \R^2$. As $\mu$ is purely atomic, we have $\hdim\mu = 0$ and $\idim(\mu) = 0$ regardless of the choice of $\beta_n$. It is easy to see that $\mu$ is $T$-invariant.

\begin{prop}\label{prop:atomic prevalence}
	Fix the number of measurements $k = 1$. Let $\mu$ be the measure defined above with $\beta_n$ satisfying \eqref{eq:beta assumption}. Then there exists a non-empty open  set $\mathcal{U} \subset \Lip(X)$ and constants $c, \delta>0$, such that for every $h \in \mathcal{U}$ we have
	\[ \mu\left( \left\{ x \in X : \sigma_{ 2^{-n}}(\phi(x)) \geq \delta \right\} \right) \geq c\beta_n \]
	for every $n \in \N$ large enough. Consequently,
	\begin{enumerate}[$(a)$]
		\item assertion~\ref{it:SSOY2 deterministic} of Theorem~{\rm\ref{thm:SSOY2}} for the measure $\mu$ does not hold with $\lbdim X$ replaced by $\hdim \mu$ or $\uid(\mu)$,
		\item if $\beta_n = \frac{6}{\pi^2}n^{-2}$, then assertion~\ref{it:SSOY2 error bound} of Theorem~{\rm\ref{thm:SSOY2}} for the measure $\mu$ does not hold with $\udim X$ replaced by $\hdim \mu$ or $\uid(\mu)$.
	\end{enumerate}
\end{prop}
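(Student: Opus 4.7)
The plan is to exhibit, for each sufficiently large $n$, a collection of atoms of $\mu$ of total $\mu$-measure at least $c\beta_n$ on which the prediction error $\sigma_{2^{-n}}(\phi(x))$ is bounded below by a fixed constant $\delta>0$. Shrinking the open set $\mathcal U$ of Proposition~\ref{prop:atomic non-cont}, I may assume that for every $h \in \mathcal U$ the restrictions $L_i := h|_{\{i\}\times[0,1]}$ ($i=0,1$) are bi-Lipschitz onto their images with Lipschitz constants of themselves and their inverses in $[1/2, 3/2]$, and that a common closed sub-interval $J_0 \subset J$ of length $\geq c_0 > 0$ can be chosen, bounded away from $1/2$ and from the endpoints of $J$, such that the image $\tilde y_h(J_0) \subset I$ is compactly contained in $I$, where $\tilde y_h \colon J \to I$ is the bi-Lipschitz bijection defined by $L_0 \circ \tilde y_h = L_1|_J$. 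Setting $\delta_1 := \inf_{y \in J_0,\, h \in \mathcal U}|L_1(1-y) - L_1(y)| > 0$ gives the uniform separation needed.

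Fix $h \in \mathcal U$ and $z = (1, y)$ with $y \in J_0$. The preimage $\phi^{-1}(B(\phi(z), \eps))$ splits as $(\{1\} \times V_1) \cup (\{0\} \times V_0)$, with $V_1, V_0$ intervals of length $\Theta(\eps)$ centered at $y$ and $\tilde y_h(y)$ respectively. On $\{1\} \times V_1$ the map $\phi \circ T$ takes values within $O(\eps)$ of $L_1(1-y)$, and on $\{0\} \times V_0$ within $O(\eps)$ of $L_0(\tilde y_h(y)) = L_1(y)$. The parallel axis (between-within) decomposition of variance then gives
\[
\sigma_\eps^2(\phi(z)) \;\geq\; \frac{AB}{(A+B)^2}\bigl(|L_1(1-y) - L_1(y)| - O(\eps)\bigr)^2,
\]
where $A = \tfrac{1}{2}\nu(V_1)$, $B = \tfrac{1}{2}\nu(V_0)$. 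It thus suffices to produce, for each large $n$, at least $\asymp 2^n$ dyadic values $y = q/2^n \in J_0$ for which $\nu(V_1) \asymp \nu(V_0)$ uniformly in $h$ and $n$; for such $y$ the variance is then bounded below by $c\delta_1^2/4$ for $n$ large.

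Specialize to $\eps = 2^{-n}$ and $q$ odd with $q/2^n \in J_0$. Call $q$ \emph{good} if (i) the $2$-adic valuations of both $q+1$ and $q-1$ are at most $K_0$, and (ii) $V_0$ contains no dyadic rational of denomination at most $2^{n - K_1}$, where $K_0, K_1$ are universal constants to be chosen large. Condition (i) fails for a fraction $\leq 2^{1-K_0}$ of odd $q$ (the two residue classes $\pm 1 \pmod{2^{K_0}}$). For (ii), the number of denomination-$2^m$ dyadics in $I$ is at most $|I|\cdot 2^{m-1} + 1$, each blocking $O(1)$ values of $q$ via the bi-Lipschitz map $\tilde y_h$; summing over $m \le n - K_1$ yields a fraction $\leq O(|I|\cdot 2^{-K_1}/|J_0|) + O(n/2^n)$. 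Choosing $K_0, K_1$ large ensures at least half of odd $q \in Q_n := \{q \text{ odd}: q/2^n \in J_0\}$ are good, so $|Q_n^{\mathrm{good}}| \geq c_0\cdot 2^{n-2}$ for $n$ large. For good $q$: by (i), the only denomination-$<n$ atoms in $V_1$ are $(q\pm 1)/2^n$ of denominations $\geq n - K_0$, contributing mass $\leq 2^{K_0+1}M^{K_0}\beta_n/2^n$ via $\beta_m/\beta_{m+1}\le M$; by (ii), the analogous contribution to $V_0$ is controlled. The dominant contribution comes from denomination-$\geq n$ atoms densely packing $V_i$, giving $\asymp |V_i|\,\tau_n$ with $\tau_n := \sum_{m\ge n}\beta_m$. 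Combining upper and lower bounds (the latter using denomination-$\geq n + n_1$ atoms for a constant $n_1$, together with the inequality $\tau_{n+n_1} \geq \tau_n/(1 + n_1 M^{n_1})$ derived again from the hypothesis) yields $\nu(V_1), \nu(V_0) \asymp \tau_n/2^n$ uniformly. Hence $AB/(A+B)^2 \geq c > 0$ and $\sigma_{2^{-n}}(\phi(z)) \geq \delta := \delta_1\sqrt{c}/2$ for good $q$ and $n$ large. Summing atomic masses,
\[
\mu\bigl(\{(1, q/2^n) : q \in Q_n^{\mathrm{good}}\}\bigr) \;\geq\; c_0 \cdot 2^{n-2} \cdot \tfrac{1}{2} \cdot \beta_n/2^{n-1} \;\geq\; c\beta_n.
\]

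For (a), the non-empty "bad" set contradicts Theorem~\ref{thm:SSOY2}\ref{it:SSOY2 deterministic} with $\lbdim X$ replaced by $\hdim\mu$ or $\uid(\mu) = 0$, since $k = 1 > 0$ would force emptiness. For (b), with $\beta_n = 6/(\pi^2 n^2)$ the bound becomes $c/n^2 = c/(\log_2(1/\eps))^2$ at $\eps = 2^{-n}$, which decays sub-polynomially in $\eps$; this contradicts the polynomial upper bound $C\eps^{1-\theta}$ predicted by Theorem~\ref{thm:SSOY2}\ref{it:SSOY2 error bound} with $\udim X$ replaced by $\hdim\mu$ or $\uid(\mu) = 0$, for any $\theta \in (0, 1)$. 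The main technical obstacle is establishing the comparability $\nu(V_0) \asymp \nu(V_1)$: since $\nu$ is highly inhomogeneous with small-denomination atoms carrying disproportionately large mass, the avoidance argument (discarding $q$ whose $V_i$ contains a dominant small-denomination atom) is essential, as is the use of $\beta_m/\beta_{m+1} \leq M$ to ensure the tail $\tau_n$ cannot drop abruptly over $O(1)$-scale shifts of $n$.
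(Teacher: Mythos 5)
Your proposal is correct and follows the same global strategy as the paper's proof: fix a neighbourhood $\mathcal U$ of $h_0(x,y)=y$ in $\Lip(X)$ forcing a bi-Lipschitz, locally self-overlapping geometry; for each large $n$ isolate a family of roughly $2^n$ dyadic centres $q/2^n$ for which the $\eps$-preimage $\phi^{-1}(B(\cdot,\eps))$ ($\eps \approx 2^{-n}$) splits across the two fibres into intervals of comparable $\nu$-mass; conclude a uniform variance lower bound from the $T$-induced separation; and sum the atomic masses to get $\gtrsim \beta_n$.

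The differences are organizational rather than conceptual. The paper anchors the argument at points $z=(0,q/2^n)$ on the fixed fibre, whereas you take $z=(1,q/2^n)$; both work since the construction is symmetric. You replace the paper's packaged Lemma~\ref{lem:deviation bound} (the ``two clusters $\Rightarrow$ standard deviation bounded below'' statement) by an inline law-of-total-variance (between-group variance) computation $\sigma_\eps^2 \geq \tfrac{AB}{(A+B)^2}(\Delta - O(\eps))^2$, which is the same inequality stated differently. The paper derives the comparability $\nu(I') \asymp \nu(J')$ from a single clean avoidance condition fed into Lemma~\ref{lem:atomic interval}; your version is a hands-on re-derivation of essentially that lemma, split into two conditions --- (ii), which mirrors the paper's denomination-$\leq 2^{n-K}$ avoidance on the opposite fibre, plus an additional condition (i) on the $2$-adic valuations of $q\pm 1$. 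Condition (i) does extra work the paper's bookkeeping avoids: in the paper the odd-numerator centre $q/2^n$ automatically keeps atoms of denomination $\leq 2^{n-K}$ out of the centred interval (no integer $\ell$ solves $|2^K\ell - q| < 1/2$ for odd $q$), and the remaining atoms of intermediate denomination are absorbed by the $(1+2^K)$-factor in the proof of Lemma~\ref{lem:atomic interval} together with the ratio hypothesis $\beta_n/\beta_{n+1}\leq M$. Your explicit tracking of $(q\pm 1)/2^n$ is a valid substitute but slightly heavier; by contrast, the paper's modular two-lemma structure gives cleaner uniform constants. The concluding steps for (a) and (b) coincide with the paper's argument.
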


For the proof we will need two technical lemmas.

\begin{lem}\label{lem:atomic interval}
	Fix $K \in \N$. Then there exists $C = C(K)>0$ such that for every $n \geq 3$,
	\[ \frac{|J|}{C}  \sum_{k=n}^\infty \beta_k \leq \nu(J) \leq C|J| \sum_{k=n}^\infty \beta_k \]
	for every open interval $J \subset [0,1]$ of length at least $2^{-n}$, such that $J \cap \{\ell/2^{n-K},\ \ell \in \N\} = \emptyset$.
\end{lem}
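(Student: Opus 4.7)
The plan is to analyze $\nu$ one dyadic level at a time: the level-$k$ atoms are the $2^{k-1}$ points $q/2^k$ with $q$ odd, evenly spaced by $2^{-(k-1)}$ and carrying total mass $\beta_k$. Writing $B_n = \sum_{k=n}^\infty \beta_k$, the growth hypothesis $\beta_n \leq M\beta_{n+1}$ gives $\beta_n \leq M B_{n+1}$, so $B_n = \beta_n + B_{n+1} \leq (1+M) B_{n+1}$; iterating, $B_n$ and $B_{n+j}$ differ only by a factor depending on $j$ and $M$. The hypothesis that $J$ avoids the grid $\{\ell/2^{n-K}\}$ is exactly the statement that $J$ contains no level-$k$ atom with $k \leq n-K$, because all such atoms sit on that grid; moreover, $J$ is trapped inside a single interval $I = (\ell/2^{n-K}, (\ell+1)/2^{n-K})$, yielding $2^{-n} \leq |J| \leq 2^{K-n}$. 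I will give the argument for $n \geq K$; the finitely many remaining cases $3 \leq n < K$ are absorbed into the constant $C = C(K)$.

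For the upper bound I would compute $\nu(I)$ exactly. For each $k \geq n-K+1$, the interval $I$ contains precisely $2^{k-n+K-1}$ level-$k$ atoms (they are the odd integers in $(\ell s, (\ell+1)s)$ with $s = 2^{k-n+K}$), so
\[
\nu(J) \;\leq\; \nu(I) \;=\; \sum_{k \geq n-K+1} 2^{k-n+K-1}\cdot \frac{\beta_k}{2^{k-1}} \;=\; 2^{K-n}\, B_{n-K+1} \;\leq\; 2^{K-n}(1+M)^{K-1} B_n.
\]
Combined with $2^{-n} \leq |J|$, this gives the upper bound $\nu(J) \leq 2^K(1+M)^{K-1}\, |J|\, B_n$.

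For the lower bound I would discard the intermediate levels $n-K < k < n+2$, where $N_k(J)$ may be zero, and sum only over $k \geq n+2$. At such a level, $|J| \cdot 2^k \geq 2^{k-n} \geq 4$, and a routine count of odd integers in an open interval of length $L \geq 4$ gives at least $L/2 - 1 \geq L/4$ such integers, hence $N_k(J) \geq |J|\cdot 2^{k-2}$. Summing,
\[
\nu(J) \;\geq\; \sum_{k \geq n+2} |J| \cdot 2^{k-2} \cdot \frac{\beta_k}{2^{k-1}} \;=\; \frac{|J|}{2}\, B_{n+2} \;\geq\; \frac{|J|\, B_n}{2(1+M)^2}.
\]
The main obstacle, and precisely where the hypothesis $\beta_n/\beta_{n+1} \leq M$ is indispensable, lies at these discarded intermediate levels: $J$ may contain no atoms at levels $k \in \{n-K+1,\ldots,n+1\}$, so a naive level-by-level lower bound fails; the growth condition rescues the argument by making $B_{n+2}$ comparable to $B_n$, so dropping those levels costs only a bounded constant. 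Taking $C = \max\!\big(2^K(1+M)^{K-1},\, 2(1+M)^2\big)$, adjusted upward to cover the small-$n$ cases, completes the proof.
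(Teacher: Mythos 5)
Your proof is correct and follows the paper's overall strategy: count the atoms of $\nu$ level by level and compare the tail sums $B_n=\sum_{k\geq n}\beta_k$ at shifted indices using $\beta_k\leq M\beta_{k+1}$. Two points of divergence are worth recording. For the upper bound you pass to the enclosing grid cell $I\supset J$ and compute $\nu(I)$ exactly, whereas the paper directly bounds the number of level-$m$ atoms in $J$ by $|J|2^{m-1}+1$; both routes start the sum at level $n-K+1$ via the grid-avoidance hypothesis and give constants of the same order, so this is a cosmetic difference. For the lower bound, however, your treatment is a genuine repair rather than a stylistic choice: the paper sums from $m=n$ and rewrites $(|J|2^{m-1}-2)2^{-(m-1)}$ as $|J|(1-2^{-m+2})$, but the correct identity is $|J|-2^{-m+2}$, and when $|J|$ is near its minimal value $2^{-n}$ the terms with $m\in\{n,n+1\}$ are negative and can dominate the tail, so the paper's chain of inequalities does not close as written. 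Discarding the low levels and paying the bounded factor through $B_{n+2}\geq B_n/(1+M)^2$, exactly as you do, is the right way to finish, and you correctly flag this as the precise place where the ratio hypothesis is indispensable. The only small imprecision is your aside about $3\leq n<K$: there the set $I$ you name is not a genuine grid cell inside $[0,1]$, but your lower bound already works verbatim for all $n\geq 3$, and the upper bound follows from the crude estimate $\nu(J)\leq 1\leq 2^{K-1}(1+M)^{K-1}|J|B_n$ (using $|J|\geq 2^{-(K-1)}$ and $B_n\geq(1+M)^{-(K-1)}$), so absorbing these cases into $C(K)$ is indeed legitimate.
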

\begin{proof}
	Note that for given $m \in \N$,
	\begin{equation*}\label{eq:q cardinality bounds} |J|2^{m-1} - 2 \leq \#\left\{ q \in \{0, \ldots, 2^{m}-1\} \setminus 2\N : \frac{q}{2^m} \in J \right\} \leq |J|2^{m-1} + 1.
	\end{equation*}
	Therefore, by the definition of $\nu$, for $n \geq 3$ we have
	\[ \nu(J) \geq \sum \limits_{m = n}^\infty (|J|2^{m-1} - 2)2^{-(m-1)}\beta_m = \sum \limits_{m = n}^\infty |J|(1 - 2^{-m+2})\beta_m  \geq \frac{1}{2}|J| \sum \limits_{m = n}^\infty \beta_m. \]
	Similarly, $|J| \geq 2^{-n}$ implies $2^{-(m-1)} \leq 2^K|J|$ for $m \geq n-K+1$, so by the assumption on $|J|$,
	\begin{align*}  
	\nu(J) &\leq \sum \limits_{m = n - K + 1}^\infty (|J|2^{m-1} + 1)2^{-(m-1)}\beta_m\\ &= \sum \limits_{m = n - K + 1}^\infty (|J| + 2^{-(m-1)})\beta_m \leq (1 + 2^K)|J| \sum \limits_{m = n - K + 1}^\infty\beta_m.
	\end{align*}
	Moreover, by \eqref{eq:beta assumption},
	\begin{align*}  
	\frac{\sum_{m = n - K + 1}^\infty\beta_m}{\sum_{m = n}^\infty \beta_m} &\leq 1 + \frac{\sum_{m = n - K + 1}^{n-1}\beta_m}{\sum_{m = n}^\infty \beta_m}\\ &= 1 + \frac{(K-1)\max\{ \beta_{n - K + 1}, \ldots, \beta_{n-1} \}}{\beta_n} \leq 1 + (K-1)M^{K-1}.
	\end{align*}  
	Combining the above estimates completes the proof.
\end{proof}

\begin{lem}\label{lem:deviation bound}
	Let $\gamma>0$ and $p \in (0,1)$. Then there exists $\delta = \delta(\gamma,p)>0$ such that if $\eta$ is a probability measure on $\R^N$ supported on $A \cup A'$ with $\dist(\overline{\conv A}, \overline{\conv A'}) \geq \gamma$ and $\eta(A) \in (p, 1-p)$, then the standard deviation of $\eta$ is at least $\delta$, i.e.
	\[ \bigg( \int \Big\| x  - \int y\, d\eta(y) \Big\|^2 d\eta(x) \bigg)^{\frac{1}{2}} \geq \delta. \]
\end{lem}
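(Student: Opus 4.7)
The plan is to split $\eta$ into its restrictions to $A$ and $A'$ and show that the bimodal structure forces a lower bound on the variance, coming from the separation of the two conditional means. I would proceed as follows.

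First, observe that since $\overline{\conv A}$ and $\overline{\conv A'}$ have positive distance $\gamma>0$, they are disjoint, so $A \cap A' = \emptyset$ (as $A\subset\overline{\conv A}$ and $A'\subset\overline{\conv A'}$). Together with $\supp\eta \subset A \cup A'$, this gives $\eta(A)+\eta(A')=1$. Set $q = \eta(A) \in (p, 1-p)$ and $1-q = \eta(A') \in (p,1-p)$, and define the conditional means
\[
m_A = \frac{1}{q}\int_A y\,d\eta(y), \qquad m_{A'} = \frac{1}{1-q}\int_{A'} y\,d\eta(y).
\]
These are limits of convex combinations of points of $A$ and $A'$, so $m_A \in \overline{\conv A}$ and $m_{A'} \in \overline{\conv A'}$; in particular $\|m_A - m_{A'}\| \geq \gamma$. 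Writing $\bar y = \int y\, d\eta(y)$, the linearity of the integral gives
\[
\bar y = q\, m_A + (1-q)\, m_{A'}.
\]

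Next, I would invoke the standard identity that for any probability measure $\mu$ with mean $m$ and any $c \in \R^N$, $\int \|x-c\|^2 d\mu(x) = \int \|x-m\|^2 d\mu(x) + \|m-c\|^2 \geq \|m-c\|^2$. Applied with $c=\bar y$ to the normalized restrictions of $\eta$ to $A$ and $A'$ respectively, this yields
\[
\int_A \|x-\bar y\|^2 d\eta(x) \geq q\,\|m_A - \bar y\|^2, \qquad \int_{A'} \|x-\bar y\|^2 d\eta(x) \geq (1-q)\,\|m_{A'} - \bar y\|^2.
\]
Since $m_A - \bar y = (1-q)(m_A - m_{A'})$ and $m_{A'}-\bar y = -q(m_A-m_{A'})$, summing the two bounds gives
\[
\int \|x - \bar y\|^2 d\eta(x) \geq \bigl(q(1-q)^2 + (1-q)q^2\bigr)\|m_A - m_{A'}\|^2 = q(1-q)\|m_A - m_{A'}\|^2.
\]

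Finally, using $\|m_A - m_{A'}\| \geq \gamma$ together with the observation that the function $q \mapsto q(1-q)$ attains its minimum on the closed interval $[p, 1-p]$ at the endpoints (with value $p(1-p)$), I conclude
\[
\int \|x - \bar y\|^2 d\eta(x) \geq p(1-p)\gamma^2,
\]
so setting $\delta = \gamma\sqrt{p(1-p)} > 0$ completes the proof. There is no serious obstacle here; the only subtle points are the disjointness of $A$ and $A'$ (handled via the separation hypothesis) and the uniform lower bound on $q(1-q)$ (handled via the assumption that $q$ stays away from $0$ and $1$).
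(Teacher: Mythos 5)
Your argument is correct, and it rests on the same key observation as the paper's proof: decompose $\eta$ into its restrictions to $A$ and $A'$, note that the conditional means $m_A\in\overline{\conv A}$ and $m_{A'}\in\overline{\conv A'}$ are at distance $\geq\gamma$, and exploit the fact that $q=\eta(A)$ is bounded away from $0$ and $1$. Where you differ is in the finish: you apply the Pythagorean identity (bias--variance decomposition) to each conditional piece, which is essentially the law of total variance, yielding $\Var(\eta)\geq q(1-q)\|m_A-m_{A'}\|^2$ and hence $\delta=\gamma\sqrt{p(1-p)}$. The paper instead first applies Cauchy--Schwarz to replace the $L^2$ quantity by the $L^1$ quantity $\int\|x-\bar y\|\,d\eta$, then uses Jensen/triangle inequality on each piece to get $\eta(A)\|m_A-\bar y\|+\eta(A')\|m_{A'}-\bar y\|\geq\min\{p\gamma,(1-p)\gamma\}$. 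Your route is slightly cleaner, avoids the lossy $L^2\to L^1$ step, and gives a better constant ($\gamma\sqrt{p(1-p)}>p\gamma$ when $p<1/2$); the paper's route is more elementary in that it only uses the triangle inequality. Either constant is adequate for the application, since only positivity of $\delta$ matters downstream.
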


\begin{proof}
	Let 
	\[
	E = \frac{1}{\eta(A)} \int \limits_{A} y \,d\eta(y), \qquad E' = \frac{1}{\eta(A')} \int \limits_{A'} y \,d\eta(y).
	\]
	Then $E \in \overline{\conv A}$, $E' \in \overline{\conv A'}$, hence $\|E - E'\| \geq \gamma$. As $\int y\, d\eta(y) = \eta(A)E + \eta(A')E'$, we see that 
	\[ \bigg\|\int y \, d\eta(y) - E\bigg\|, \bigg\|\int y \,d\eta(y) - E'\bigg\| \geq \delta \]
	for
	\[   
	\delta =\min\{p\gamma, (1-p)\gamma\}.
	\]
	Therefore,
	\[
	\begin{split} \bigg( \int \Big\| x  - \int y\; d\eta(y) \Big\|^2 d\eta(x) \bigg)^{\frac{1}{2}} &\geq \int \Big\| x  - \int y\; d\eta(y) \Big\| d\eta(x)\\
	& = \int\limits_A \Big\| x  - \int y \,d\eta(y) \Big\| d\eta(x) + \int\limits_{A'} \Big\| x  - \int y \,d\eta(y) \Big\| d\eta(x) \\
	& \geq \bigg\| \int\limits_A \Big( x  - \int y\; d\eta(y) \Big) d\eta(x)  \bigg\| +  \bigg\|\int\limits_{A'} \Big( x  - \int y \, d\eta(y) \Big)  d\eta(x) \bigg\| \\
	& = \eta(A)\bigg\| E - \int y\, d\eta(y)\bigg\| + \eta(A')\bigg\| E' - \int y\, d\eta(y)\bigg\|\geq \delta.
	\end{split}
	\]
\end{proof}

\begin{proof}[Proof of Proposition~\rm\ref{prop:atomic prevalence}]
	Let $h_0 \colon X \to \R$ be given by $h_0(x,y) = y$ and let $J = (\frac{1}{8}, \frac{1}{4})$. Then there exists an open neighbourhood $\mathcal{U}$ of $h_0$ in $\Lip(X)$ and a constant $\gamma>0$, such that for any $h \in \mathcal{U}$ the following hold:
	\begin{itemize}
		\item there exists an open interval $I \subset (0, \frac{3}{8})$ such that $h\left( \{ 0 \} \times I \right) = h\left( \{ 1 \} \times J \right)$,
		\item the length of $I$ is at least $\frac{1}{16}$,
		\item $h$ is a bi-Lipschitz homeomorphism onto its image on $\{ 0 \} \times [0,1]$ and $\{ 1 \} \times [0,1]$, with the Lipschitz constants of $h$ and its inverse being at most $\sqrt{2}$ on those sets,
		\item $\dist \left(h\left( \{ 0 \} \times I \right), h(T \left( \{ 1 \} \times J \right) ) \right) = \dist \left(h\left( \{ 0 \} \times I \right), h(\{ 1 \} \times (\frac{3}{4}, \frac{7}{8})) \right) \geq \gamma$.
	\end{itemize}
	Fix $h \in \mathcal{U}$. For $n \in \N$ define
	\[ Q_n = \left\{ q \in \{0, \ldots, 2^n-1\} \setminus 2\N : \left(\frac{q}{2^n} - 2^{-(n+1)}, \frac{q}{2^n} + 2^{-(n+1)} \right)  \subset I \right\} \]
	and note that for $n \geq 7$,
	\begin{equation}\label{eq: Q_n card} \# Q_n \geq 2^{n-1}|I| - 2 \geq 2^{n-5} - 2 \geq 2^{n-6}.
	\end{equation}
	Fix $K=8$ and set 
	\[g \colon [0,1] \to h\left( \{1\} \times [0,1] \right),\qquad g(y) = h(1,y).
	\]
	As $h\left( \{ 0 \} \times I \right) = h\left( \{ 1 \} \times J \right)$, we can define
	\[
	\begin{split}Y_n = \bigg\{ q \in Q_n: \ &\text{the sets } g^{-1} \left(h \left(\{ 0 \} \times \left(\frac{q}{2^n} -2^{-(n+1)},  \frac{q}{2^n} + 2^{-(n+1)}\right)\right)\right)\\ &\text{and }\left(\frac{q}{2^n} - 2^{-(n+1)}, \frac{q}{2^n} + 2^{-(n+1)} \right)\\ &\text{do not contain points } \frac{\ell}{2^{n-K}},\ \ell \in \{0, \ldots, 2^{n-K}-1\} \bigg\}.
	\end{split}\]
	Note that both families 
	\[ \left\{ \left(\frac{q}{2^n} - 2^{-(n+1)}, \frac{q}{2^n} + 2^{-(n+1)} \right) : q \in \{0, \ldots, 2^{n}-1 \} \setminus 2\N \right\} \]
	and 
	\[ \left\{ g^{-1} h \left(\{ 0 \} \times \left(\frac{q}{2^n} -2^{-(n+1)},  \frac{q}{2^n} + 2^{-(n+1)}\right)\right) : q \in \{0, \ldots, 2^{n}-1 \} \setminus 2\N \right\} \]
	consist of pairwise disjoint sets, hence each point $\frac{\ell}{2^{n-K}}$ can belong to at most one set in each of these families. Therefore,
	\[ \#\left(Q_n \setminus Y_n \right) \leq 2^{n-K+1},\]
	so combining with \eqref{eq: Q_n card} and recalling $K = 8$ we obtain
	\[ \#Y_n \geq 2^{n-7}.\]
	Hence, by the definition of $\mu$,
	\begin{equation}\label{eq:Y_n measure} \mu\Big(\{ 0 \} \times \frac{1}{2^n}Y_n\Big) = \frac{1}{2}\nu\Big( \frac{1}{2^n}Y_n
	\Big) \geq \frac{\beta_n}{2^7}.
	\end{equation}
	Recall that for $k = 1$ we have $\phi = h$ for the $1$-delay coordinate map $\phi$ corresponding to $h$. Fix point $y_0 = \phi(0, \frac{q}{2^n}) = h(0, \frac{q}{2^n}) \in \R$ with $q \in Y_n$. Let $\eps = 2^{-(n+2)}$ and recall that $g^{-1}$ and $h|_{\{0\} \times [0,1]}$ are bi-Lipschitz homeomorphism with Lipschitz constants at most $\sqrt{2}$. Thus
	\[ B(y_0, \eps) \subset h \left(\{ 0 \} \times \left(\frac{q}{2^n} -2^{-(n+1)},  \frac{q}{2^n} + 2^{-(n+1)}\right)\right). \] Using this together the definition of $Y_n$, we see that
	\[\phi^{-1}(B(y_0,\eps)) = \{0\} \times I' \cup \{1 \} \times J',\]
	where $I'$ and $J'$ are open intervals satisfying the following properties;
	\begin{itemize}
		\item $I' \subset I$ and $J' \subset J$,
		\item $2^{-(n+2)} \leq |I'|,|J'| \leq 2^{-n}$,
		\item $I'$ and $J'$ do not contain points of the form $\frac{\ell}{2^{n-K}},\ \ell \in \{0, \ldots, 2^{n-K}-1\}$.
	\end{itemize}
	Lemma~\ref{lem:atomic interval} implies that there exists constant $C$ (depending only on $K$, which is fixed) such that
	\begin{equation}\label{eq:nu comp} \frac{1}{C} \leq \frac{\nu(I')}{\nu(J')} \leq C.
	\end{equation}
	As
	\[ \phi \circ T ( \phi^{-1}(B(y_0,\eps)) ) = B(y_0, \eps) \cup h(T(\{ 1 \} \times J')), \]
	we see that the measure $(\phi \circ T)_* ( \mu|_{\phi^{-1}(B(y_0, \eps))} )$ is concentrated on two disjoint intervals, located at a distance of at least $\gamma$. Furthermore, \eqref{eq:nu comp} implies that the two intervals have measures comparable up to a constant $C$. This allows us to apply Lemma~\ref{lem:deviation bound} to the measure $\eta = (\phi_h \circ T)_* ( \mu|_{\phi^{-1}(B(y_0, \eps))})$ and conclude that there exists $\delta = \delta(\gamma, K) = \delta(\mathcal{U})$ (independent of $n$), such that
	\[  \sigma_{2^{-(n+1)}}(\phi(x)) \geq \delta\ \text{ for every } x = \left(0, \frac{q}{2^n}\right) \text{ with } q \in Y_n. \]
	Recalling \eqref{eq:Y_n measure} yields
	\[ \mu\left( \left\{ x \in X : \sigma_{2^{-(n+1)}}(\phi(x)) \geq \delta \right\} \right) \geq \frac{\beta_n}{2^7} \]
	for $n \geq 7$ and completes the proof.
\end{proof}

\section{Open questions} \label{sec:open_prob}
Below we list some open questions, remaining for a further investigation.
\begin{enumerate}
	\item Does assertion~\ref{it:SSOY2 error bound} of Theorem~\ref{thm:SSOY2} hold with $\theta = 0$? This would prove the exact upper bound given in the SSOY prediction error conjecture.
	\item Are the lower bounds in assertions~(i)--(ii) of the SSOY prediction error conjecture true?
	\item If we assume measure $\mu$ to be ergodic, is it possible to improve Theorem~\ref{thm:SSOY2} by replacing the upper box-counting dimension of $X$ with the Hausdorff or information dimension of measure $\mu$? Note that Theorem~\ref{thm:SSOY1} holds for the Hausdorff dimension, and in the ergodic case also for the information dimension (see \cite{BGS22}).
	\item Is it possible to construct examples like the ones in Proposition~\ref{prop:atomic prevalence} within the class of natural measures on attractors for smooth diffeomorphisms?
	\item What is the regularity of the prediction map $S$ from Theorem~\ref{thm:predict_determ}? In particular, is it (typically) H\"older continuous? What is its regularity in the probabilistic setting of Theorem~\ref{thm:predict_prob}? Is it pointwise H\"older at almost every point (i.e.~satisfying $\|S(x) - S(y)\| \leq C_x \|x- y\|^\beta$ for $x,y$ from a set of full measure, with $C_x$ depending on $x$)?
	\item What can be said about the regularity of the inverse map $\phi^{-1}$ in the case when the delay coordinate map $\phi$ is invertible or almost surely invertible? Is $\phi^{-1}$ H\"older continuous or pointwise H\"older continuous?
	
	\item Can we obtain better bounds on the prediction error or stronger regularity properties of the prediction map if we replace the upper box-counting dimension by the Assouad dimension? Note that this holds in the context of linear embeddings, see \cite[Chapter~9]{Rob11} (however, it can be difficult to obtain bounds on the Assouad dimension for particular non-linear examples).

\end{enumerate}

\section*{Acknowledgements}We are grateful to Boris Solomyak for valuable suggestions on developing the example from Section~\ref{sec:ex} and to \"{O}zge Canl{\i} for bringing \cite{Abarbanel_book} to our attention. We thank Bal\'azs B\'ar\'any, J\'er\^ome Buzzi and K\'aroly Simon for useful discussions. We are grateful to the referee for helpful remarks. KB and A\'S were partially supported by the National Science Centre (Poland) grant 2019/33/N/ST1/01882. YG was partially supported by the National Science Centre (Poland) grant 2020/39/B/ST1/02329.

\bibliography{universal_bib} 

\newcommand{\etalchar}[1]{$^{#1}$}
\def\cprime{$'$} \def\cprime{$'$}
\begin{thebibliography}{LVQMAV99}

\bibitem[Aba96]{Abarbanel_book}
Henry D.~I. Abarbanel.
\newblock {\em Analysis of observed chaotic data}.
\newblock Institute for Nonlinear Science. Springer-Verlag, New York, 1996.

\bibitem[ASY97]{AlligoodSauerYorkeBook}
Kathleen~T. Alligood, Tim~D. Sauer, and James~A. Yorke.
\newblock {\em Chaos. An introduction to dynamical systems}.
\newblock Textbooks in Mathematical Sciences. Springer-Verlag, New York, 1997.

\bibitem[BGS20]{BGS20}
Krzysztof Bara\'{n}ski, Yonatan Gutman, and Adam \'{S}piewak.
\newblock A probabilistic {T}akens theorem.
\newblock {\em Nonlinearity}, 33(9):4940--4966, 2020.

\bibitem[BGS22]{BGS22}
Krzysztof Bara\'{n}ski, Yonatan Gutman, and Adam \'{S}piewak.
\newblock On the {S}hroer-{S}auer-{O}tt-{Y}orke predictability conjecture for
  time-delay embeddings.
\newblock {\em Comm. Math. Phys.}, 391(2):609--641, 2022.

\bibitem[BGS24]{BGS-predict_few_meas}
Krzysztof Bara\'{n}ski, Yonatan Gutman, and Adam \'{S}piewak.
\newblock Predicting dynamical systems with too few time-delay measurements:
  error estimates.
\newblock preprint arXiv:2401.15712, 2024.

\bibitem[BL00]{BYbook00}
Yoav Benyamini and Joram Lindenstrauss.
\newblock {\em Geometric nonlinear functional analysis. {V}ol. 1}, volume~48 of
  {\em American Mathematical Society Colloquium Publications}.
\newblock American Mathematical Society, Providence, RI, 2000.

\bibitem[B{\l}a22]{Dance}
Michalina B{\l}a\.{z}kiewicz.
\newblock Evaluation of geometric attractor structure and recurrence analysis
  in professional dancers.
\newblock {\em Entropy}, 24(9):Paper No. 1310, 19, 2022.

\bibitem[BR23]{BaghReddy23}
Niraj Bagh and M.~Ramasubba Reddy.
\newblock Investigation of the dynamical behavior of brain activities during
  rest and motor imagery movements.
\newblock {\em Biomed. Signal Process. Control}, 79:104153, 2023.

\bibitem[BT11]{BT11}
Henk Broer and Floris Takens.
\newblock {\em Dynamical systems and chaos}, volume 172 of {\em Applied
  Mathematical Sciences}.
\newblock Springer, New York, 2011.

\bibitem[Cab00]{CaballeroEmbed}
Victoria Caballero.
\newblock On an embedding theorem.
\newblock {\em Acta Math. Hungar.}, 88(4):269--278, 2000.

\bibitem[CG19]{OzgeSerkan19}
{\"O}zge Canli and Serkan G{\"u}nel.
\newblock Detecting cluster synchronization in chaotic dynamic networks via
  information theoretic measures.
\newblock In {\em 2019 42nd International Conference on Telecommunications and
  Signal Processing (TSP)}, pages 521--524, 2019.

\bibitem[Chr73]{Christensen73}
Jens Peter~Reus Christensen.
\newblock Measure theoretic zero sets in infinite dimensional spaces and
  applications to differentiability of {L}ipschitz mappings.
\newblock {\em Publ. D\'{e}p. Math. (Lyon)}, 10(2):29--39, 1973.

\bibitem[{\v{C}}P88]{CenysPyragas88}
Antanas {\v{C}}enys and Kestutis Pyragas.
\newblock Estimation of the number of degrees of freedom from chaotic time
  series.
\newblock {\em Phys. Lett. A}, 129(4):227--230, 1988.

\bibitem[DLSR23]{Dlotko_et_al}
Pawe{\l} D{\l}otko, Micha{\l} Lipi\'{n}ski, and Justyna Signerska-Rynkowska.
\newblock Testing topological conjugacy of time series.
\newblock preprint arXiv:2301.06753, 2023.

\bibitem[Fal14]{falconer2014fractal}
Kenneth Falconer.
\newblock {\em Fractal geometry}.
\newblock John Wiley \& Sons, Ltd., Chichester, third edition, 2014.
\newblock Mathematical foundations and applications.

\bibitem[Fed69]{Federer-book}
Herbert Federer.
\newblock {\em Geometric measure theory}.
\newblock Die Grundlehren der mathematischen Wissenschaften, Band 153.
  Springer-Verlag New York, Inc., New York, 1969.

\bibitem[FS87]{FarmerSidorowich87}
J.~Doyne Farmer and John~J. Sidorowich.
\newblock Predicting chaotic time series.
\newblock {\em Phys. Rev. Lett.}, 59:845--848, 1987.

\bibitem[GQS18]{GQS18}
Yonatan Gutman, Yixiao Qiao, and G\'{a}bor Szab\'{o}.
\newblock The embedding problem in topological dynamics and {T}akens' theorem.
\newblock {\em Nonlinearity}, 31(2):597--620, 2018.

\bibitem[GS00]{poly-interpolation}
Mariano Gasca and Thomas Sauer.
\newblock Polynomial interpolation in several variables.
\newblock {\em Adv. Comput. Math.}, 12(4):377--410, 2000.

\bibitem[Gut16]{Gut16}
Yonatan Gutman.
\newblock Takens' embedding theorem with a continuous observable.
\newblock In {\em Ergodic theory}, pages 134--141. De Gruyter, Berlin, 2016.

\bibitem[GVL13]{MatrixComputations}
Gene~H. Golub and Charles~F. Van~Loan.
\newblock {\em Matrix computations}.
\newblock Johns Hopkins Studies in the Mathematical Sciences. Johns Hopkins
  University Press, Baltimore, MD, 4th edition, 2013.

\bibitem[HBS15]{HBS15}
Franz Hamilton, Tyrus Berry, and Timothy Sauer.
\newblock Predicting chaotic time series with a partial model.
\newblock {\em Phys. Rev. E}, 92:010902, 2015.

\bibitem[HGLS05]{hgls05distinguishing}
Chih-Hao Hsieh, Sarah~M. Glaser, Andrew~J. Lucas, and George Sugihara.
\newblock Distinguishing random environmental fluctuations from ecological
  catastrophes for the {N}orth {P}acific {O}cean.
\newblock {\em Nature}, 435(7040):336--340, 2005.

\bibitem[HOP97]{Oscilators}
Nikolaus Hinrichs, Markus Oestreich, and Karl Popp.
\newblock Dynamics of oscillators with impact and friction.
\newblock {\em Chaos Solitons Fractals}, 8(4):535--558, 1997.

\bibitem[HP97]{SeaClutter}
Simon Haykin and Sadasivan Puthusserypady.
\newblock Chaotic dynamics of sea clutter.
\newblock {\em Chaos}, 7(4):777--802, 1997.

\bibitem[HSY92]{Prevalence92}
Brian~R. Hunt, Tim Sauer, and James~A. Yorke.
\newblock Prevalence: a translation-invariant ``almost every'' on
  infinite-dimensional spaces.
\newblock {\em Bull. Amer. Math. Soc. (N.S.)}, 27(2):217--238, 1992.

\bibitem[Huk06]{Huke-report}
Jeremy~P. Huke.
\newblock Embedding nonlinear dynamical systems: A guide to {T}akens' theorem.
\newblock Manchester Institute for Mathematical Sciences EPrint 2006.26, 2006.

\bibitem[HW41]{HW41}
Witold Hurewicz and Henry Wallman.
\newblock {\em Dimension {T}heory}.
\newblock Princeton Mathematical Series, v. 4. Princeton University Press,
  Princeton, N. J., 1941.

\bibitem[JL94]{Rainfall94}
Amithirigala~W. Jayawardena and Feizhou Lai.
\newblock Analysis and prediction of chaos in rainfall and stream flow time
  series.
\newblock {\em J. Hydrol.}, 153(1):23--52, 1994.

\bibitem[Kat21]{kato2021jaworski}
Hisao Kato.
\newblock Jaworski-type embedding theorems of one-sided dynamical systems.
\newblock {\em Fund. Math.}, 253(2):205--218, 2021.

\bibitem[Kat23]{Kato23}
Hisao Kato.
\newblock Takens-type reconstruction theorems of one-sided dynamical systems.
\newblock {\em Nonlinearity}, 36(3):1571--1592, 2023.

\bibitem[KBA92]{KBA92}
Matthew~B. Kennel, Reggie Brown, and Henry D.~I. Abarbanel.
\newblock Determining embedding dimension for phase-space reconstruction using
  a geometrical construction.
\newblock {\em Phys. Rev. A}, 45:3403--3411, 1992.

\bibitem[KMB15]{KrakovkaFNN15}
Anna Krakovsk{\'a}, Krist{\'i}na Mezeiov{\'a}, and Hana Bud{\'a}{\v{c}}ov{\'a}.
\newblock Use of false nearest neighbours for selecting variables and embedding
  parameters for state space reconstruction.
\newblock {\em J. Complex Syst.}, 2015, 2015.

\bibitem[KY90]{KY90}
Eric~J. Kostelich and James~A. Yorke.
\newblock Noise reduction: finding the simplest dynamical system consistent
  with the data.
\newblock {\em Phys. D}, 41(2):183--196, 1990.

\bibitem[Liu10]{Liu10}
Zonghua Liu.
\newblock Chaotic time series analysis.
\newblock {\em Math. Probl. Eng.}, 2010.

\bibitem[LPS91]{LiebertPawelzikSchuster91}
W.~Liebert, Klaus Pawelzik, and Heinz~Georg Schuster.
\newblock Optimal embeddings of chaotic attractors from topological
  considerations.
\newblock {\em Europhys. Lett.}, 14(6):521, 1991.

\bibitem[LVQMAV99]{Epilepsy}
Michel Le~Van~Quyen, Jacques Martinerie, Claude Adam, and Francisco~J. Varela.
\newblock Nonlinear analyses of interictal {EEG} map the brain interdependences
  in human focal epilepsy.
\newblock {\em Phys. D}, 127(3):250 -- 266, 1999.

\bibitem[LY85]{LY85I}
Fran{\c{c}}ois Ledrappier and Lai-Sang Young.
\newblock The metric entropy of diffeomorphisms. {I}. {C}haracterization of
  measures satisfying {P}esin's entropy formula.
\newblock {\em Ann. of Math. (2)}, 122(3):509--539, 1985.

\bibitem[Mat95]{mattila}
Pertti Mattila.
\newblock {\em Geometry of sets and measures in Euclidean spaces}, volume~44 of
  {\em Cambridge Studies in Advanced Mathematics}.
\newblock Cambridge University Press, Cambridge, 1995.

\bibitem[MRCA14]{BioclimaticBuildings}
Ramsés~H. Mena, Francisco Rodríguez, María Castilla, and Manuel~R. Arahal.
\newblock A prediction model based on neural networks for the energy
  consumption of a bioclimatic building.
\newblock {\em Energy Build.}, 82:142--155, 2014.

\bibitem[MS04]{McSharrySmith04}
Patrick~E. McSharry and Leonard~A. Smith.
\newblock Consistent nonlinear dynamics: identifying model inadequacy.
\newblock {\em Phys. D}, 192(1):1--22, 2004.

\bibitem[Nar85]{narasimhan1985analysis}
Raghavan Narasimhan.
\newblock {\em Analysis on real and complex manifolds}, volume~35 of {\em
  North-Holland Mathematical Library}.
\newblock North-Holland Publishing Co., Amsterdam, 1985.
\newblock Reprint of the 1973 edition.

\bibitem[Noa91]{N91}
Lyle Noakes.
\newblock The {T}akens embedding theorem.
\newblock {\em Internat. J. Bifur. Chaos Appl. Sci. Engrg.}, 1(4):867--872,
  1991.

\bibitem[NV20]{NV18}
Raymundo Navarrete and Divakar Viswanath.
\newblock Prevalence of delay embeddings with a fixed observation function.
\newblock {\em Phys. D}, 414:132697, 15, 2020.

\bibitem[OL98]{OrtegaLouis98}
Guillermo~J. Ortega and Enrique Louis.
\newblock Smoothness implies determinism in time series: A measure based
  approach.
\newblock {\em Phys. Rev. Lett.}, 81:4345--4348, 1998.

\bibitem[ORS16]{ORS16}
Eric~J. Olson, James~C. Robinson, and Nicholas Sharples.
\newblock Generalised {C}antor sets and the dimension of products.
\newblock {\em Math. Proc. Cambridge Philos. Soc.}, 160(1):51--75, 2016.

\bibitem[OY03]{TakensPlato03}
William Ott and James~A. Yorke.
\newblock Learning about reality from observation.
\newblock {\em SIAM J. Appl. Dyn. Syst.}, 2(3):297--322, 2003.

\bibitem[PCFS80]{PCFS80}
Norman~H. Packard, James~P. Crutchfield, J.~Doyne Farmer, and Robert~S. Shaw.
\newblock Geometry from a time series.
\newblock {\em Phys. Rev. Lett.}, 45:712--716, 1980.

\bibitem[R{\'{e}}n59]{RenyiDim}
Alfréd R{\'{e}}nyi.
\newblock On the dimension and entropy of probability distributions.
\newblock {\em Acta Math. Acad. Sci. Hungar.}, 10:193--215 (unbound insert),
  1959.

\bibitem[RM97]{NoisyFNN97}
Carl Rhodes and Manfred Morari.
\newblock False-nearest-neighbors algorithm and noise-corrupted time series.
\newblock {\em Phys. Rev. E}, 55:6162--6170, 1997.

\bibitem[Rob05]{Rob05}
James~C. Robinson.
\newblock A topological delay embedding theorem for infinite-dimensional
  dynamical systems.
\newblock {\em Nonlinearity}, 18(5):2135--2143, 2005.

\bibitem[Rob11]{Rob11}
James~C. Robinson.
\newblock {\em Dimensions, embeddings, and attractors}, volume 186 of {\em
  Cambridge Tracts in Mathematics}.
\newblock Cambridge University Press, Cambridge, 2011.

\bibitem[RSTA95]{CoupledSynchornization95}
Nikolai~F. Rulkov, Mikhail~M. Sushchik, Lev~S. Tsimring, and Henry D.~I.
  Abarbanel.
\newblock Generalized synchronization of chaos in directionally coupled chaotic
  systems.
\newblock {\em Phys. Rev. E}, 51:980--994, 1995.

\bibitem[SBDH97]{StarkEmbedSurvey}
Jaroslav Stark, David~S. Broomhead, Michael~Evan Davies, and Jeremy~P. Huke.
\newblock Takens embedding theorems for forced and stochastic systems.
\newblock In {\em Proceedings of the {S}econd {W}orld {C}ongress of {N}onlinear
  {A}nalysts, {P}art 8 ({A}thens, 1996)}, volume~30, pages 5303--5314, 1997.

\bibitem[SBDH03]{StarkStochEmbed}
Jaroslav Stark, David~S. Broomhead, Michael~Evan Davies, and Jeremy~P. Huke.
\newblock Delay embeddings for forced systems. {II}. {S}tochastic forcing.
\newblock {\em J. Nonlinear Sci.}, 13(6):519--577, 2003.

\bibitem[SFG{\etalchar{+}}22]{CarbonStock22}
Xiaotian Sun, Wei Fang, Xiangyun Gao, Haizhong An, Siyao Liu, and Tao Wu.
\newblock Complex causalities between the carbon market and the stock markets
  for energy intensive industries in {C}hina.
\newblock {\em Int. Rev. Econ. Financ.}, 78:404--417, 2022.

\bibitem[SKY{\etalchar{+}}18]{SolarCycle18}
Volkan Sarp, Ali Kilcik, Vasyl~B. Yurchyshyn, Jean~Pierre Rozelot, and Atila
  Ozguc.
\newblock Prediction of solar cycle 25: a non-linear approach.
\newblock {\em Mon. Not. Roy. Astron. Soc.}, 481(3):2981--2985, 2018.

\bibitem[SM90]{sm90nonlinear}
George Sugihara and Robert May.
\newblock Nonlinear forecasting as a way of distinguishing chaos from
  measurement error in time series.
\newblock {\em Nature}, 344(6268):734--741, 1990.

\bibitem[SPSZ20]{Stavroglou2020}
Stavros~K. Stavroglou, Athanasios~A. Pantelous, H.~Eugene Stanley, and
  Konstantin~M. Zuev.
\newblock Unveiling causal interactions in complex systems.
\newblock {\em Proc. Natl. Acad. Sci. USA.}, 117(14):7599--7605, 2020.

\bibitem[SSOY98]{SSOY98}
Christian~G. Schroer, Tim Sauer, Edward Ott, and James~A. Yorke.
\newblock Predicting chaos most of the time from embeddings with
  self-intersections.
\newblock {\em Phys. Rev. Lett.}, 80:1410--1413, 1998.

\bibitem[Sta99]{1999delay}
Jaroslav Stark.
\newblock Delay embeddings for forced systems. {I}. {D}eterministic forcing.
\newblock {\em J. Nonlinear Sci.}, 9(3):255--332, 1999.

\bibitem[SYC91]{SYC91}
Timothy~D. Sauer, James~A. Yorke, and Martin Casdagli.
\newblock Embedology.
\newblock {\em J. Statist. Phys.}, 65(3-4):579--616, 1991.

\bibitem[Tak81]{T81}
Floris Takens.
\newblock Detecting strange attractors in turbulence.
\newblock In {\em Dynamical systems and turbulence, {W}arwick 1980}, volume 898
  of {\em Lecture Notes in Math.}, pages 366--381. Springer, Berlin-New York,
  1981.

\bibitem[Tak02]{T02}
Floris Takens.
\newblock The reconstruction theorem for endomorphisms.
\newblock {\em Bull. Braz. Math. Soc. (N.S.)}, 33(2):231--262, 2002.

\bibitem[Vos03]{Voss03}
Henning~U. Voss.
\newblock Synchronization of reconstructed dynamical systems.
\newblock {\em Chaos}, 13(1):327--334, 2003.

\bibitem[Wal82]{W82}
Peter Walters.
\newblock {\em An introduction to ergodic theory}, volume~79 of {\em Graduate
  Texts in Mathematics}.
\newblock Springer-Verlag, New York, 1982.

\bibitem[WCL09]{Streamflow07}
Conglin Wu, K.~W. Chau, and Yok-Sheung Li.
\newblock Predicting monthly streamflow using data-driven models coupled with
  data-preprocessing techniques.
\newblock {\em Water Resour. Res.}, 45(8), 2009.

\bibitem[Yor69]{y69}
James~A. Yorke.
\newblock Periods of periodic solutions and the {L}ipschitz constant.
\newblock {\em Proc. Amer. Math. Soc.}, 22:509--512, 1969.

\bibitem[ZLZ21]{Robots}
Tie Zhang, Xiaohong Liang, and Yanbiao Zou.
\newblock Modeling of robot's low-speed motion nonlinear dynamics based on
  phase space reconstruction neural network.
\newblock {\em J. Comput. Nonlinear Dyn.}, 16(9), 2021.
\newblock 091003.

\end{thebibliography}
\bibliographystyle{alpha}

\end{document}